\titleformat*{\section}{\large\bfseries}
\theoremstyle{plain}
\newtheorem{thm}{Theorem}[section]
\newtheorem{prop}[thm]{Proposition}
\newtheorem{lem}[thm]{Lemma}
\newtheorem{cor}[thm]{Corollary}
\theoremstyle{definition}
\newtheorem{defin}[thm]{Definition}
\newtheorem{rem}[thm]{Remark}
\newtheorem{ex}[thm]{Example}
\newcommand{\A}{{\mathcal{A}}}
\newcommand{\ac}{{\operatorname{ac}}}
\newcommand{\C}{{\mathcal{C}}}
\newcommand{\cf}{\mathit{cf}}
\newcommand{\Ch}{{\operatorname{Ch}}}
\newcommand{\coker}{{\operatorname{coker}}}
\newcommand{\D}{{\mathcal{D}}}
\newcommand{\End}{{\operatorname{End}}}
\newcommand{\Ext}{{\operatorname{Ext}}}
\newcommand{\F}{{\mathcal{F}}}
\newcommand{\FF}{{\mathbb{F}}}
\newcommand{\GP}{{\operatorname{GP}}}
\newcommand{\Ho}{{\operatorname{Ho}}}
\newcommand{\Hom}{{\operatorname{Hom}}}
\newcommand{\id}{{\operatorname{id}}}
\newcommand{\im}{{\operatorname{im}}}
\newcommand{\ind}{{\operatorname{ind}}}
\newcommand{\Ind}{{\operatorname{Ind}}}
\newcommand{\Inj}{{\operatorname{Inj}}}
\newcommand{\Mod}{{\operatorname{Mod}}}
\newcommand{\op}{{\operatorname{op}}}
\newcommand{\Proj}{{\operatorname{Proj}}}
\newcommand{\Rep}{{\operatorname{Rep}}}
\newcommand{\T}{{\mathcal{T}}}
\newcommand{\XX}{{\mathbb{X}}}
\newcommand{\Z}{{\mathbb{Z}}}
\begin{document}
\setcounter{section}{-1}


{\Large\bf Model categories and pro-$p$ Iwahori-Hecke modules}\\[6ex]
{\sc Nicolas Dupr\'e, Jan Kohlhaase}\\[1ex]
\footnotetext{{\it 2010 Mathematics Subject Classification}. Primary 20C08, 18G25, 18N40.}


{\bf Abstract.} Let $G$ denote a possibly discrete topological group admitting an open subgroup $I$ which is pro-$p$. If $H$ denotes the corresponding Hecke algebra over a field $k$ of characteristic $p$ then we study the adjunction between $H$-modules and $k$-linear smooth $G$-representations in terms of various model structures. If $H$ is a Gorenstein ring we single out a full subcategory of smooth $G$-representations which is equivalent to the category of all Gorenstein projective $H$-modules via the functor of $I$-invariants. This applies to groups of rational points of split connected reductive groups over finite and over non-archimedean local fields, thus generalizing a theorem of Cabanes. Moreover, we show that the Gorenstein projective model structure on the category of $H$-modules admits a right transfer. On the homotopy level the right derived functor of $I$-invariants then admits a right inverse and becomes an equivalence when restricted to a suitable subcategory.


\tableofcontents


\section{Introduction}\label{section_0}

%
%


Let $p$ denote a prime number and let $G$ denote a topological group admitting an open subgroup $I$ which is pro-$p$. If $k$ is a field of characteristic $p$ we let $\XX=\ind_I^G(k)$ denote the $G$-representation compactly induced from the trivial $I$-representation $k$. The opposite endomorphism ring $H=\End_G(\XX)^\op$ can also be realized as the double coset algebra $H=k[I\backslash G/I]$ with respect to the usual convolution product.\\

We let $\Rep_k^\infty(G)$ denote the category of $k$-linear $G$-representations which are {\it smooth} in the sense that the stabilizer of any vector is open in $G$. If $G$ is discrete then $\Rep^\infty_k(G)$ is the category of all $k$-linear $G$-representations. Let $\Mod(H)$ denote the category of left $H$-modules. There is an adjunction
\begin{equation}\label{FU_adjunction}
F:\Mod(H)\rightleftarrows\Rep^\infty_k(G):U
\end{equation}
given by $FM=\XX\otimes_HM$ and $UV=V^I\cong\Hom_G(\XX,V)$. The study of $G$-representations in terms of this adjunction plays a prominent role in various situations. If $G$ is the group of rational points of a split connected reductive group over a finite field of characteristic $p$, for example, and if $I$ denotes a $p$-Sylow subgroup of $G$ then this strategy has a long history.\\

Our motivating example, however, concerns the case that $G$ is the group of rational points of a split connected reductive group over a non-archimedean local field of residue characteristic $p$. If $I$ denotes a pro-$p$ Iwahori subgroup of $G$ in the sense of Bruhat-Tits theory then the algebra $H$ is known as the corresponding {\it pro-$p$ Iwahori-Hecke algebra}. In this situation the category $\Rep^\infty_k(G)$ plays a central role in the mod-$p$ local Langlands program. Although the category of $H$-modules is rather well-understood the properties of the category $\Rep^\infty_k(G)$ remain obscure. In fact, the behavior of the adjunction (\ref{FU_adjunction}) is not sufficiently clear beyond $G=\mathrm{GL}_2(\mathbb{Q}_p)$ and a few related cases (cf.\ the results of Ollivier in \cite{Oll09}).\\

Since $k$ is of characteristic $p$ the functor $U$ is generally not exact. It is therefore  natural to study the situation from a homological point of view. For reductive groups over non-archimedean local fields important work in this direction has been done by Ollivier, Schneider and Koziol (cf.\ \cite{OS14}, \cite{OS18} and \cite{Koz}, for example). Moreover, Schneider showed that the adjunction (\ref{FU_adjunction}) induces an equivalence on a derived level if $H$ is replaced by a suitable dg-variant $H_\bullet$ (cf.\ \cite{Sch}, Theorem 9). Unfortunately, the structure of this algebra seems very hard to understand although some recent progress has been made by Ollivier and Schneider (cf.\ \cite{OS19} and \cite{OS21}).\\

In this article we take a different approach. Our aim is to shed some light on the nature of the adjunction (\ref{FU_adjunction}) using the language and methods of {\it model categories}. We stay in the very general situation described at the beginning. For the finer results in \S\ref{section_4} and \S\ref{section_5} we will assume, however, that the ring $H$ is {\it Gorenstein}. In Example \ref{H_Gorenstein} we give a list of important situations to which this assumption applies. Due to results of Tinberg, Ollivier and Schneider this includes groups of rational points of split connected reductive groups over finite and non-archimedean local fields.\\

In \S\ref{section_1} we give a brief exposition of the general theory of model categories. Any model category $\C$ has an associated {\it homotopy category} $\Ho(\C)$ that may be thought of as a generalized derived category. The model structures re\-le\-vant for our purposes will either be constructed using {\it cotorsion pairs} in the sense of Hovey (cf.\ Theorem \ref{cotorsion_pairs}) or via the {\it left/right transfer} along an adjunction. We emphasize that cotorsion pairs differ from the {\it torsion theories} advertised in \cite{OS18}. In rather special situations, however, there is a correspondence between localizing cotorsion pairs and torsion theories in the homotopy category (cf.\ \cite{SS11}, Proposition 3.3). In Proposition \ref{path_objects} we give a criterion for the existence of the right transfer based on {\it path objects}.\\

Let $\Ch(H)$ and $\Ch(G)$ denote the categories of unbounded chain complexes over $\Mod(H)$ and $\Rep^\infty_k(G)$, respectively. By working termwise there is an induced adjunction $F:\Ch(H)\rightleftarrows\Ch(G):U$. In Proposition \ref{model_Gchain} and Proposition \ref{Quillen_equivalence} we show that the {\it projective model structure} on $\Ch(H)$ admits a right transfer to $\Ch(G)$ yielding a Quillen equivalence. In particular, the {\it derived adjunction}
\[
LF:D(H)\rightleftarrows\Ho(\Ch(G)):RU
\]
is an equivalence of categories. Here $D(H)$ denotes the usual unbounded derived category of the ring $H$. Although this result is somewhat formal the necessary input from relative homological algebra plays a major role in the sequel. In particular, this concerns the notion of an {\it $I$-exact sequence} of $G$-representations. This is an object $X\in\Ch(G)$ such that the complex $UX\in\Ch(H)$ of $I$-invariants is exact (cf.\ Lemma \ref{pullback}). Moreover, pulling back the notion of a projective $H$-module yields the relative notion of an {\it $I$-projective $G$-representation}. These are characterized in Lemma \ref{summand_I_proj}. It lies at the heart of many of our arguments that the adjunction (\ref{FU_adjunction}) restricts to an equivalence between the full subcategories of projective $H$-modules and $I$-projective $G$-representations, respectively (cf.\ Corollary \ref{proj_equiv}). \\

We point out that if $G$ is compact then the category $\Rep^\infty_k(G)$ does not have any non-zero projective objects unless the group $G$ is discrete (cf.\ Remark \ref{not_enough_projectives}). However, we show that the {\it injective model structure} on $\Ch(G)$ admits a left transfer to $\Ch(H)$ (cf.\ Proposition \ref{left_transfer_exists}). A detailed analysis of the corresponding Quillen adjunction will be given elsewhere.\\

In \S\ref{section_3} we collect known results about model structures over Gorenstein rings. We start with the notion of a {\it Gorenstein projective object} in an abelian category with enough projectives. If $S$ is a Gorenstein ring then Hovey constructed a model structure on $\Mod(S)$ for which the cofibrant objects are the Gorenstein projective $S$-modules and the trivial objects are the $S$-modules of finite projective dimension (cf.\ Theorem \ref{GP_R_Mod}). This is called the {\it Gorenstein projective model structure}. If $S=H$ comes from a split connected reductive group over a non-archimedean local field as in Example \ref{H_Gorenstein} (iv) then the corresponding homotopy category seems particularly well-suited to study the {\it supersingular} $H$-modules (cf.\ Remark \ref{homotopy_cat_trivial}). Returning to the general case there is a Quillen equivalent model structure on $\Ch(S)$ constructed by Becker. This is called the {\it singular projective model structure} (cf.\ Theorem \ref{Becker}). Its homotopy category is related to the singularity category of $S$ studied by Krause (cf.\ \cite{Kra05}). We emphasize that there is an extensive literature on homological algebra over Gorenstein rings starting with the seminal article \cite{Buc} of Buchweitz.\\ 

In \S\ref{section_4} we specialize to the case $S=H$ assuming that the ring $H$ is Gorenstein. In Proposition \ref{SP_right_transfer} we show that the singular projective model structure on $\Ch(H)$ admits a right transfer along the adjunction $F:\Ch(H)\rightleftarrows\Ch(G):U$ and yields a Quillen equivalence. On both sides the {\it loop} and {\it suspension functors} are induced by the usual shift functors (cf.\ Lemma \ref{explicit_loop_susp} and Corollary \ref{ISP_stable}). Moreover, the derived adjunction is an equivalence of categories
\[
LF:K_\ac(\mathrm{Proj}(H))\rightleftarrows\Ho(\Ch(G)):RU
\]
where the left hand side denotes the category of acyclic complexes of projective $H$-modules up to chain homotopy. The cofibrant objects of $\Ch(G)$ are the $I$-exact complexes of $I$-projective $G$-representations (cf.\ Lemma \ref{ISP_cofibrant}) and the cofibrations are made explicit in Corollary \ref{ISP_cofibrations}.\\

In Proposition \ref{GP_right_transfer} we construct suitable path objects to show that the Gorenstein projective model structure on $\Mod(H)$ admits a right transfer along the adjunction (\ref{FU_adjunction}). The analysis of the derived adjunction 
\begin{equation}\label{LFRU_adjunction}
LF:\Ho(\Mod(H))\rightleftarrows\Ho(\Rep^\infty_k(G)):RU
\end{equation}
is much less formal than in the previous situations because the cofibrant $H$-modules are only Gorenstein projective. Our main observation is that for a Gorenstein projective module the unit of the adjunction (\ref{FU_adjunction}) is a split monomorphism with an explicit cokernel (cf.\ Proposition \ref{GP_split_mono}). Moreover, we can make explicit the homotopy relation on morphisms between cofibrant objects (cf.\ Lemma \ref{IGP_homotopy}). As a consequence, the left derived functor $LF$ is faithful and the right derived functor $RU$ is essentially surjective (cf.\ Theorem \ref{LF_faithful}). We also give a detailed analysis of the cofibrations in $\Rep^\infty_k(G)$. Although this model category is not constructed from a cotorsion pair directly the cofibrant and the trivial objects satisfy a suitable orthogonality relation (cf.\ Lemma \ref{cofibrant_ext}). Finally, in Proposition \ref{characterization_equiv} we give several equivalent obstructions to the derived adjunction (\ref{LFRU_adjunction}) being an equivalence. The most interesting of these is related to the fact that the right transfer need no longer be {\it stable}. The necessary computation of the loop and suspension functors is described in Lemma \ref{cylinder_obj}.\\

However, the situation can be improved significantly. As a major result we show in \S\ref{section_5} that the functor $U:\Rep^\infty_k(G)\to\Mod(H)$ restricts to an equivalence between suitable full subcategories. On the one hand, we let $\mathrm{GProj}(H)$ denote the full subcategory of $\Mod(H)$ consisting of all Gorenstein projective $H$-modules. On the other hand, we define the relative notion of an {\it $I$-Gorenstein projective $G$-representation} (cf.\ Definition \ref{Cabanes_category}). This yields a full subcategory $\C(G)$ of $\Rep^\infty_k(G)$ such that the functor $U$ restricts to an equivalence $\C(G)\cong\mathrm{GProj}(H)$ (cf.\ Theorem \ref{Cabanes}). There is an explicit inverse functor $\FF$ whose relation to the functor $F$ is spelled out in Lemma \ref{relation_F_FF}. If the ring $H$ is selfinjective then $\mathrm{GProj}(H)=\Mod(H)$ and $\C(G)$ is the category of representations which are both a quotient and a subobject of an $I$-projective $G$-representation. In this case we recover a result of Cabanes concerning the case of finite groups with a split $BN$-pair of characteristic $p$ (cf.\ Remark \ref{Cabanes_finite_reductive}).\\

Endowed with the class of short $I$-exact sequences the category $\C(G)$ turns out to be a {\it weakly idempotent complete Frobenius category} (cf.\ Corollary \ref{C(G)_Frobenius}). Thus, $\C(G)$ has a canonical exact model structure and an associated homotopy category (cf.\ Proposition \ref{Frobenius_model_structure}). If $\Rep^\infty_k(G)$ is endowed with the right transfer studied in \S\ref{section_4} and if $i:\C(G)\to\Rep^\infty_k(G)$ denotes the inclusion functor then the composition
\[
\Ho(\C(G))\stackrel{\Ho(i)}{\longrightarrow}\Ho(\Rep^\infty_k(G))\stackrel{RU}{\longrightarrow}\Ho(\Mod(H))
\]
is an equivalence of categories (cf.\ Theorem \ref{derived_right_inverse}). In particular, $RU$ admits a right inverse and $\Ho(i)$ allows us to view $\Ho(\C(G))$ as a (not necessarily full) subcategory of $\Ho(\Rep^\infty_k(G))$ on which $RU$ becomes an equivalence. We hope this result amply demonstrates the strength of our approach.\\

At the end of our article we specialize to the case that $H$ is the pro-$p$ Iwahori-Hecke algebra of a split connected reductive group $\mathbb{G}$ over a non-archimedean local field of residue characteristic $p$. If $\mathbb{G}$ is semisimple we consider the canonical Gorenstein projective resolution $\GP_\bullet(M)\to M\to 0$ of an $H$-module $M$ constructed by Ollivier and Schneider (cf.\ \cite{OS14}, \S6.4). On the other hand, we consider the complex $\C_c^\mathrm{or}(\mathcal{X}_{(\bullet)},\F(M))$ of oriented chains of the $G$-equivariant coefficient system $\F(M)$ on the Bruhat-Tits building $\mathcal{X}$ of $G$ associated with $M$ by the second author (cf.\ \cite{Koh}, \S3.2). In Proposition \ref{oriented_chains} we show that there is a functorial isomorphism of complexes $\C^\mathrm{or}_c(\mathcal{X}_{(\bullet)},\F(M))\cong\FF\GP(M)_\bullet$. In particular, the chain complex of $\F(M)$ consists of objects of $\C(G)$. For the chain complex of a fixed point system as studied in \cite{OS14}, \S3.1, this is generally not true (cf.\ Example \ref{not_C(G)}).\\

{\bf Acknowledgments.} This work was partially funded by the DFG grant {\it Smooth modular representation theory of $p$-adic reductive groups}. Both authors are members of the DFG Research Training Group {\it Symmetries and Classifying Spaces -- Analytic, Arithmetic and Derived}. The financial support of the DFG is gratefully acknowledged. We would also like to thank the anonymous referee for a very careful reading and many valuable suggestions.\\

{\bf Notation and conventions.} A class of objects of a category $\C$ will usually be identified with the corresponding full subcategory. If we denote an adjunction by $F:\C\rightleftarrows\D:U$ then $F$ is always assumed to be left adjoint to $U$. The unit (resp.\ the counit) of an adjunction will always be denoted by $\eta$ (resp.\ $\varepsilon$). We say that a functor $F:\C\to\D$ {\it preserves} (resp.\ {\it reflects}) a property (P) if $F*$ (resp.\ $*$) has property (P) whenever $*$ (resp.\ $F*$) does. For any unital ring $S$ we denote by $\Mod(S)$ the category of left $S$-modules. Given $M,N\in\Mod(S)$ we write $\Hom_S(M,N)$ for the set of $S$-linear maps from $M$ to $N$.


\section{Preliminaries on model categories}\label{section_1}

%
%


We largely follow the conventions of Hovey's book \cite{HovBook}. A {\it model category} is a locally small and bicomplete category $\C$ endowed with a {\it model structure} consisting of three subcategories -- whose morphisms are called the {\it weak equivalences}, {\it cofibrations} and {\it fibrations}, respectively -- and two {\it functorial factorizations} as in \cite{HovBook}, Definition 1.1.3. As usual, a cofibration (resp.\ a fibration) is called {\it trivial} if it is also a weak equivalence.\\

An object $X\in\C$ is called {\it cofibrant} (resp.\ {\it fibrant}) if the map from the initial object to $X$ (resp.\ from $X$ to the terminal object) is a cofibration (resp.\ a fibration). We denote by $\C_c$ (resp.\ $\C_f$, resp.\ $\C_\cf$) the full subcategory of $\C$ consisting of all objects which are cofibrant (resp.\ fibrant, resp.\ cofibrant and fibrant). By
\[
Q_c:\C\to\C_c\qquad\mbox{and}\qquad Q_f:\C\to\C_f
\]
we denote the {\it cofibrant replacement functor} and the {\it fibrant replacement functor}, respectively (cf.\ \cite{HovBook}, page 5). The cofibrant replacement functor will often be abbreviated to $Q_c=Q$.\\

A \emph{path object} for an object $B\in\C$ is an object $P\in\C$ such that the diagonal morphism $\Delta:B\to B\times B$ admits a factorization $B\stackrel{i}{\longrightarrow}P\stackrel{p}{\longrightarrow}B\times B$ where $i$ is a weak equivalence and $p$ is a fibration. We note that while path objects can be constructed via the functorial factorizations of $\Delta$ the above definition allows any factorization with the required properties.\\

If $f:A\to B$ and $g:A\to B$ are morphisms in $\C$ then a \emph{right homotopy} from $f$ to $g$ is a morphism $H:A\to P$ from $A$ into a path object $B\stackrel{i}{\longrightarrow}P\stackrel{p}{\longrightarrow}B\times B$ for $B$ such that $p_1 p H=f$ and $p_2 p H=g$. Here $p_1,p_2:B\times B\to B$ denote the two projections. The morphisms $f$ and $g$ are called {\it right homotopic} if there is a right homotopy from $f$ to $g$. \\

There are also dual notions of {\it cylinder objects} and {\it left homotopies} as in \cite{HovBook}, Definition 1.2.4. Morphisms which are both right and left homotopic are called {\it homotopic}. If $X\in\C_c$ and $Y\in\C_f$ then two morphisms $f,g:X\to Y$ are right homotopic if and only if they are left homotopic (cf.\ \cite{HovBook}, Proposition 1.2.5 (v)). Moreover, on the morphisms of $\C_\cf$ the homotopy relation $\sim$ is an equivalence relation compatible with composition (cf.\ \cite{HovBook}, Corollary 1.2.7). Dividing out the homotopy relation on the level of morphisms leads to a category denoted $\C_\cf/\!\sim$.\\

The {\it homotopy category} $\Ho(\C)$ of a model category $\C$ is the localization of $\C$ with respect to the class of weak equivalences (cf.\ \cite{HovBook}, Definition 1.2.1). We will usually just write $\C\to\Ho(\C)$ for the canonical functor of $\C$ into its homotopy category. Any subcategory of $\C$ has an induced class of weak equivalences and the corresponding homotopy category is defined analogously. It is a fundamental result that the inclusion $\C_\cf\to\C$ induces equivalences of categories
\begin{equation}\label{main_thm_model_cat}
\C_\cf/\!\sim\;\stackrel{\cong}{\longrightarrow}\;\Ho(\C_\cf)\;\stackrel{\cong}{\longrightarrow}\;\Ho(\C)
\end{equation}
(cf.\ \cite{HovBook}, Theorem 1.2.10 (i)). Given two objects $X,Y\in\C$ the set of morphisms from $X$ to $Y$ in $\Ho(\C)$ is usually denoted by $[X,Y]$.\\

Recall that a functor $F:\C\to\D$ (resp.\ $U:\D\to\C$) between model categories is called \emph{left Quillen} (resp.\ \emph{right Quillen}) if it admits a right (resp.\ left) adjoint and preserves cofibrations and trivial cofibrations (resp.\ fibrations and trivial fibrations). If $F:\C\rightleftarrows\D:U$ is an adjunction between model categories then $F$ is left Quillen if and only if $U$ is right Quillen (cf.\ \cite{HovBook}, Lemma 1.3.4). In this case we speak of a \emph{Quillen adjunction}. A Quillen adjunction is called a \emph{Quillen equivalence} if for all cofibrant $X\in\C$ and all fibrant $Y\in\D$ a morphism $FX\to Y$ is a weak equivalence in $\D$ if and only if the adjoint morphism $X\to UY$ is a weak equivalence in $\C$ (cf.\ \cite{HovBook}, Definition 1.3.12).\\

We refer to \cite{HovBook}, Proposition 1.3.13 and Corollary 1.3.16, for various characterizations of when a Quillen adjunction is a Quillen equivalence. At this point we just recall how this can be seen on the level of homotopy categories. Note that the functors
\[
\C\stackrel{Q_c}{\longrightarrow}\C_c\longrightarrow\Ho(\C_c)\qquad\mbox{and}\qquad\C_c\stackrel{F}{\longrightarrow}\D\longrightarrow\Ho(\D)
\]
send weak equivalences to isomorphisms. Indeed, for the left functor this follows from the 2-out-of-3 property for weak equivalences. For the right functor this follows from \cite{HovBook}, Lemma 1.1.12. By the universal property of localizations there are induced functors $\Ho(Q_c):\Ho(\C)\to\Ho(\C_c)$ and $\Ho(F):\Ho(\C_c)\to\Ho(\D)$. The {\it left derived functor} $LF:\Ho(\C)\to\Ho(\D)$ of $F$ is then defined to be the composite
\begin{equation}\label{left_derived}
\xymatrix{
\Ho(\C)\ar[r]^{\Ho(Q_c)}&\Ho(\C_c)\ar[r]^{\Ho(F)}& \Ho(\D)
}
\end{equation}
(cf.\ \cite{HovBook}, Definition 1.3.6). Similarly, one defines the {\it right derived functor} $RU:\Ho(\D)\to\Ho(\C)$ of $U$ as the composite
\begin{equation}\label{right_derived}
\xymatrix{
\Ho(\D)\ar[r]^{\Ho(Q_f)}&\Ho(\D_f)\ar[r]^{\Ho(U)}&\Ho(\C).
}
\end{equation}
In this situation $LF$ is left adjoint to $RU$ (cf.\ \cite{HovBook}, Lemma 1.3.10). Moreover, $F$ and $U$ form a Quillen equivalence if and only if $LF$ and $RU$ are inverse equivalences of categories (cf.\ \cite{HovBook}, Proposition 1.3.13).\\

A powerful tool to construct model structures on an abelian category $\A$ was developed by Hovey in \cite{Hov2}. It is based on a fixed {\it proper class} $\mathcal{P}$ of short exact sequences in $\A$ leading to relative extension groups $\Ext_\mathcal{P}^\bullet(X,Y)$ as in \cite{Mac}, Chapter XII.4. An epimorphism $f:X\to Y$ in $\A$ is called a {\it $\mathcal{P}$-epimorphism} if the short exact sequence
\[
0\longrightarrow\ker(f)\longrightarrow X\stackrel{f}{\longrightarrow}Y\longrightarrow 0
\]
belongs to $\mathcal{P}$. The notion of a {\it $\mathcal{P}$-monomorphism} is defined dually. Given classes $\C$ and $\F$ of objects of $\A$ consider the classes of objects
\[
^\perp\F=\{X\in\A\;|\;\Ext^1_\mathcal{P}(X,Y)=0\mbox{ for all }Y\in\F\}
\]
and
\[
\C^\perp=\{Y\in\A\;|\;\Ext^1_\mathcal{P}(X,Y)=0\mbox{ for all }X\in\C\}.
\]
A {\it cotorsion pair} with respect to $\mathcal{P}$ is a pair $(\C,\F)$ of classes of objects of $\A$ satisfying $\C={^\perp\F}$ and $\C^\perp=\F$ (cf.\ \cite{Hov2}, Definition 2.3). It is said to have {\it enough functorial projectives} if for any $X\in\A$ there is a $\mathcal{P}$-epimorphism $f:Y\to X$ which is functorial in $X$ and satisfies $Y\in\C$ and $\ker(f)\in\F$. The existence of {\it enough functorial injectives} is defined dually. A cotorsion pair which has both enough functorial projectives and enough functorial injectives is called {\it functorially complete}.\\

For the definition of a model structure on $\A$ which is compatible with $\mathcal{P}$ we refer to \cite{Hov2}, Definition 2.1. A subcategory of $\A$ is called {\it $\mathcal{P}$-thick} if it is closed under retracts and satisfies the 2-out-of-3 property with respect to short exact sequences in $\mathcal{P}$. If $\A$ is an abelian category endowed with the structure of a model category then an object $X\in\A$ is called {\it trivial} if the morphism $0\to X$ (or equivalently the morphism $X\to 0$) is a weak equivalence. We denote by $\A_t$ the full subcategory of $\A$ consisting of all trivial objects. The following fundamental theorem is due to Hovey (cf.\ \cite{Hov2}, Theorem 2.2).

\begin{thm}[Hovey]\label{cotorsion_pairs}
Let $\A$ be a bicomplete abelian category with a fixed proper class $\mathcal{P}$ of short exact sequences.
\begin{itemize}
\item[(i)]If $\A$ carries a model structure which is compatible with $\mathcal{P}$ then $\A_t$ is $\mathcal{P}$-thick and $(\A_c,\A_t\cap\A_f)$ and $(\A_c\cap\A_t,\A_f)$ are functorially complete cotorsion pairs.
\item[(ii)]If $\C$, $\F$ and $\T$ are full subcategories of $\A$ such that $\T$ is $\mathcal{P}$-thick and such that $(\C\cap\T,\F)$ and $(\C,\T\cap\F)$ are functorially complete cotorsion pairs then there is a unique model structure on $\A$ which is compatible with $\mathcal{P}$ and which satisfies $\C=\A_c$, $\F=\A_f$ and $\T=\A_t$. \hfill$\Box$
\end{itemize}
\end{thm}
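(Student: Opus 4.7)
The plan is to follow Hovey's original argument in \cite{Hov2}, breaking each implication into manageable steps that rest on the factorization and lifting axioms together with the definition of compatibility between a model structure and the proper class $\mathcal{P}$. Throughout I would use the two directions of compatibility: every (trivial) cofibration is a $\mathcal{P}$-monomorphism whose cokernel is (trivially) cofibrant, and conversely every $\mathcal{P}$-monomorphism with (trivially) cofibrant cokernel is a (trivial) cofibration, together with the dual statements for fibrations.

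For part (i), closure of $\A_t$ under retracts is immediate from the corresponding closure of the class of weak equivalences. To verify the 2-out-of-3 property along a short exact sequence $0\to X\to Y\to Z\to 0$ in $\mathcal{P}$, I would use compatibility to view $X\to Y$ as a cofibration with cokernel $Z$ and then propagate the weak-equivalence status among $0,X,Y,Z$ using the 2-out-of-3 axiom, after replacing objects by (trivially) cofibrant models when necessary. For the cotorsion pair $(\A_c,\A_t\cap\A_f)$, I would prove each orthogonality inclusion separately. Given $X\in\A_c$, $Y\in\A_t\cap\A_f$ and a $\mathcal{P}$-extension $0\to Y\to E\to X\to 0$, compatibility identifies $E\to X$ with a trivial fibration and $0\to X$ with a cofibration, so the lifting axiom splits the extension and yields $\Ext^1_\mathcal{P}(X,Y)=0$. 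Conversely, if $X\in{}^\perp(\A_t\cap\A_f)$ I would factor $0\to X$ as a cofibration followed by a trivial fibration $X'\to X$: the kernel of the latter lies in $\A_t\cap\A_f$ by compatibility, the resulting $\mathcal{P}$-extension splits by hypothesis, and $X$ becomes a retract of the cofibrant object $X'$. The pair $(\A_c\cap\A_t,\A_f)$ is treated dually, and enough functorial projectives and injectives for both pairs follow directly from the functorial factorizations of the model structure.

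For part (ii), I would declare cofibrations to be $\mathcal{P}$-monomorphisms with cokernel in $\C$, fibrations to be $\mathcal{P}$-epimorphisms with kernel in $\F$, and weak equivalences to be those morphisms that factor as a $\mathcal{P}$-monomorphism with cokernel in $\C\cap\T$ followed by a $\mathcal{P}$-epimorphism with kernel in $\T\cap\F$. The functorial factorization axioms then come directly from enough functorial projectives in each of the two cotorsion pairs, and the lifting axioms translate via the standard dictionary between $\Ext^1_\mathcal{P}$-vanishing and the lifting property of $\mathcal{P}$-monomorphisms with prescribed cokernels against $\mathcal{P}$-epimorphisms with prescribed kernels. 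Uniqueness is forced by part (i): under any compatible model structure with the prescribed (co)fibrations and trivial objects one must have $\C=\A_c$, $\F=\A_f$, $\T=\A_t$, and the three classes of cofibrations, fibrations and weak equivalences determine the model structure.

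The main obstacle is the 2-out-of-3 property for this candidate class of weak equivalences. The natural strategy is to produce an intrinsic characterization: a morphism is a weak equivalence exactly when a suitable (co)kernel or (co)fibre object of a replacement lies in $\T$. Once such a characterization is established one can invoke $\mathcal{P}$-thickness of $\T$ to deduce 2-out-of-3 from the long exact sequence structure encoded by the two cotorsion pairs. Making this rigorous requires a careful cross-comparison of factorizations with respect to $(\C\cap\T,\F)$ and $(\C,\T\cap\F)$, as well as showing that the two $\Ext^1_\mathcal{P}$-vanishing conditions force the requisite closure properties; this is the technical heart of Hovey's argument and is where the hypothesis of $\mathcal{P}$-thickness of $\T$ is used in an essential way.
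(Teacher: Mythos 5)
This theorem is quoted verbatim from Hovey (\cite{Hov2}, Theorem 2.2) and the paper supplies no proof of its own, so the only meaningful comparison is with Hovey's argument — which your outline follows faithfully: the definitions you propose in part (ii) (cofibrations as $\mathcal{P}$-monomorphisms with cokernel in $\C$, fibrations as $\mathcal{P}$-epimorphisms with kernel in $\F$, weak equivalences as composites of a $\mathcal{P}$-monomorphism with cokernel in $\C\cap\T$ and a $\mathcal{P}$-epimorphism with kernel in $\T\cap\F$) are exactly those of \cite{Hov2}, Definition 5.1, which the paper itself points to, and your lifting/splitting arguments for part (i) are the standard ones. Two caveats. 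First, the factorization axioms need the full functorial completeness of both cotorsion pairs — enough functorial injectives as well as projectives — since the two factorizations (cofibration followed by trivial fibration, and trivial cofibration followed by fibration) use the mapping-cylinder and mapping-cocylinder constructions in opposite directions; your phrasing mentions only enough projectives. Second, and more substantially, your sketch does not actually establish the 2-out-of-3 property for the proposed class of weak equivalences, nor the characterization of \emph{trivial} (co)fibrations as $\mathcal{P}$-monomorphisms with trivially cofibrant cokernel (resp.\ dually), both of which you invoke; you correctly identify this as the technical heart of the matter (it occupies \S 5 of \cite{Hov2}, culminating in the characterization of weak equivalences used elsewhere in this paper as Lemma 5.8), but it remains an acknowledged gap rather than a completed step. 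As an outline of Hovey's proof the proposal is accurate; as a self-contained proof it is incomplete precisely at that point.
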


In the situation of Theorem \ref{cotorsion_pairs} (ii) the definition of the corresponding cofibrations, fibrations and weak equivalences is given explicitly in \cite{Hov2}, Definition 5.1. If $\mathcal{P}$ is the class of all short exact sequences then a model structure (resp.\ a model category) as in Theorem \ref{cotorsion_pairs} is called {\it abelian}. An abelian model structure is called {\it hereditary} if the class of cofibrant objects is closed under taking kernels of epimorphisms and if the class of fibrant objects is closed under taking cokernels of monomorphisms.\\

For simplicity we will often assume that all objects of our categories are {\it small} (cf.\ \cite{Hov1}, Definition A.1). For example, this holds in any Grothendieck category (cf.\ \cite{Hov1}, Proposition A.2) which is the main case of interest for us. We refer to \cite{Hov2}, Corollary 6.8, for a manageable criterion to check whether a given cotorsion pair on a Grothendieck category is functorially complete.\\

Let $\mathcal{I}$ be a class of morphisms in a category $\C$. Recall that a morphism in $\C$ is called {\it $\mathcal{I}$-injective} if it satisfies the {\it right lifting property} with respect to all morphisms in $\mathcal{I}$ in the sense of \cite{HovBook}, Definition 1.1.2. The class of these morphisms is denoted by $\mathcal{I}$-inj. A morphism in $\C$ is called an {\it $\mathcal{I}$-cofibration} if it satisfies the {\it left lifting property} with respect to all morphisms in $\mathcal{I}$-inj. The class of these morphisms is denoted by $\mathcal{I}$-cof.\\

For the definition of a model category $\C$ which is {\it cofibrantly generated} we refer to \cite{HovBook}, Definition 2.1.17. If every object of $\C$ is small then this means that there are sets $\mathcal{I}$ and $\mathcal{J}$ of morphisms of $\C$ such that the class of fibrations is equal to $\mathcal{J}$-inj and the class of trivial fibrations is equal to $\mathcal{I}$-inj. The elements of the sets $\mathcal{I}$ and $\mathcal{J}$ are called the {\it generating cofibrations} and the {\it generating trivial cofibrations} of $\C$, respectively.\\

Assume that we have an adjunction $F:\C\rightleftarrows\D:U$. If $\C$ is a model category and if $\D$ is bicomplete then there is a natural candidate for an associated model structure on $\D$. Namely, define a fibration (resp.\ a weak equivalence, resp.\ a trivial fibration) in $\D$ to be a morphism $f$ in $\D$ such that $Uf$ is a fibration (resp.\ a weak equivalence, resp.\ a trivial fibration) in $\C$. Moreover, define a cofibration in $\D$ to be a morphism which satisfies the left lifting property with respect to all trivial fibrations. If these three classes of morphisms form part of a model structure on $\D$ then this is called the \emph{right transfer} and we say that the right transfer exists. We note that the term {\it right induction} is common, as well.\\

In order to discuss the existence of the right transfer we call a morphism in $\D$ \emph{anodyne} if it satisfies the left lifting property with respect to all fibrations. If the right transfer exists then an anodyne morphism is automatically a weak equivalence (cf.\ \cite{HovBook}, Lemma 1.1.10). In certain situations this condition is also sufficient. Indeed, we have the following result which is a variant of \cite{Hirsch}, Theorem 11.3.2.

\begin{thm}\label{right_transfer} Let $F:\C\rightleftarrows\D:U$ be an adjunction where $\C$ is a cofibrantly generated model category with generating cofibrations $\mathcal{I}$ and generating trivial cofibrations $\mathcal{J}$. Moreover, assume that $\D$ is bicomplete and that all objects of $\D$ are small. If every anodyne morphism in $\D$ is a weak equivalence then the right transfer exists. It is cofibrantly generated with generating cofibrations $F\mathcal{I}$ and generating trivial cofibrations $F\mathcal{J}$.
\end{thm}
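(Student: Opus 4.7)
The plan is to reduce the statement to Kan's recognition theorem for cofibrantly generated model categories (\cite{HovBook}, Theorem 2.1.19) applied to the sets $F\mathcal{I}$ and $F\mathcal{J}$ on $\D$, together with the class $\W$ of morphisms $f$ in $\D$ such that $Uf$ is a weak equivalence in $\C$.

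The first step is to exploit the adjunction $F\dashv U$ to translate lifting properties cleanly. For $i\in\mathcal{I}$ and a morphism $g$ of $\D$, there is a natural bijection between commutative squares from $Fi$ to $g$ and from $i$ to $Ug$, and this bijection carries diagonal fillers to diagonal fillers. Consequently $F\mathcal{I}\text{-inj}$ consists exactly of the morphisms $g$ in $\D$ with $Ug\in\mathcal{I}\text{-inj}$, i.e.\ with $Ug$ a trivial fibration in $\C$; by the definition of the candidate right transfer, this is the class of trivial fibrations in $\D$. Symmetrically, $F\mathcal{J}\text{-inj}$ is the class of fibrations in $\D$, and $F\mathcal{J}\text{-cof}$ is the class of anodyne morphisms.

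Next I would verify the six conditions of Kan's recognition theorem. Condition~(1), that $\W$ satisfies the 2-out-of-3 property and is closed under retracts, is immediate since $U$ preserves compositions and retracts while weak equivalences in $\C$ have these properties. Conditions~(2) and~(3) on smallness of the domains of $F\mathcal{I}$ and $F\mathcal{J}$ relative to $F\mathcal{I}$-cell and $F\mathcal{J}$-cell are automatic because every object of $\D$ is small. For~(4), one needs $F\mathcal{J}\text{-cell}\subseteq\W\cap F\mathcal{I}\text{-cof}$; the inclusion $F\mathcal{J}\text{-cof}\subseteq F\mathcal{I}\text{-cof}$ follows from $F\mathcal{I}\text{-inj}\subseteq F\mathcal{J}\text{-inj}$ since trivial fibrations in $\D$ are fibrations, and the inclusion $F\mathcal{J}\text{-cell}\subseteq\W$ is exactly where the hypothesis enters, as $F\mathcal{J}\text{-cell}\subseteq F\mathcal{J}\text{-cof}$ is the class of anodyne morphisms and these are assumed to be weak equivalences. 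Condition~(5), $F\mathcal{I}\text{-inj}\subseteq\W\cap F\mathcal{J}\text{-inj}$, follows from the first step since $Ug$ being a trivial fibration implies it is both a fibration and a weak equivalence. Finally, for the alternative half of~(6) I would show $\W\cap F\mathcal{J}\text{-inj}\subseteq F\mathcal{I}\text{-inj}$: if $Uf$ is a fibration and $Uf$ is a weak equivalence in $\C$, then $Uf$ is a trivial fibration, hence $f\in F\mathcal{I}\text{-inj}$.

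Kan's theorem then produces a cofibrantly generated model structure on $\D$ with generating (trivial) cofibrations $F\mathcal{I}$ and $F\mathcal{J}$, weak equivalences $\W$, and fibrations (resp.\ trivial fibrations) equal to $F\mathcal{J}\text{-inj}$ (resp.\ $F\mathcal{I}\text{-inj}$); by the first step these coincide with the candidate classes of the right transfer, which establishes the theorem. The single genuinely non-formal point in the argument is condition~(4), namely that cells built from $F\mathcal{J}$ lie in $\W$, and it is precisely for this reason that the assumption on anodyne morphisms has been made; every other verification is a direct translation via $F\dashv U$.
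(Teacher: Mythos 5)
Your proposal is correct and follows essentially the same route as the paper: both translate the lifting properties through the adjunction to identify $F\mathcal{I}\text{-inj}$ and $F\mathcal{J}\text{-inj}$ with the candidate trivial fibrations and fibrations, and then invoke the recognition theorem for cofibrantly generated model categories (you cite Hovey's Theorem 2.1.19, the paper cites the equivalent Hirschhorn Theorem 11.3.1), checking the disjunctive condition via its ``(b)'' half and using the anodyne hypothesis exactly where you say it is needed.
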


\begin{proof}
If $f$ is a morphism in $\D$ then $Uf\in\mathcal{I}$-inj (resp.\ $Uf\in\mathcal{J}$-inj) if and only if $f\in F\mathcal{I}$-inj (resp.\ $f\in F\mathcal{J}$-inj) by the adjunction. Therefore, the class of fibrations in $\D$ is $F\mathcal{J}$-inj, the class of trivial fibrations is $F\mathcal{I}$-inj, the class of cofibrations is $F\mathcal{I}$-cof and the class of anodyne morphisms is $F\mathcal{J}$-cof.\\

It now suffices to check that the conditions in \cite{Hirsch}, Theorem 11.3.1, are satisfied for $F\mathcal{I}$, $F\mathcal{J}$ and the class of weak equivalences. Note first that the latter has the 2-out-of-3 property and is closed under retracts because $U$ is a functor. Conditions (1)--(3) and (4)(b) in \cite{Hirsch}, Theorem 11.3.1, follow from the fact that every object in $\D$ is small, that every anodyne morphism is a weak equivalence and that the morphisms in $F\mathcal{I}$-inj are exactly the trivial fibrations.
\end{proof}

\begin{rem} The idea of the right transfer goes back to the original work of Quillen (cf.\ \cite{Quillen}, section II.4). There are more general versions of the above theorem replacing the smallness assumptions on $\D$ by suitable properties of the functor $F$. See \cite{Hirsch}, Theorem 11.3.2, for instance.
\end{rem}

It might still be non-trivial to check that anodyne morphisms are weak equivalences. However, there is a simple condition under which this is true. Indeed, assume that  $\C$ is a model category and that we have an adjunction $F:\C\rightleftarrows\D:U$ where $\D$ is bicomplete. Since we have notions of fibrations and weak equivalences in $\D$ we can make sense of path objects and right homotopies in $\D$, as well. Moreover, since $U$ preserves products, fibrations and weak equivalences it also preserves path objects and right homotopies. Due to a lack of reference we include a proof of the following variant of \cite{GS07}, Theorem 3.8.
\begin{prop}\label{path_objects}
Assume that  $\C$ is a model category and that we have an adjunction $F:\C\rightleftarrows\D:U$ where $\D$ is bicomplete. If every object in $\C$ is fibrant and if every object of $\D$ admits a path object then every anodyne morphism in $\D$ is a weak equivalence.
\end{prop}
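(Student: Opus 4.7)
The plan is to show that if $f: A \to B$ is anodyne in $\D$ then $Uf$ becomes an isomorphism in $\Ho(\C)$, and hence is a weak equivalence in $\C$, which by the definition of weak equivalence in $\D$ means $f$ is a weak equivalence in $\D$. The strategy is to use the anodyne property of $f$ twice: first to extract a strict retraction $r: B \to A$ with $rf = 1_A$, and then to produce a right homotopy between $fr$ and $1_B$.

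For the retraction, I would first observe that for every object $X$ of $\D$ the terminal map $X \to *_\D$ is a fibration in $\D$. This uses that $U$ preserves terminal objects as a right adjoint, so that $U(X \to *_\D)$ identifies with $UX \to *_\C$, and the latter is a fibration because every object of $\C$ is fibrant. Applying the LLP of $f$ against $A \to *_\D$ to the obvious square with top arrow $1_A$ then yields a retraction $r$ with $rf = 1_A$.

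Next, pick a path object $B \stackrel{i}{\to} P \stackrel{(p_1, p_2)}{\longrightarrow} B \times B$ for $B$, which exists by hypothesis, and consider the lifting problem
\[
\xymatrix{A \ar[r]^{if} \ar[d]_{f} & P \ar[d]^{(p_1,p_2)} \\ B \ar[r]_{(fr,\, 1_B)} & B \times B.}
\]
This square commutes: the identity $rf = 1_A$ forces $(fr, 1_B) \circ f = (f, f)$, which also equals $(p_1, p_2) \circ i \circ f$. Since $f$ is anodyne and $(p_1, p_2)$ is a fibration, a lift $H: B \to P$ exists with $p_1 H = fr$ and $p_2 H = 1_B$, so $H$ is a right homotopy from $fr$ to $1_B$ in $\D$.

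Applying $U$ and using that $U$ preserves products, fibrations and weak equivalences -- hence also path objects and right homotopies -- yields a right homotopy from $Ufr$ to $1_{UB}$ in $\C$. The projections $Up_1$ and $Up_2$ are both one-sided inverses in $\C$ to the weak equivalence $Ui$, so they become mutually equal isomorphisms in $\Ho(\C)$, which forces any pair of right homotopic morphisms in $\C$ to coincide in $\Ho(\C)$. Thus $[Uf] \circ [Ur] = [1_{UB}]$, and combined with $[Ur] \circ [Uf] = [1_{UA}]$ this exhibits $[Uf]$ as an isomorphism in $\Ho(\C)$. The main obstacle is choosing the bottom arrow $(fr, 1_B)$ of the key lifting square, which requires first realising that a retraction of $f$ must be extracted beforehand; once this is done, the rest is a direct diagram chase.
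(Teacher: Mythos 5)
Your proof is correct and follows essentially the same route as the paper's: construct a retraction $r$ of $f$ by lifting against the terminal map (all objects of $\D$ being fibrant since $U$ preserves the terminal object), then lift against the path-object fibration to produce a right homotopy from $fr$ to $1_B$, apply $U$, and conclude that $[Uf]$ is invertible in $\Ho(\C)$. The only cosmetic difference is that you spell out why right homotopic maps agree in $\Ho(\C)$ instead of citing \cite{HovBook}, Theorem 1.2.10; your argument for that step is valid.
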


\begin{proof}
First note that every object in $\D$ is fibrant because $U$ preserves the terminal object $*$. Let $j: A\to B$ be an anodyne morphism in $\D$. By applying the left lifting property to the square
\[
\begin{tikzcd}
A\arrow{r}{\id_A}\arrow{d}{j} & A\arrow{d}\\
B\arrow[swap]{r} & *
\end{tikzcd}
\]
we get a morphism $u:B\to A$ such that $u j=\id_A$. Consider $f=j u:B\to B$ and let $P$ be a path object for $B$ with factorization $\Delta=pi$ into a fibration $p$ and a weak equivalence $i$. Since $u j=\id_A$ we have $f j=j$. Therefore, the outer square in the diagram
\[
\begin{tikzcd}
A\arrow{r}{ij}\arrow{d}{j} & P\arrow{d}{p}\\
B\arrow[swap]{r}{\id_B\times f}\arrow[dashrightarrow]{ur}{H} & B\times B
\end{tikzcd}
\]
commutes. The left lifting property implies the existence of $H:B\to P$ making the two triangles commute. By construction, $H$ is a right homotopy from $\id_B$ to $f$. Since $U$ preserves right homotopies $\id_{UB}$ is right homotopic to $Uf=Uj Uu$ in $\C$. Since right homotopic morphisms are identified in Ho$(\C)$ (cf.\ \cite{HovBook}, Theorem 1.2.10 (iii)) $Uu$ is right inverse to $Uj$ in Ho$(\C)$. Of course, it is also left inverse because this already holds in $\C$. Consequently, $Uj$ becomes an isomorphism in $\Ho(\C)$, hence is a weak equivalence in $\C$ by \cite{HovBook}, Theorem 1.2.10 (iv). Thus, $j$ is a weak equivalence in $\D$.
\end{proof}

\begin{rem}The result in Proposition \ref{path_objects} is again implicit in the work of Quillen \cite{Quillen} who works in the simplicial setting. The condition that every object is fibrant can be replaced by the more general condition that $\D$ admits a functorial fibrant replacement and functorial path objects (cf.\ \cite{GS07}, Theorem 3.8). The above proof is a rewording of its simplicial version in \cite{GJ99}, Lemma II.6.1.
\end{rem}

\begin{lem}\label{unit_equiv}
Suppose that we have an adjunction $F:\C\rightleftarrows\D:U$ where $\D$ is bicomplete and $\C$ is a model category whose right transfer exists. Then $F$ and $U$ form a Quillen adjunction. This is a Quillen equivalence if and only if the unit $\eta_X: X\to UFX$ is a weak equivalence for all cofibrant $X$ in $\C$.
\end{lem}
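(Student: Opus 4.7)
The plan is to exploit the defining feature of the right transfer, namely that weak equivalences in $\D$ are created by $U$. This is stronger than what general criteria (such as \cite{HovBook}, Corollary 1.3.16) require, and it will reduce everything to a one-line application of the 2-out-of-3 property.

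First I would observe that $F \dashv U$ is automatically a Quillen adjunction. Indeed, by construction of the right transfer the fibrations (resp.\ trivial fibrations) of $\D$ are precisely those morphisms $f$ with $Uf$ a fibration (resp.\ trivial fibration) in $\C$. Thus $U$ preserves fibrations and trivial fibrations, so it is right Quillen. The key supplementary remark, which I would record explicitly, is that in the right transfer the functor $U$ both preserves and reflects weak equivalences, since weak equivalences in $\D$ are by definition the morphisms $f$ with $Uf$ a weak equivalence in $\C$.

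For the implication $(\Leftarrow)$, assume $\eta_X$ is a weak equivalence for every cofibrant $X \in \C$. Let $X \in \C$ be cofibrant, $Y \in \D$ fibrant, and $g : FX \to Y$ a morphism with adjoint $\tilde g : X \to UY$. By the triangle identities, $\tilde g = Ug \circ \eta_X$. Since $\eta_X$ is a weak equivalence, the 2-out-of-3 property gives that $\tilde g$ is a weak equivalence if and only if $Ug$ is. But $U$ both preserves and reflects weak equivalences, so $Ug$ is a weak equivalence if and only if $g$ is. Hence $\tilde g$ is a weak equivalence if and only if $g$ is, which is precisely the definition of a Quillen equivalence.

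For the converse, let $X \in \C$ be cofibrant. Factor the unique morphism $FX \to *$ into a trivial cofibration $j : FX \to Y$ followed by a fibration $Y \to *$, so that $Y$ is fibrant and $j$ is a weak equivalence. The adjoint of $j$ is $\tilde j = Uj \circ \eta_X$. By the Quillen equivalence hypothesis (applied to the cofibrant $X$, fibrant $Y$ and the weak equivalence $j$), the morphism $\tilde j$ is a weak equivalence. Since $U$ preserves weak equivalences, $Uj$ is also a weak equivalence, so 2-out-of-3 applied to $\tilde j = Uj \circ \eta_X$ forces $\eta_X$ to be a weak equivalence. I do not foresee any serious obstacle here; the content of the lemma is entirely absorbed by the definition of the right transfer.
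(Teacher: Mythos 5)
Your proof is correct and follows essentially the same route as the paper: $U$ is right Quillen because the right transfer creates fibrations and trivial fibrations, the forward implication is the 2-out-of-3 argument applied to $\widetilde{g}=Ug\circ\eta_X$ using that $U$ creates weak equivalences, and the converse applies the Quillen-equivalence property to a fibrant replacement of $FX$ (your explicit factorization of $FX\to *$ is exactly that). No gaps.
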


\begin{proof}
It follows immediately from the definition of the right transfer that the functor $U$ is right Quillen. Now assume that $\eta_X$ is a weak equivalence for all cofibrant $X$ in $\C$. Consider a morphism $f:FX\to Y$ where $X\in \C$ is cofibrant and $Y\in \D$ is fibrant. By definition of the right transfer $f:FX\to Y$ is a weak equivalence if and only if so is $Uf$. By the 2-out-of-3 property this is true if and only if $Uf \eta_X$ is a weak equivalence. However, the latter is the adjoint of $f$. Thus, we have a Quillen equivalence.\\

Conversely, if we have a Quillen equivalence and if $X$ is cofibrant in $\C$ consider the fibrant replacement $f:FX\to Q_fFX$ of $FX$ in $\D$. Since this is a weak equivalence so are $Uf$ and the adjoint morphism $Uf\eta_X$. The 2-out-of-3 property implies that also $\eta_X$ is a weak equivalence.
\end{proof}

A bicomplete category is called {\it pointed} if the morphism from the initial to the terminal object is an isomorphism. The homotopy category of any pointed model category $\C$ admits two endofunctors $\Sigma$ and $\Omega$ called the {\it suspension functor} and the {\it loop functor}, respectively. Given $X\in \C$ fibrant and $P$ a path object for $X$, the loop $\Omega X$ of $X$ is defined to be the pullback of the diagram
$$
P\stackrel{p}{\longrightarrow}X\times X\longleftarrow *
$$
where $*$ denotes the initial/terminal object of $\C$. Given $X\in \C$ cofibrant, its suspension $\Sigma X$ is defined dually using a cylinder object (cf.\ \cite{BR}, Definition 3.1.2). The suspension functor $\Sigma$ is left adjoint to the loop functor $\Omega$ (cf.\ \cite{BR}, Proposition 3.1.7). If they are equivalences of categories then the model category $\C$ is called {\it stable}. If $F:\C\rightleftarrows\mathcal{D}:U$ is a Quillen adjunction then $LF$ commutes with $\Sigma$ and $RU$ commutes with $\Omega$ (cf.\ \cite{BR}, Corollary 3.1.4). If both model categories are stable then also $RU$ commutes with $\Sigma$ and $LF$ commutes with $\Omega$ (cf.\ \cite{BR}, Corollary 3.2.10).\\

To compute the suspension of an object $X$ in an abelian model category $\A$ choose a trivial fibration $Y\to X$ with $Y$ cofibrant. If $0\to Y\to Y'\to Y''\to 0$ is an exact sequence with $Y'$ trivially fibrant and $Y''$ cofibrant then $\Sigma X\cong Y''$ in $\Ho(\A)$. For the loop functor one chooses a trivial cofibration $X\to Y$ with $Y$ fibrant. If $0\to Y''\to Y'\to Y\to 0$ is an exact sequence in $\C$ with $Y'$ trivially cofibrant and $Y''$ fibrant then $\Omega X\cong Y''$ in $\Ho(\A)$. If an abelian model structure is hereditary then it is also stable (cf.\ \cite{Beck14}, Corollary 1.1.15). \\

Finally, we give two standard examples of a model category that will play a role in the following. Given an abelian category $\A$ we denote by $\Ch(\A)$ the category of unbounded chain complexes 
\[
X=X_\bullet=(X_\bullet,d_\bullet)=(\ldots\stackrel{d_2}{\longrightarrow}A_1
\stackrel{d_1}{\longrightarrow}A_0\stackrel{d_0}{\longrightarrow}A_{-1}
\stackrel{d_{-1}}{\longrightarrow}\ldots)
\]
over $\A$. Let $\iota_0:\A\to\Ch(\A)$ denote the functor obtained by viewing an object of $\A$ as a complex concentrated in degree zero. We will often use it to view $\A$ as a full subcategory of $\Ch(\A)$ and suppress it from the notation. Recall that $\iota_0$ admits both a left adjoint $Q_0$ and a right adjoint $Z_0$ given by
\begin{equation}\label{left_right_adjoints}
Q_0X=\coker(d_1)\quad\mbox{and}\quad Z_0X=\ker(d_0).
\end{equation}
For any integer $n$ we denote by $X\mapsto B_nX=\im(d_{n+1})$ the $n$-th boundary functor, by $X\mapsto Z_nX=\ker(d_n)$ the $n$-th cycle functor and by $X\mapsto H_nX=Z_nX/B_nX$ the $n$-th homology functor $\Ch(\A)\to\A$. Let $[1]$ denote the shift functor on $\Ch(\A)$ defined by $(X_\bullet,d_\bullet)[1]=(Y_\bullet,e_\bullet)$ with $Y_n=X_{n+1}$ and $e_n=-d_{n+1}$ for all $n\in\Z$. We denote by $[-1]$ the two-sided inverse of $[1]$.\\

Note that together with $\A$ also the category $\Ch(\A)$ is abelian. Moreover, we have the following standard facts.

\begin{lem}\label{complexes_Grothendieck}
\begin{enumerate}[wide]Let $\A$ be an abelian category.
\item[(i)]If $\A$ is a Grothendieck category then so is $\Ch(\A)$.
\item[(ii)]If $\A$ has enough projectives then so does $\Ch(\A)$.
\end{enumerate}
\end{lem}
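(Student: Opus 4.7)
The plan is to base both parts on the same family of disk complexes. For $n\in\Z$ and $P\in\A$ let $D^n(P)$ denote the complex with $P$ placed in degrees $n$ and $n-1$, identity differential between them, and zero elsewhere. Unpacking the chain-map equations yields a natural bijection
\[
\Hom_{\Ch(\A)}(D^n(P),X)\cong\Hom_\A(P,X_n),
\]
because a chain map $D^n(P)\to X$ is determined by its degree-$n$ component, while its degree-$(n-1)$ component is forced to equal $d_n^X$ composed with that. Hence $D^n$ is left adjoint to the degree-$n$ evaluation functor $(-)_n:\Ch(\A)\to\A$, which is exact since kernels and cokernels in $\Ch(\A)$ are formed degreewise.

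For part (ii) the above adjunction immediately shows that $D^n(P)$ is projective in $\Ch(\A)$ whenever $P$ is projective in $\A$. Given an arbitrary $X\in\Ch(\A)$, I choose for each $n\in\Z$ a surjection $\pi_n:P_n\twoheadrightarrow X_n$ with $P_n$ projective in $\A$ and assemble the $\pi_n$ via the adjunction into a single morphism $\bigoplus_{n\in\Z}D^n(P_n)\to X$. This map is surjective in each degree because the relevant $\pi_n$ already is, and since surjectivity in $\Ch(\A)$ can be checked degreewise this produces the desired projective cover.

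For part (i) I first observe that $\Ch(\A)$ inherits the basic Grothendieck axioms from $\A$: it is abelian by the hypothesis on $\A$; small limits and colimits exist and are computed degreewise; and exactness of a sequence in $\Ch(\A)$ is exactness in each degree, so (AB5) for $\A$ gives (AB5) for $\Ch(\A)$. It remains to exhibit a generator. If $G$ generates $\A$, then I claim that $\bigoplus_{n\in\Z}D^n(G)$ generates $\Ch(\A)$: for any proper subcomplex $Y\hookrightarrow X$, one can pick an $n$ with $Y_n\subsetneq X_n$ and, by the generating property of $G$ in $\A$, a morphism $G\to X_n$ not factoring through $Y_n$; the adjunction promotes it to a morphism $D^n(G)\to X$ not factoring through $Y$.

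Neither part poses a genuine obstacle; the only piece deserving care is the differential-compatibility when writing down the assembled map in (ii), which forces the degree-$(n-1)$ component of $D^n(P_n)\to X$ to be $d_n^X\circ\pi_n$ so that everything commutes with the differentials in $X$.
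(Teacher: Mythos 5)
Your proposal is correct and follows essentially the same route as the paper: both parts rest on the disk complexes $D^n(P)$, with $\bigoplus_n D^n(U)$ as the generator in (i) and degreewise surjections $P_n\twoheadrightarrow X_n$ assembled into an epimorphism from $\bigoplus_n D^n(P_n)$ in (ii). The only cosmetic difference is that you deduce projectivity of $D^n(P)$ directly from the adjunction $D^n\dashv(-)_n$ with exact right adjoint, whereas the paper routes the construction through the adjunction with $\A^\Z$ and quotes the characterization of projective complexes as split acyclic termwise projective ones; your version is, if anything, slightly more self-contained.
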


\begin{proof}
The abelian category $\Ch(\A)$ is cocomplete by defining colimits term\-wise. Filtered colimits are exact because kernels and cokernels are defined termwise, as well. If $U$ is a generator of $\A$ consider the complex $D^nU=[0\to U\stackrel{\id}{\to}U\to 0]$ in $\Ch(\A)$ with $U$ in degrees $n$ and $n-1$. Then $\bigoplus_{n\in\Z}D^nU$ is a generator of $\Ch(\A)$. This shows (i).\\

As for (ii) let $\A^\Z$ denote the category of $\Z$-graded objects of $\A$ and consider the functor $T:\Ch(\A)\to\A^\Z$ given by $X_\bullet\mapsto \bigoplus_{n\in\Z}X_n$. It has the left adjoint $D$ sending $\bigoplus_{n\in\Z}X_n$ to $\bigoplus_{n\in\Z}D^nX_n$. In degree $n$ the map of complexes
\[
D^nX_n\hookrightarrow DTX_\bullet\stackrel{\varepsilon_{X_\bullet}}{\longrightarrow}X_\bullet
\]
is the identity on $X_n$. This implies that the counit $\varepsilon_{X_\bullet}:DTX_\bullet\to X_\bullet$ is an epimorphism. Therefore, given $X_\bullet\in\Ch(\A)$ it suffices to construct an epimorphism $f:Q_\bullet\to DTX_\bullet$ where $Q_\bullet\in\Ch(\A)$ is projective. For any $n$ we choose an epimorphism $Y_n\to X_n$ in $\A$ where $Y_n$ is projective. Consider the epimorphism $g:\bigoplus_{n\in\Z}Y_n\to\bigoplus_{n\in\Z}X_n$ in $\A^\Z$ and note that $\bigoplus_{n\in\Z}Y_n$ is a projective object in $\A^\Z$. We claim that the induced map
\[
f=Dg:Q_\bullet=D(\bigoplus_{n\in\Z}Y_n)\longrightarrow D(\bigoplus_{n\in\Z}X_n)=DTX_\bullet
\]
in $\Ch(\A)$ is as required. Indeed, $f$ is an epimorphism because $D$ is a left adjoint and so preserves epimorphisms. Moreover, $Q_\bullet$ is projective because the right adjoint $T$ is exact and hence $D$ preserves projectives.
\end{proof}

Recall that any Grothendieck category is automatically complete and therefore bicomplete. If $\A$ is a Grothendieck category there is a cofibrantly generated model structure on $\Ch(\A)$ for which the cofibrations are the monomorphisms and the weak equivalences are the quasi-isomorphisms (cf.\ \cite{Beke}, Proposition 3.13). This is called the {\it injective model structure} on $\Ch(\A)$. We write $\Ch(\A)^\Inj$ for the corresponding model category. Its homotopy category is equivalent to the unbounded derived category $D(\A)\cong\Ho(\Ch(\A)^\Inj)$ of $\A$.\\

If $\A$ is the category of modules over a ring then the injective model structure on $\Ch(\A)$ is also constructed in \cite{HovBook}, \S2.3. It follows as in \cite{HovBook}, Proposition 2.3.20, that the fibrations are precisely the termwise split surjections with fibrant kernels. Looking at the long exact homology sequence we see that the trivial fibrations are precisely the termwise split surjections with trivially fibrant kernels. Consequently, the injective model structure on $\Ch(\A)$ is compatible with the class $\mathcal{P}$ of all short exact sequences.\\

By Theorem \ref{cotorsion_pairs} the injective model structure is related to two functorially complete cotorsion pairs on $\Ch(\A)$ given as follows. Note that every object of $\Ch(\A)^\Inj$ is cofibrant, i.e.\ we have $\C=\Ch(\A)^\Inj_c=\Ch(\A)$. Moreover, the class $\T=\Ch(\A)^\Inj_t$ of trivial objects is precisely that of the acyclic complexes. If we denote by $\F=\Ch(\A)^\Inj_f$ the class of fibrant objects then we know from Theorem \ref{cotorsion_pairs} that $\F=\T^\perp$. In other words, a complex $Y$ is fibrant if and only if $\Ext_{\Ch(\A)}^1(X,Y)=0$ for any acyclic complex $X$. As in \cite{EJX}, Proposition 3.4, one shows that this condition is satisfied if and only if $Y$ is a {\it dg-injective} complex, i.e.\ $Y$ is termwise injective and the internal hom complex $\underline{\Hom}(X,Y)$ is acyclic for any acyclic complex $X$.\\

Apart from the cotorsion pair $(\C\cap\T,\F)=(\T,\T^\perp)$ there is also the cotorsion pair $(\C,\T\cap\F)=(\Ch(\A),\T\cap\T^\perp)$. As in \cite{EJX}, Proposition 3.7, the class $\T\cap\T^\perp$ is that of the {\it injective} complexes, i.e.\ of the injective objects of $\Ch(\A)$. Recall that these are precisely the split acyclic complexes which are termwise injective.

\begin{rem}\label{Spaltenstein}
There is also the notion of a {\it $K$-injective} complex in the sense of Spaltenstein (cf.\ \cite{Spa}, page 124). Recall that a complex $Y\in\Ch(\A)$ is $K$-injective if and only if the complex $\underline{\Hom}(X,Y)$ is acyclic for any acyclic complex $X$. As mentioned above, a complex is dg-injective if and only if it is $K$-injective and termwise injective. Consequently, many of Spaltenstein's results in \cite{Spa} can be reproved using the injective model structure on $\Ch(\A)$. For example, the fibrant replacement $Y\to Q_fY$ of $Y\in\Ch(\A)$ is an injective quasi-isomorphism such that $Q_fY$ is dg-injective.
\end{rem}

Under suitable assumptions on $\A$ there is a projective version of the above results. For our purposes it will be sufficient to stick to the classical case that $\A=\Mod(S)$ is the category of left modules over a ring $S$. In this case we write $\Ch(S)=\Ch(\Mod(S))$. According to \cite{HovBook}, Theorem 2.3.11, there is a cofibrantly generated model structure on $\Ch(S)$ for which the fibrations are the surjections and the weak equivalences are the quasi-isomorphisms. This is called the {\it projective model structure} on $\Ch(S)$. We write $\Ch(S)^\Proj$ for the corresponding model category. Its homotopy category is again equivalent to the unbounded derived category $D(S)\cong\Ho(\Ch(S)^\Proj)$ of $\Mod(S)$.\\

By \cite{HovBook}, Proposition 2.3.9, the cofibrations are precisely the termwise split monomorphisms with cofibrant cokernel. It follows that the projective model structure on $\Ch(S)$ is compatible with the class $\mathcal{P}$ of all short exact sequences. By Theorem \ref{cotorsion_pairs} it is related to two functorially complete cotorsion pairs on $\Ch(S)$. Note that every object of $\Ch(S)^\Proj$ is fibrant, i.e.\ we have $\F=\Ch(S)^\Proj_f=\Ch(S)$. Moreover, the class $\T=\Ch(S)^\Proj_t$ of trivial objects is precisely that of the acyclic complexes. If we denote by $\C=\Ch(S)^\Proj_c$ the class of cofibrant objects then we know from Theorem \ref{cotorsion_pairs} that $\C={^\perp\T}$. In other words, a complex $X$ is cofibrant if and only if $\Ext_{\Ch(S)}^1(X,Y)=0$ for any acyclic complex $Y$. As in \cite{EJX}, Proposition 3.5, one shows that this condition is satisfied if and only if $X$ is a {\it dg-projective} complex, i.e.\ $X$ is termwise projective and the complex $\underline{\Hom}(X,Y)$ is acyclic for any acyclic complex $Y$.\\

Apart from the cotorsion pair $(\C,\T\cap\F)=({^\perp\T},\T)$ there is also the cotorsion pair $(\C\cap\T,\F)=({^\perp\T}\cap\T,\Ch(S))$. As in \cite{EJX}, Proposition 3.7, the class ${^\perp\T}\cap\T$ is that of the {\it projective} complexes, i.e.\ of the projective objects of $\Ch(S)$. Recall that these are precisely the split acyclic complexes which are termwise projective. In Spaltenstein's terminology a complex is dg-projective if and only if it is $K$-projective and termwise projective.\\

Finally, it is clear from the above that $\id:\Ch(S)^\Proj\rightleftarrows\Ch(S)^\Inj:\id$ is a Quillen adjunction. Of course, it is even a Quillen equivalence because the classes of weak equivalences coincide. For both the projective and the injective model structure on $\Ch(S)$ the suspension $\Sigma=L[-1]$ is the left derived functor of the shift functor $[-1]$. Likewise, the loop functor $\Omega=R[1]$ is the right derived functor of the shift $[1]$. In a different setting an anologous result will be proven in Lemma \ref{explicit_loop_susp} below.


\section{Derived categories and relative homological algebra}\label{section_2}

%
%


Recall the following notions from relative homological algebra (cf.\ \cite{ChHov}, \S1). We continue to denote by $\A$ an abelian category and fix a class $\mathcal{P}$ of objects of $\A$. Given any object $P$ of $\A$ we say that a morphism $f:A\to B$ in $\A$ is \emph{$P$-epic} or a \emph{$P$-epimorphism} if the induced map
$$
\Hom_{\A}(P,A)\to \Hom_{\A}(P,B)
$$
is surjective. Furthermore, we say that $f$ is \emph{$\mathcal{P}$-epic} if it is $P$-epic for all $P\in\mathcal{P}$. A sequence $A\overset{f}{\longrightarrow}B\overset{g}{\longrightarrow}C$ is called \emph{$P$-exact} if $gf=0$ and if the induced sequence $\Hom_{\A}(P,A)\to\Hom_{\A}(P,B)\to \Hom_{\A}(P,C)$ of abelian groups is exact. The sequence $A\to B\to C$ is called $\mathcal{P}$-exact if it is $P$-exact for all $P\in\mathcal{P}$.

\begin{defin}\label{proj_class} A \emph{projective class} on $\A$ is a pair $(\mathcal{P},\mathcal{E})$ where $\mathcal{P}$ is a class of objects of $\A$ and $\mathcal{E}$ is a class of morphisms in $\A$ such that the following three conditions are satisfied.
\begin{enumerate}
\item[(i)] $\mathcal{E}$ is the class of all $\mathcal{P}$-epimorphisms.
\item[(ii)]  $\mathcal{P}$ is the class of all objects $P$ of $\A$ such that every morphism in $\mathcal{E}$ is $P$-epic.
\item[(iii)] For each object $B$ in $\A$ there is a morphism $f:P\to B$ in $\A$ with $P\in\mathcal{P}$ and $f\in\mathcal{E}$.
\end{enumerate}
\end{defin}

\begin{ex}\label{standard_proj_class} For any ring $S$ let $\mathcal{P}$ be the class of all projective $S$-modules and let $\mathcal{E}$ be the class of all surjective $S$-linear maps. Then the pair $(\mathcal{P},\mathcal{E})$ is a projective class on $\A=\Mod(S)$ called the {\it standard projective class}.
\end{ex}

A {\it $\mathcal{P}$-resolution} of an object $B\in\A$ is a $\mathcal{P}$-exact sequence $P_\bullet\to B\to 0$ in $\A$ such that $P_n\in\mathcal{P}$ for all $n\geq 0$. Given another object $C\in\A$ the group of relative extensions of $C$ and $B$ are defined by
\begin{equation}\label{relative_Ext}
\Ext_\mathcal{P}^n(B,C)=H^n\Hom_\A(P_\bullet,C)\quad\mbox{for all }n\geq 0.
\end{equation}
Up to isomorphism they are independent of the chosen $\mathcal{P}$-resolution of $B$ and any short $\mathcal{P}$-exact sequence $0\to B'\to B\to B''\to 0$ gives rise to a long exact sequence between relative extension groups, as usual.

\begin{rem}\label{relative_ext_ambiguous}
In spite of the notational similarities the relative extension groups in (\ref{relative_Ext}) are not to be confused with those defining cotorsion pairs. In fact, we shall see in examples below that short $\mathcal{P}$-exact sequences are generally not exact in the usual sense and hence do not belong to any proper class of short exact sequences in $\A$. 
\end{rem}

Suppose now that we have an adjunction $F:\mathcal{B}\rightleftarrows\A:U$ between abelian categories. Moreover, assume that $(\mathcal{P}',\mathcal{E}')$ is a projective class on $\mathcal{B}$. Recall that a \emph{retract} of an object $B\in\A$ is an object $A\in\A$ admitting a right invertible morphism $B\to A$. Since $\A$ is abelian this is equivalent to $A$ being isomorphic to a direct summand of $B$. We define the \emph{pullback} of the pair $(\mathcal{P}',\mathcal{E}')$ along $U$ to be the pair $(\mathcal{P},\mathcal{E})$ where
\begin{itemize}
\item $\mathcal{P}=\{\text{retracts of }FP\;|\;P\in \mathcal{P}'\}$ and
\item $\mathcal{E}=\{f:B\to C\;|\;Uf\in\mathcal{E}'\}$.
\end{itemize}

The proof of the following result is straightforward and left to the reader.

\begin{lem}\label{pullback} With the above notation, the pullback $(\mathcal{P},\mathcal{E})$ is a projective class on $\A$. Moreover, a sequence $A\to B\to C$ in $\A$ is $\mathcal{P}$-exact if and only if it is a complex and the sequence $UA\to UB\to UC$ in $\mathcal{B}$ is $\mathcal{P}'$-exact.\hfill$\Box$
\end{lem}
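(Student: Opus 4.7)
The plan is to reduce everything to a single observation about the adjunction plus a retract argument. Specifically, the natural isomorphism $\Hom_\A(FP,X)\cong\Hom_\mathcal{B}(P,UX)$ shows that a morphism $f$ in $\A$ is $FP$-epic if and only if $Uf$ is $P$-epic, and similarly that the $\Hom$-sequence attached to $A\to B\to C$ is exact in the middle for $FP$ if and only if the one attached to $UA\to UB\to UC$ is exact in the middle for $P$. A standard retract argument (lifting through a section of the retraction) shows that both of these properties pass from $P$ to any retract of $P$, so they only need to be checked on the generating objects $FP$ with $P\in\mathcal{P}'$.

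Granted these two reductions, the three axioms of Definition~\ref{proj_class} are essentially forced. For axiom~(i), $f\in\mathcal{E}$ iff $Uf\in\mathcal{E}'$, iff $f$ is $FP$-epic for all $P\in\mathcal{P}'$, iff $f$ is $\mathcal{P}$-epic. For axiom~(iii), given $B\in\A$ I would pick $g:P\to UB$ with $P\in\mathcal{P}'$ and $g\in\mathcal{E}'$ using axiom~(iii) for $(\mathcal{P}',\mathcal{E}')$, and check that the adjoint $\hat{g}:FP\to B$ lies in $\mathcal{E}$: by the triangle identity $U\hat{g}\circ\eta_P=g$, any lift $\chi:Q\to P$ of $\phi:Q\to UB$ along $g$ produces a lift $\eta_P\circ\chi:Q\to UFP$ of $\phi$ along $U\hat{g}$. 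Axiom~(ii) is the step I expect to be the main obstacle, since it requires going from an abstract lifting property back to a concrete retract: if $Q\in\A$ is such that every morphism in $\mathcal{E}$ is $Q$-epic, I would apply this condition to the specific morphism $\hat{g}:FP\to Q$ furnished by axiom~(iii) with $B=Q$, and lift $\id_Q$ along $\hat{g}$ to realize $Q$ as a retract of $FP$, placing it in $\mathcal{P}$; the reverse inclusion is already contained in axiom~(i).

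For the exactness statement, the same two reductions apply. By definition, $\mathcal{P}$-exactness of $A\to B\to C$ is the conjunction of $gf=0$ with exactness in the middle of $\Hom_\A(Q,-)$ for every $Q\in\mathcal{P}$. The retract reduction lets me replace all $Q\in\mathcal{P}$ by the generators $FP$ with $P\in\mathcal{P}'$, and the adjunction then converts the resulting condition into exactness in the middle of $\Hom_\mathcal{B}(P,U-)$ for every $P\in\mathcal{P}'$. Since $gf=0$ in $\A$ automatically yields $Ug\circ Uf=0$ in $\mathcal{B}$, this combined with the complex condition on $A\to B\to C$ is precisely the statement that the sequence is a complex and its image under $U$ is $\mathcal{P}'$-exact.
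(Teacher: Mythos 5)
Your argument is correct and complete; the paper itself omits the proof (declaring it ``straightforward and left to the reader''), and your two reductions --- the adjunction isomorphism $\Hom_\A(FP,X)\cong\Hom_{\mathcal{B}}(P,UX)$ identifying $FP$-epimorphisms/exactness with $P$-epimorphisms/exactness after applying $U$, and the passage to retracts --- together with the lift of $\id_Q$ along the $\mathcal{E}$-morphism $FP\to Q$ for axiom (ii), constitute exactly the intended verification.
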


For the rest of this article we fix a field $k$ of positive characteristic $p$ and denote by $G$ a topological group admitting an open subgroup $I$ which is pro-$p$. Let $\Rep^\infty_k(G)$ denote the category of {\it smooth} $k$-linear representations of $G$, i.e.\ of $k$-vector spaces endowed with a $k$-linear action of $G$ such that the stabilizer of any vector is open in $G$. Its morphisms are the $k$-linear and $G$-equivariant maps. Given $V,W\in\Rep^\infty_k(G)$ we will write $\Hom_G(V,W)$ for the set of morphisms from $V$ to $W$ in $\Rep^\infty_k(G)$. We explicitly allow $G$ to be discrete. In this case the category $\Rep^\infty_k(G)=\Mod(k[G])$ is that of all $k$-linear representations of $G$.\\

Given $V\in\Rep^\infty_k(I)$ we denote by $\ind_I^G(V)\in\Rep^\infty_k(G)$ the {\it compact induction of $V$} from $I$ to $G$, i.e.\ the $k$-vector space of all compactly supported maps $f:G\to V$ satisfying $f(gi)=i^{-1}f(g)$ for all $i\in I$ and $g\in G$. The $G$-action on $\ind_I^G(V)$ is given by $(gf)(g')=f(g^{-1}g')$ for all $g,g'\in G$. We endow $k$ with the trivial action of $I$ and set $\XX =\ind_I^G(k)\in\Rep^\infty_k(G)$.\\

Setting $H=\End_G(\XX )^\op$ Frobenius reciprocity shows that we have adjoint functors
\begin{equation}\label{Frobenius_reciprocity}
F:\Mod(H)\rightleftarrows \Rep_k^\infty(G):U
\end{equation}
given by $FM=\XX \otimes_HM$ and $UV=V^I\cong\Hom_G(\XX ,V)$. Since $k$ has characteristic $p$ the functor $U$ has the property that $UV\neq 0$ for all $0\neq V\in\Rep_k^\infty(G)$  (cf.\ \cite{Sch}, Lemma 1 (v)). Since $U$ is also left exact it reflects monomorphisms.\\

For $V\in \Rep_k^\infty(G)$ the counit of the adjunction (\ref{Frobenius_reciprocity}) is the $G$-equivariant map
\begin{equation}\label{counit}
\varepsilon_V: FUV=\XX \otimes_H V^I \to V,\quad
1_{gI}\otimes v \mapsto g v,
\end{equation}
where $1_{gI}$ denotes the characteristic function of the coset $gI$. Moreover, given $M\in \Mod(H)$ the unit $\eta_M$ is the $H$-linear map
\begin{equation}\label{unit}
\eta_M:M\to UFM=(\XX \otimes_H M)^I,\quad
m\mapsto 1_I\otimes m.
\end{equation}

\begin{lem}\label{unit_proj}
For any projective $H$-module $P$ the unit $\eta_P$ is an isomorphism.
\end{lem}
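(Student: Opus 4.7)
The plan is to reduce the claim to the case $P = H$, then extend to free $H$-modules by additivity, and finally to projective modules by the retract property.

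The first step is to verify directly that the unit $\eta_H$ is an isomorphism. In this case we have $FH = \XX \otimes_H H \cong \XX$, and the formula (\ref{unit}) says $\eta_H(h) = 1_I \otimes h$, which under the isomorphism $\XX \otimes_H H \cong \XX$ corresponds to $\psi_h(1_I)$, where $\psi_h \in \End_G(\XX)$ is the endomorphism attached to $h \in H = \End_G(\XX)^\op$. On the other hand, the standard Frobenius-reciprocity isomorphism $\XX^I \cong \Hom_G(\XX,\XX)$ sends $\phi$ to $\phi(1_I)$. Composing these identifications, $\eta_H : H \to UFH$ becomes the canonical bijection $h \mapsto \psi_h$ between $H$ and $\End_G(\XX)^\op$, which is an $H$-module isomorphism once one carefully unwinds the left $H$-module structure on $\Hom_G(\XX,\XX)$ given by $h \cdot \phi = \phi \circ \psi_h$.

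Next, I would observe that both $F$ and $U$ preserve arbitrary direct sums. The functor $F$ does so automatically as a left adjoint. For $U$, an element of $\bigoplus_{s \in S} V_s$ has only finitely many non-zero components and is $I$-fixed if and only if each of its components is, so $U(\bigoplus_s V_s) = \bigoplus_s UV_s$. By naturality, it follows that for any set $S$ the unit $\eta_{H^{(S)}}$ is the direct sum of copies of $\eta_H$, hence an isomorphism by the previous step. Therefore $\eta_L$ is an isomorphism for every free $H$-module $L$.

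Finally, if $P$ is a projective $H$-module, choose $P'$ and a free module $L$ with $L \cong P \oplus P'$. By naturality of $\eta$ the isomorphism $\eta_L$ decomposes as $\eta_P \oplus \eta_{P'}$ under this direct sum splitting, so $\eta_P$ is a retract of the isomorphism $\eta_L$ and must itself be an isomorphism. The only delicate point in the whole argument is the bookkeeping in the first step: one has to be careful that the natural identifications $\XX \otimes_H H \cong \XX$ and $\XX^I \cong H$ are compatible both with the respective $H$-actions and with the defining formula $\eta_H(h) = 1_I \otimes h$. Once this is settled the reduction via free and projective modules is purely formal.
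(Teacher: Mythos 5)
Your proof is correct and follows the same route as the paper, which simply states that $\eta_H$ is immediately seen to be an isomorphism and then passes to arbitrary projectives by additivity of $F$ and $U$. Your version merely spells out the two points the paper leaves implicit — the identification $H\cong\XX^I$, $h\mapsto 1_I\cdot h$, via Frobenius reciprocity, and the preservation of arbitrary (not just finite) direct sums needed for non-finitely-generated projectives — so there is nothing to add.
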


\begin{proof}
It is immediate that $\eta_H$ is an isomorphism. Since $U$ and $F$ are additive functors and preserve arbitrary direct sums, $\eta_P$ is an isomorphism for any projective $H$-module $P$. 
\end{proof}

As seen in Example \ref{standard_proj_class} we have the standard projective class $(\mathcal{P}',\mathcal{E}')$ on $\Mod(H)$ where $\mathcal{P}'$ is the class of all projective $H$-modules and $\mathcal{E}'$ is the class of all surjective $H$-linear maps.

\begin{defin}\label{I_proj} We define the \emph{$I$-projective class} on $\Rep_k^\infty(G)$ to be the pullback $(\mathcal{P},\mathcal{E})$ of the standard projective class $(\mathcal{P}',\mathcal{E}')$ along $U$. Explicitly,
\begin{itemize}
\item[(i)] $V\in\mathcal{P}$ if and only if there is a split surjection $FP\to V$ for some projective $H$-module $P$ and
\item[(ii)] $f:V\to W$ is in $\mathcal{E}$ if and only if $Uf:UV\to UW$ is surjective.
\end{itemize}
We say that $V\in \Rep_k^\infty(G)$ is \emph{$I$-projective} if $V\in \mathcal{P}$ and that a $G$-equivariant map $f:V\to W$ is an \emph{$I$-epimorphism} if $f\in \mathcal{E}$. Furthermore, we say that $V\in \Rep_k^\infty(G)$ is \emph{$I$-free} if $V\cong FY$ for some free $H$-module $Y$. These are precisely the $G$-representations isomorphic to direct sums of copies of $\XX $. A sequence $V_1\to V_2\to V_3$ in $\Rep_k^\infty(G)$ is called \emph{$I$-exact} if it is $\mathcal{P}$-exact, i.e.\ if it is a complex and $UV_1\to UV_2\to UV_3$ is an exact sequence of $H$-modules (cf.\ Lemma \ref{pullback}). In this context $\mathcal{P}$-resolutions will also be called {\it $I$-resolutions}. Finally, we say that $V\in\Rep^\infty_k(G)$ is {\it generated by its $I$-invariants} if $V=k[G]\cdot V^I=k[G]\cdot UV$ inside $V$. Since $k[G]\cdot UV=\im(\varepsilon_V)$ this is true if and only if the counit $\varepsilon_V:FUV\to V$ is surjective.
\end{defin}

\begin{lem}\label{I_exact}
Let $V_1\stackrel{f}{\to}V_2\stackrel{g}{\to}V_3$ be an $I$-exact sequence in $\Rep^\infty_k(G)$. Then $\im(Uf)=U\im(f)=U\ker(g)=\ker(Ug)$.
\end{lem}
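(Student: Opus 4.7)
The plan is to establish the chain of inclusions
\[
\im(Uf)\;\subseteq\;U\im(f)\;\subseteq\;U\ker(g)\;=\;\ker(Ug)\;=\;\im(Uf)
\]
as subobjects of $UV_2$ and then to conclude that all four must coincide.

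First I would exploit the fact that $U$, being a right adjoint in (\ref{Frobenius_reciprocity}), is left exact and hence preserves kernels and monomorphisms. Applying $U$ to the canonical monomorphism $\ker(g)\hookrightarrow V_2$ therefore identifies $U\ker(g)$ with $\ker(Ug)$ as a subobject of $UV_2$, which disposes of the third equality. The fourth equality $\ker(Ug)=\im(Uf)$ is immediate from the definition of $I$-exactness given in Definition \ref{I_proj}.

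Next I would use the epi-mono factorization $V_1\twoheadrightarrow\im(f)\hookrightarrow V_2$ of $f$. Since $U$ preserves the monomorphism, applying $U$ gives a factorization of $Uf$ as $UV_1\to U\im(f)\hookrightarrow UV_2$. Factoring this further through its image shows that $\im(Uf)$ sits inside $U\im(f)$ as a subobject of $UV_2$, yielding the first inclusion. Finally, since $gf=0$ the subobject $\im(f)$ of $V_2$ is contained in $\ker(g)$; applying $U$ and using left exactness once more yields $U\im(f)\subseteq U\ker(g)$, which is the second inclusion.

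Composing these containments with the two established equalities closes the loop, so every inclusion must be an equality. I do not expect any genuine obstacle: the entire statement is a formal consequence of the left exactness of $U$ together with the definition of an $I$-exact sequence via Lemma \ref{pullback}.
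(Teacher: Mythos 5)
Your proposal is correct and follows essentially the same argument as the paper: the paper's proof is precisely the chain $\ker(Ug)=\im(Uf)\subseteq U\im(f)\subseteq U\ker(g)=\ker(Ug)$, using $I$-exactness for the first equality and the fact that the right adjoint $U$ commutes with kernels for the last. Your more detailed justification of the inclusion $\im(Uf)\subseteq U\im(f)$ via the epi-mono factorization of $f$ is exactly what the paper leaves implicit.
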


\begin{proof}
Since $\im(f)\subseteq\ker(g)$ and the right adjoint $U$ commutes with kernels the $I$-exactness gives $\ker(Ug) = \im(Uf)\subseteq U\im(f)\subseteq U\ker(g)=\ker(Ug)$.
\end{proof}

We use the unit $\eta_H$ to identify $H\overset{\cong}{\longrightarrow}UFH\cong\XX ^I$ as $(H,H)$-bimodules. Explicitly, this map is given by $h\mapsto 1_I \cdot h$. The terminology in Definition \ref{I_proj} is justified by the following result.

\begin{lem}\label{summand_I_proj} For $V\in \Rep_k^\infty(G)$ the following are equivalent.
\begin{enumerate}
\item[(i)] $V$ is $I$-projective.
\item[(ii)] $V$ is a direct summand of an $I$-free $G$-representation.
\item[(iii)] $UV$ is a projective $H$-module and $V$ is generated by its $I$-invariants.
\end{enumerate}
Under these conditions the counit $\varepsilon_V:FUV\to V$ is an isomorphism.
\end{lem}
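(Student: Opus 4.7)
The plan is to establish the cycle (i) $\Rightarrow$ (ii) $\Rightarrow$ (iii) $\Rightarrow$ (i), and to extract the final assertion that $\varepsilon_V$ is an isomorphism as a byproduct of the last implication. All three steps rest on Lemma \ref{unit_proj} together with two facts recalled above: the left adjoint $F$ preserves arbitrary direct sums and retracts, while the right adjoint $U$ detects nonzero objects in the sense that $UW \neq 0$ for every $W \neq 0$.

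For (i) $\Rightarrow$ (ii), write the projective $H$-module $P$ appearing in the definition of $I$-projectivity as a retract of a free module $H^{(J)}$. Since $F$ preserves retracts, $FP$ is a retract of $FH^{(J)} \cong \XX^{(J)}$, and composing with the given split surjection $FP \to V$ realizes $V$ as a summand of the $I$-free representation $\XX^{(J)}$. For (ii) $\Rightarrow$ (iii), apply the additive functor $U$ to a direct sum decomposition $V \oplus V' \cong \XX^{(J)} = FH^{(J)}$; Lemma \ref{unit_proj} identifies $UFH^{(J)}$ with $H^{(J)}$, so $UV$ is a summand of a free $H$-module and therefore projective. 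For the generation statement, observe that $\XX$ is generated as a $G$-representation by $1_I \in \XX^I$ in view of $1_{gI} = g \cdot 1_I$; this persists for $\XX^{(J)}$, and projecting the equality $\XX^{(J)} = k[G] \cdot (\XX^{(J)})^I$ onto the summand $V$ yields $V = k[G] \cdot V^I$.

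For (iii) $\Rightarrow$ (i) together with the counit claim, note that since $UV$ is projective Lemma \ref{unit_proj} makes $\eta_{UV}$ an isomorphism, and the triangle identity $U\varepsilon_V \circ \eta_{UV} = \id_{UV}$ then forces $U\varepsilon_V$ to be an isomorphism as well. Because $V$ is generated by its $I$-invariants the counit $\varepsilon_V$ is already surjective, so applying the left exact functor $U$ to the short exact sequence $0 \to \ker(\varepsilon_V) \to FUV \to V \to 0$ gives $U\ker(\varepsilon_V) = 0$; the property that $U$ detects nonzero objects then forces $\ker(\varepsilon_V) = 0$, and hence $\varepsilon_V$ is an isomorphism. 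In particular $V \cong F(UV)$ with $UV$ projective, realizing $V$ as $I$-projective and proving the final assertion of the lemma at the same time.

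The only delicate point lies in this last paragraph, where one must combine the generation hypothesis (to get surjectivity of $\varepsilon_V$) with the faithfulness-like property of $U$ (to get injectivity) in order to upgrade the isomorphism $U\varepsilon_V$ to the isomorphism $\varepsilon_V$ itself; the remaining implications are formal manipulations with retracts and the additivity of $F$ and $U$.
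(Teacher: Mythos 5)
Your proof is correct and follows essentially the same route as the paper: (i)$\Rightarrow$(ii) via $F$ preserving retracts of free modules, (ii)$\Rightarrow$(iii) via Lemma \ref{unit_proj} and stability of generation under quotients, and (iii)$\Rightarrow$(i) by showing $U\varepsilon_V$ is an isomorphism and then using surjectivity from the generation hypothesis together with the fact that $U$ reflects monomorphisms. Your final paragraph merely unpacks "reflects monomorphisms" into its proof (left exactness plus $UW\neq 0$ for $W\neq 0$), which is exactly how the paper justifies that property earlier in the section.
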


\begin{proof}
If $V$ is $I$-projective then by definition $V$ is a direct summand of $FP$ for some projective $H$-module $P$. Since $P$ is a direct summand of a free $H$-module it follows that $FP$ and hence $V$ is a direct summand of an $I$-free representation. That (ii) implies (iii) follows from Lemma \ref{unit_proj} and because the property of being generated by $I$-invariants is stable under quotients.\\

Finally, if $UV$ is projective then $FUV$ is $I$-projective by definition. By Lemma \ref{unit_proj} the unit $\eta_{UV}$ is an isomorphism. It always has the splitting $U\varepsilon_V$. Thus, we see that the counit $FUV\to V$ induces an isomorphism on $I$-invariants. Since the functor $U$ reflects monomorphisms it follows that $FUV\to V$ is injective with image equal to the subrepresentation of $V$ generated by $UV$. This shows that (iii) implies (i) and that in this case the counit is an isomorphism.
\end{proof}

\begin{cor}\label{proj_equiv} The functors $F$ and $U$ induce inverse equivalences of categories between the full subcategories of projective $H$-modules and of $I$-projective $G$-representations, respectively.
\end{cor}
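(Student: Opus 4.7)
The plan is to verify that the restrictions are well-defined on the stated subcategories and then invoke the unit/counit isomorphisms provided by the previous two lemmas. Since the adjunction $F\dashv U$ is already in place, the only remaining work is to check that the restricted functors land in the correct subcategories and that the unit and counit become natural isomorphisms there.

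First, I would check that $F$ sends projective $H$-modules to $I$-projective $G$-representations. If $P$ is projective, then $P$ is a direct summand of a free $H$-module, so $FP$ is a direct summand of an $I$-free $G$-representation (since $F$ is additive and commutes with direct sums as a left adjoint); by Lemma \ref{summand_I_proj} (ii) this means $FP$ is $I$-projective. Equally directly, $\id: FP \to FP$ is a split surjection from $F$ applied to a projective module, so $FP \in \mathcal{P}$ by the explicit description in Definition \ref{I_proj} (i). In the other direction, if $V$ is $I$-projective then Lemma \ref{summand_I_proj} (iii) says that $UV$ is a projective $H$-module. So $F$ and $U$ restrict to functors between the two full subcategories.

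Next I would assemble the natural isomorphisms. For any projective $H$-module $P$, Lemma \ref{unit_proj} asserts that the unit $\eta_P: P \to UFP$ is an isomorphism, and naturality of $\eta$ in $\Mod(H)$ restricts to naturality on the subcategory of projectives. For any $I$-projective $V$, the last sentence of Lemma \ref{summand_I_proj} asserts that the counit $\varepsilon_V: FUV \to V$ is an isomorphism, and again naturality of $\varepsilon$ restricts to the subcategory of $I$-projective $G$-representations. The pair $(\eta, \varepsilon)$ restricted to these subcategories therefore consists of natural isomorphisms, which by the standard criterion exhibits $F$ and $U$ as inverse equivalences of categories between $\Proj(H)$ and the full subcategory of $I$-projective objects of $\Rep^\infty_k(G)$.

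There is no substantial obstacle: the entire content is packaged in Lemma \ref{unit_proj} and Lemma \ref{summand_I_proj}, and the corollary is really just the observation that an adjunction whose unit and counit are natural isomorphisms on prescribed subcategories restricts to an equivalence on those subcategories. The only minor care needed is to confirm that the image of each functor lies in the intended subcategory, which follows immediately from the characterizations in Lemma \ref{summand_I_proj}.
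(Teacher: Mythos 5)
Your proposal is correct and follows essentially the same route as the paper: both check that the restricted functors land in the right subcategories (using Definition \ref{I_proj} and Lemma \ref{summand_I_proj} (iii)) and then conclude from the fact that the unit is an isomorphism on projectives (Lemma \ref{unit_proj}) and the counit is an isomorphism on $I$-projectives (Lemma \ref{summand_I_proj}).
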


\begin{proof}
Let $V\in\Rep_k^\infty(G)$ be $I$-projective and let $P\in\Mod(H)$ be projective. By definition $FP$ is $I$-projective. Moreover, $UV=V^I$ is projective by Lemma \ref{summand_I_proj}. Thus, the two functors give a well-defined adjunction between the two full subcategories. Since the unit $\eta_P$ and the counit $\varepsilon_V$ are isomorphisms by Lemma \ref{unit_proj} and Lemma \ref{summand_I_proj} the statement follows.
\end{proof}

Given $V\in\Rep^\infty_k(G)$, Lemma \ref{pullback} guarantees the existence of an $I$-epi\-mor\-phism $W\to V$ where $W$ is $I$-projective. This can be constructed in a both explicit and functorial way by considering the $H$-linear surjection $H\otimes_kUV\to UV$. Applying the functor $F=\XX \otimes_H(\cdot)$ and composing with the counit $\varepsilon_V$ gives the functorial $G$-equivariant map $\XX \otimes_kUV\to V$. Here the left hand side is even $I$-free. Moreover, on $I$-invariants we get back the surjection $H\otimes_kUV\to UV$ we started with. Thus, the map $\XX \otimes_kUV\to V$ is an $I$-epimorphism. Its image is the $G$-subrepresentation of $V$ generated by $UV=V^I$. Iterating this process with the kernel of $\XX \otimes_kUV\to V$ we see that any representation admits a functorial $I$-resolution.\\  

Let $(\mathcal{P}, \mathcal{E})$ be a projective class on an abelian category $\A$. As in \cite{ChHov}, Definition 2.1, there is a candidate for an associated model structure on the category $\Ch(\A)$ of unbounded chain complexes over $\A$. If this model structure is well-defined we will call it the \emph{$\mathcal{P}$-projective model structure}. For the $I$-projective class on $\Rep^\infty_k(G)$ the definition is as follows. For ease of notation we set $\Ch(G)=\Ch(\Rep^\infty_k(G))$ and write $\Hom_G(V_\bullet,W_\bullet)$ for the set of morphisms in this category.

\begin{defin} \label{I_proj_model_structure}A map $f: V_\bullet\to W_\bullet$ in $\Ch(G)$ is an \emph{$I$-equivalence} if the induced map $\Hom_G(P, V_\bullet)\to\Hom_G(P, W_\bullet)$ is a quasi-isomorphism of complexes of abelian groups for any $I$-projective $P$. Similarly, $f$ is an \emph{$I$-fibration} if $\Hom_G(P, V_\bullet)\to\Hom_G(P, W_\bullet)$ is an epimorphism of complexes of abelian groups for any $I$-projective $P$. Finally, $f$ is an \emph{$I$-cofibration} if $f$ has the left lifting property with respect to all $I$-trivial fibrations, i.e.\ with respect to all maps that are both $I$-fibrations and $I$-equivalences.
\end{defin}

Note that the adjunction $F:\Mod(H)\rightleftarrows\Rep^\infty_k(G):U$ extends to an adjunction $F:\Ch(H)\rightleftarrows\Ch(G):U$ between the respective categories of complexes by working termwise.

\begin{lem}\label{proj_is_transfer}
A map $f: V_\bullet\to W_\bullet$ in $\Ch(G)$ is an $I$-equivalence (resp.\ an $I$-fibration) if and only if $Uf: UV_\bullet\to UW_\bullet$ is a quasi-isomorphism (resp.\ an epimorphism).
\end{lem}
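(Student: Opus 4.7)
The plan is to unwind the definitions using the adjunction $F \dashv U$ together with the structural description of $I$-projectives provided by Lemma \ref{summand_I_proj}. The central identification is that for any $I$-projective $P$ one has a natural isomorphism of complexes of abelian groups
\[
\Hom_G(P, V_\bullet) \;\cong\; \Hom_G(FUP, V_\bullet) \;\cong\; \Hom_H(UP, UV_\bullet),
\]
where $UP$ is a projective $H$-module and the first isomorphism uses that the counit $\varepsilon_P: FUP\to P$ is an isomorphism by Lemma \ref{summand_I_proj}. The specialization $P=\XX$ gives $\Hom_G(\XX,V_\bullet)\cong UV_\bullet$ since $UP = U\XX\cong H$.

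For the fibration statement, the forward direction is immediate: if $f$ is an $I$-fibration, testing against $P=\XX$ shows that $UV_n\to UW_n$ is surjective for every $n$, so $Uf$ is a termwise surjection. Conversely, assume $Uf$ is surjective. For arbitrary $I$-projective $P$, the display above turns $\Hom_G(P, V_n) \to \Hom_G(P, W_n)$ into $\Hom_H(UP, UV_n)\to \Hom_H(UP, UW_n)$, which is surjective because $UP$ is projective and $\Hom_H(UP,-)$ is exact.

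For the equivalence statement the strategy is the same. If $f$ is an $I$-equivalence, the case $P=\XX$ of the identification shows that $Uf: UV_\bullet\to UW_\bullet$ is a quasi-isomorphism. Conversely, suppose $Uf$ is a quasi-isomorphism and let $P$ be $I$-projective. The key observation is that for any projective $H$-module $Q$ and any complex $A_\bullet$ of $H$-modules one has a natural identification
\[
H_n\bigl(\Hom_H(Q, A_\bullet)\bigr) \;\cong\; \Hom_H\bigl(Q, H_n(A_\bullet)\bigr),
\]
a direct consequence of the exactness of $\Hom_H(Q,-)$, which commutes with the formation of cycles, boundaries and the quotient defining homology. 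Applying this with $Q=UP$ to the map $Uf$ shows that $\Hom_H(UP, UV_\bullet)\to \Hom_H(UP, UW_\bullet)$ is a quasi-isomorphism, whence so is $\Hom_G(P,V_\bullet)\to \Hom_G(P,W_\bullet)$ via the adjunction isomorphism.

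No step presents a serious obstacle; the only point that requires a brief justification is the interchange of homology with $\Hom_H(Q,-)$ for projective $Q$, and this is a purely formal consequence of exactness. The proof therefore reduces to a careful bookkeeping of adjunctions combined with the characterization of $I$-projectives as images of projectives under $F$.
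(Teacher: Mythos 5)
Your proof is correct, but the converse direction runs along a genuinely different track from the paper's. The paper stays on the $G$-side: it writes an $I$-projective $P$ as a retract of an $I$-free representation $\XX^{\oplus\mathcal{J}}$ (Lemma \ref{summand_I_proj} (ii)), identifies $\Hom_G(\XX^{\oplus\mathcal{J}},\cdot)$ with $\prod_{\mathcal{J}}\Hom_G(\XX,\cdot)\cong\prod_{\mathcal{J}}U$, uses that homology of complexes commutes with direct products, and then passes to direct summands. You instead transfer the whole problem to $\Mod(H)$ via the counit isomorphism $FUP\cong P$ and the adjunction, reducing everything to the exactness of $\Hom_H(UP,\cdot)$ for the projective module $UP$ and its consequent commutation with homology. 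The two reductions are of course parallel at bottom --- the exactness of $\Hom_H(Q,\cdot)$ for projective $Q$ is itself seen by writing $Q$ as a summand of a free module, mirroring the paper's retract argument --- but your version invokes the slightly stronger input that the counit is an isomorphism on $I$-projectives (the final assertion of Lemma \ref{summand_I_proj}), whereas the paper only needs the retract characterization. What your route buys is a cleaner conceptual picture: the $I$-equivalences and $I$-fibrations are literally the preimages under $U$ of the maps detected by $\Hom_H(Q,\cdot)$ for projective $Q$, exactly as one expects for a pulled-back projective class. Your interchange $H_n(\Hom_H(Q,A_\bullet))\cong\Hom_H(Q,H_n(A_\bullet))$ for projective $Q$ is correct and, as you say, a formal consequence of exactness.
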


\begin{proof}
Since $U\cong\Hom_G(\XX ,\cdot)$ and since $\XX $ is $I$-projective it follows that $Uf$ is a quasi-isomorphism for every $I$-equivalence $f$. Conversely, for any indexing set $\mathcal{J}$ we have an isomorphism of functors $\Hom_G(\XX ^{\oplus \mathcal{J}},\cdot)\cong \prod_{\mathcal{J}}\Hom_G(\XX ,\cdot)$ and taking homology of complexes commutes with direct products. It follows that if $Uf=\Hom_G(\XX ,f)$ is a quasi-isomorphism (resp.\ an epimorphism) then so is $\Hom_G(V,f)$ for any $I$-free representation $V$. By taking direct summands the same holds for any $I$-projective $V$ (cf.\ Lemma \ref{summand_I_proj} (ii)).
\end{proof}

Our first aim is to show that Definition \ref{I_proj_model_structure} indeed gives a model structure on $\Ch(G)$ whose weak equivalences (resp.\ cofibrations, resp.\ fibrations) are the $I$-equivalences (resp.\ $I$-cofibrations, resp.\ $I$-fibrations). To this end we apply the following result of Christensen and Hovey (cf.\ \cite{ChHov}, Theorem 5.1).

\begin{thm}[Christensen-Hovey]\label{Hovey_Christensen} Let $\A$ be a bicomplete abelian category all of whose objects are small and let $(\mathcal{P},\mathcal{E})$ be a projective class on $\A$. Suppose that $\mathcal{P}$ is determined by a set, i.e.\ that there is a set $\mathcal{S}$ of objects of $\mathcal{P}$ such that $\mathcal{E}$ is precisely the class of $\mathcal{S}$-epimorphisms. Then the $\mathcal{P}$-projective model structure on $\Ch(\A)$ is well-defined and cofibrantly generated.\hfill$\Box$
\end{thm}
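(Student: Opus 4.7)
The plan is to invoke Kan's recognition theorem for cofibrantly generated model structures (cf.\ \cite{HovBook}, Theorem 2.1.19) by exhibiting explicit generating sets. For $A \in \A$ and $n \in \Z$, write $S^nA$ for the complex concentrated in degree $n$ with value $A$, and $D^nA$ for the contractible complex with $A$ in degrees $n$ and $n-1$ joined by the identity. Take
\[
\mathcal{I} = \{S^{n-1}P \hookrightarrow D^nP \,:\, P \in \mathcal{S},\ n \in \Z\}, \qquad \mathcal{J} = \{0 \to D^nP \,:\, P \in \mathcal{S},\ n \in \Z\}
\]
as candidate generating cofibrations and generating trivial cofibrations, respectively.

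The essential input is an adjunction computation: one has $\Hom_{\Ch(\A)}(D^nA, X) \cong \Hom_\A(A, X_n)$ and $\Hom_{\Ch(\A)}(S^nA, X) \cong \Hom_\A(A, Z_nX)$. Unwinding the lifting squares and using the hypothesis that $\mathcal{E}$ coincides with the class of $\mathcal{S}$-epimorphisms, one identifies $\mathcal{J}$-inj with the class of $\mathcal{P}$-fibrations (the analogue of the notion in Definition \ref{I_proj_model_structure}) and $\mathcal{I}$-inj with the subclass of trivial $\mathcal{P}$-fibrations. The 2-out-of-3 property and retract closure for the $\mathcal{P}$-equivalences follow at once from the corresponding properties of quasi-isomorphisms of complexes of abelian groups via the additive functors $\Hom_\A(P,-)$ for $P \in \mathcal{P}$. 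Smallness of the domains of $\mathcal{I}$ and $\mathcal{J}$ is inherited degreewise from the blanket smallness of objects of $\A$.

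It then remains to verify that every relative $\mathcal{J}$-cell complex lies in $\mathcal{I}$-cof and is a $\mathcal{P}$-equivalence. The first inclusion is automatic since each generator in $\mathcal{J}$ is itself a pushout of a generator in $\mathcal{I}$ along a map out of $0$. For the second, a single pushout of $0 \to D^nP$ is an inclusion $X \hookrightarrow X \oplus D^nP$; applying $\Hom_\A(P',-)$ for $P' \in \mathcal{P}$ gives a trivial cofibration of complexes of abelian groups since $D^nP$ is contractible, so such a pushout is a $\mathcal{P}$-equivalence. The main obstacle is propagating $\mathcal{P}$-equivalence through arbitrary transfinite compositions of such pushouts; this uses the smallness hypothesis, the fact that $\Hom_\A(P,-)$ commutes with the relevant filtered colimits, and the observation that filtered colimits of trivial cofibrations of chain complexes of abelian groups remain trivial cofibrations. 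With this final verification in place, the small object argument supplies the functorial factorizations and the remaining model-category axioms hold by construction.
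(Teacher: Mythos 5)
The paper itself gives no proof of this theorem: it is quoted verbatim from \cite{ChHov}, Theorem 5.1. Your plan --- the recognition theorem applied to $\mathcal{I}=\{S^{n-1}P\to D^nP\}$ and $\mathcal{J}=\{0\to D^nP\}$, with $\mathcal{J}$-inj and $\mathcal{I}$-inj identified with the $\mathcal{P}$-fibrations and the trivial $\mathcal{P}$-fibrations via the adjunctions $\Hom_{\Ch(\A)}(D^nA,X)\cong\Hom_\A(A,X_n)$ and $\Hom_{\Ch(\A)}(S^nA,X)\cong\Hom_\A(A,Z_nX)$ --- is exactly the strategy of that source, and those identifications, though only asserted, are correct. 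Two of your stated justifications for the remaining steps, however, fail as written.

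First, $0\to D^nP$ is \emph{not} a pushout of a generator of $\mathcal{I}$ along a map out of $0$: the cobase change of $S^{n-1}P\to D^nP$ along $S^{n-1}P\to 0$ is $0\to S^nP$, a quite different map. The correct (and easier) argument is that a chain map $D^nP\to Y$ is the same datum as a single morphism $P\to Y_n$, so $0\to D^nP$ has the left lifting property against every $\mathcal{P}$-fibration; since $\mathcal{I}$-inj consists of (trivial) $\mathcal{P}$-fibrations, this places $\mathcal{J}$ inside $\mathcal{I}$-cof, and $\mathcal{J}\text{-cell}\subseteq\mathcal{I}\text{-cof}$ follows because $\mathcal{I}$-cof is closed under pushouts and transfinite composition.

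Second, smallness of $P$ only guarantees that $\Hom_\A(P,\cdot)$ commutes with $\kappa$-filtered colimits for some cardinal $\kappa$ depending on $P$; it gives no control over transfinite compositions of length smaller than $\kappa$, and condition (4) of the recognition theorem requires \emph{every} relative $\mathcal{J}$-cell complex, of arbitrary length, to be a $\mathcal{P}$-equivalence. So the colimit-commutation route does not cover all cases. The clean argument avoids colimits of hom-groups entirely: a pushout of a coproduct of maps in $\mathcal{J}$ is the split inclusion $X\to X\oplus\bigl(\bigoplus_i D^{n_i}P_i\bigr)$, and a transfinite composition of such inclusions is again a split inclusion whose cokernel is a direct sum of disks --- the identification of the colimit of a system of split monomorphisms with compatible retractions is purely formal from the universal property and needs no exactness of filtered colimits in $\A$. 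Each $D^{n_i}P_i$ carries a canonical contracting homotopy, a direct sum of contractible complexes with chosen contractions is contractible, and additive functors preserve chain homotopies; hence $\Hom_\A(P',\cdot)$ applied to the cokernel is acyclic for every $P'\in\mathcal{P}$ and the composite is a $\mathcal{P}$-equivalence. With these two repairs the plan goes through.
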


As an application we obtain the following result.

\begin{prop}\label{model_Gchain}
There is a cofibrantly generated model structure on $\Ch(G)$ where the fibrations are the chain maps with surjective $I$-invariants and the weak equivalences are the chain maps whose $I$-invariants are quasi-isomorphisms.
\end{prop}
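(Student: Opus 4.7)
The plan is to invoke the Christensen-Hovey criterion (Theorem \ref{Hovey_Christensen}) applied to the $I$-projective class $(\mathcal{P},\mathcal{E})$ on $\A=\Rep^\infty_k(G)$ from Definition \ref{I_proj}. Once the hypotheses of that theorem are verified, the resulting $\mathcal{P}$-projective model structure on $\Ch(\A)=\Ch(G)$ is cofibrantly generated, and Lemma \ref{proj_is_transfer} identifies its weak equivalences and fibrations with exactly the classes described in the proposition statement.

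First, I would verify the categorical hypotheses. The category $\Rep^\infty_k(G)$ is well-known to be a Grothendieck category (it is cocomplete with exact filtered colimits, and admits a generator given by the direct sum of the compactly induced representations $\ind_J^G(k)$ taken over an open basis of pro-$p$ neighborhoods $J\subseteq I$ of the identity). In particular it is bicomplete and, by \cite{Hov1}, Proposition A.2, every object is small. So the ambient assumptions of Theorem \ref{Hovey_Christensen} are met.

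Next, I would show that the $I$-projective class is determined by the single object $\XX$. Indeed, $U\cong\Hom_G(\XX,\cdot)$ by Frobenius reciprocity, so by Definition \ref{I_proj} a morphism lies in $\mathcal{E}$ if and only if it is $\XX$-epic. Hence $\mathcal{E}$ is the class of $\mathcal{S}$-epimorphisms for $\mathcal{S}=\{\XX\}\subseteq\mathcal{P}$. This is precisely the hypothesis that $\mathcal{P}$ is determined by a set. I should also briefly confirm that $(\mathcal{P},\mathcal{E})$ really is a projective class: conditions (i) and (ii) of Definition \ref{proj_class} follow formally from Lemma \ref{pullback} applied to the standard projective class on $\Mod(H)$, and condition (iii) is the functorial $I$-resolution construction $\XX\otimes_k UV\twoheadrightarrow V$ already described after Corollary \ref{proj_equiv}.

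Now Theorem \ref{Hovey_Christensen} produces a cofibrantly generated $\mathcal{P}$-projective model structure on $\Ch(G)$ whose weak equivalences (resp.\ fibrations) are, by Definition \ref{I_proj_model_structure}, the $I$-equivalences (resp.\ $I$-fibrations). To conclude, I would cite Lemma \ref{proj_is_transfer}, which translates these conditions into the statement that a chain map $f:V_\bullet\to W_\bullet$ is a weak equivalence (resp.\ a fibration) if and only if $Uf$ is a quasi-isomorphism (resp.\ a termwise surjection) of complexes of $H$-modules. This matches the description in the proposition. I do not expect significant obstacles: the only minor care point is confirming the Grothendieck category structure on $\Rep^\infty_k(G)$ (so that smallness of every object is automatic), but this is standard in the smooth representation theory of locally pro-$p$ groups.
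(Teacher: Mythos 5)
Your proposal is correct and follows essentially the same route as the paper: both invoke the Christensen--Hovey theorem for the $I$-projective class, note that $\Rep^\infty_k(G)$ is a Grothendieck category so all objects are small, observe that the class is determined by the singleton $\{\XX\}$ since $U\cong\Hom_G(\XX,\cdot)$, and then identify the resulting fibrations and weak equivalences via Lemma \ref{proj_is_transfer}. The only difference is that you spell out explicitly that the pullback is a genuine projective class, which the paper already handles in Lemma \ref{pullback}.
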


\begin{proof}
In view of Lemma \ref{proj_is_transfer} we just need to check that the hypotheses in Theorem \ref{Hovey_Christensen} are satisfied. Note that $\Rep_k^\infty(G)$ is a Grothendieck category by \cite{Sch}, Lemma 1, and so all of its objects are small by \cite{Hov1}, Proposition A.2. Since $\mathcal{E}$ is precisely the class of morphisms which are $\XX $-epimorphisms the projective class $(\mathcal{P}, \mathcal{E})$ is determined by the singleton $\{\XX \}$.
\end{proof}

We will call the above model structure on $\Ch(G)$ the \emph{$I$-projective model structure}. Its existence can also be deduced from the general results in the previous section. From Lemma \ref{proj_is_transfer} we see that the $I$-projective model structure on $\Ch(G)$ is the right transfer of the projective model structure on $\Ch(H)$. Instead of Theorem \ref{Hovey_Christensen} we could thus have referred to Theorem \ref{right_transfer} and Proposition \ref{path_objects} by constructing suitable path objects in $\Ch(G)$.

\begin{prop}\label{Quillen_equivalence}
The adjoint functors $F:\Ch(H)\rightleftarrows\Ch(G):U$ given by $FM_\bullet=\XX \otimes_H M_\bullet$ and $UV_\bullet=V_\bullet^I$ form a Quillen equivalence with respect to the $I$-projective model structure on $\Ch(G)$ and the projective model structure on $\Ch(H)$. In particular, the derived adjunction
\[
LF:D(H)\rightleftarrows \Ho(\Ch(G)):RU
\]
is an equivalence between the unbounded derived category of $\Mod(H)$ and the homotopy category of the $I$-projective model structure on $\Ch(G)$.
\end{prop}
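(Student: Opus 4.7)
My plan is to apply Lemma \ref{unit_equiv} directly. As observed immediately after Proposition \ref{model_Gchain}, Lemma \ref{proj_is_transfer} identifies the $I$-projective model structure on $\Ch(G)$ as the right transfer of the projective model structure on $\Ch(H)$ along the termwise extended adjunction $F : \Ch(H) \rightleftarrows \Ch(G) : U$. Since $\Ch(G)$ is bicomplete (for instance by Lemma \ref{complexes_Grothendieck} (i), using that $\Rep^\infty_k(G)$ is Grothendieck), Lemma \ref{unit_equiv} applies: the pair $(F,U)$ is automatically a Quillen adjunction, and it is a Quillen equivalence provided that the unit $\eta_{M_\bullet} : M_\bullet \to UFM_\bullet$ is a weak equivalence, i.e.\ a quasi-isomorphism, for every cofibrant object $M_\bullet \in \Ch(H)^\Proj$.

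To verify this, I recall from the discussion preceding Remark \ref{Spaltenstein} that the cofibrant objects of $\Ch(H)^\Proj$ are the dg-projective complexes; in particular they are termwise projective. By Lemma \ref{unit_proj} the unit $\eta_P : P \to UFP$ is an isomorphism for every projective $H$-module $P$. Since both $F$ and $U$ are applied termwise and the unit $\eta_{M_\bullet}$ is computed degreewise, it follows that $\eta_{M_\bullet}$ is an isomorphism of complexes whenever $M_\bullet$ is termwise projective, hence in particular for any cofibrant $M_\bullet$. Such a map is certainly a quasi-isomorphism, so the hypothesis of Lemma \ref{unit_equiv} is satisfied and $(F,U)$ is a Quillen equivalence.

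For the final statement about the derived adjunction, I would invoke \cite{HovBook}, Proposition 1.3.13, which asserts that a Quillen equivalence induces inverse equivalences $LF$ and $RU$ between the two homotopy categories. Combining this with the identification $\Ho(\Ch(H)^\Proj) \cong D(H)$ recalled in Section \ref{section_1} yields the stated equivalence
\[
LF : D(H) \rightleftarrows \Ho(\Ch(G)) : RU.
\]

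The argument is essentially formal once the right transfer is in hand; the only substantive input is Lemma \ref{unit_proj}, which is itself a direct consequence of the identification $\eta_H : H \xrightarrow{\cong} \XX^I$. I do not anticipate any genuine obstacle here, since termwise projectivity of dg-projective complexes is exactly what is needed to promote the degreewise isomorphism guaranteed by Lemma \ref{unit_proj} to a quasi-isomorphism of complexes.
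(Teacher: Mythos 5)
Your proposal is correct and follows essentially the same route as the paper's proof: identify the $I$-projective model structure as the right transfer, reduce to checking the unit on cofibrant (dg-projective, hence termwise projective) complexes via Lemma \ref{unit_equiv}, and conclude with Lemma \ref{unit_proj} and \cite{HovBook}, Proposition 1.3.13. The only cosmetic difference is your pointer to the discussion of dg-projective complexes (which in the paper appears after Remark \ref{Spaltenstein}, with the termwise projectivity cited from \cite{HovBook}, Lemma 2.3.6).
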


\begin{proof}
As we observed above, the $I$-projective model structure on $\Ch(G)$ is the right transfer of the projective model structure on $\Ch(H)$. By Lemma \ref{unit_equiv} it suffices to see that the unit $\eta_{X_\bullet}$ is a weak equivalence at all cofibrant (i.e.\ dg-projective) $X_\bullet\in \Ch(H)$. Since $X_\bullet$ is cofibrant it is termwise projective (cf.\ \cite{HovBook}, Lemma 2.3.6). Thus, it follows from Lemma \ref{unit_proj} that $\eta_{X_\bullet}$ is an isomorphism and hence a weak equivalence as required. For the final statement see \cite{HovBook}, Proposition 1.3.13.
\end{proof}

In order to compute the left derived functor $LF$ in Proposition \ref{Quillen_equivalence} let $M_\bullet\in\Ch(H)$ and let $Q_\bullet$ be any dg-projective complex of $H$-modules which is quasi-isomorphic to $M_\bullet$. Then $LFM_\bullet\cong \XX \otimes_HQ_\bullet$ in $\Ho(\Ch(G))$ for the $I$-projective model structure on $\Ch(G)$. To compute the right derived functor $RU$ let $V_\bullet\in \Ch(G)$. Since every object of $\Ch(G)$ is fibrant for the $I$-projective model structure we have $RUV_\bullet\cong UV_\bullet$ in $D(H)$.\\

The results in Proposition \ref{Quillen_equivalence} admit important improvements. To see this we start with the following characterization of the $I$-cofibrant objects in $\Ch(G)$. Using \cite{ChHov}, Proposition 2.5, this gives a description of the $I$-cofibrations in general. Note that even if a Quillen adjunction \mbox{$F:\C\rightleftarrows\D:U$} is a Quillen equivalence the right adjoint $U$ will generally not preserve cofibrant objects. The adjunction $\id:\Ch(S)^\Proj\rightleftarrows\Ch(S)^\Inj:\id$ gives an easy example. However, our situation is much more special.

\begin{lem}\label{I_cofibrations}
For an object $V_\bullet$ of $\Ch(G)$ the following are equivalent.
\begin{itemize}
\item[(i)]$V_\bullet$ is $I$-cofibrant.
\item[(ii)]$V_\bullet$ is termwise generated by its $I$-invariants and the complex $UV_\bullet$ in $\Ch(H)$ is cofibrant in the projective model structure, i.e.\ dg-projective. 
\end{itemize}
If these conditions are satisfied then the counit $\varepsilon_{V_\bullet}:FUV_\bullet\to V_\bullet$ is an isomorphism in $\Ch(G)$.
\end{lem}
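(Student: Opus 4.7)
My approach hinges on the remark just after Proposition \ref{model_Gchain}: the $I$-projective model structure on $\Ch(G)$ is the right transfer of the projective model structure on $\Ch(H)$ along $F:\Ch(H)\rightleftarrows\Ch(G):U$. Combined with Theorem \ref{right_transfer} and the description of generating cofibrations of $\Ch(H)^\Proj$ in \cite{HovBook}, \S2.3, this right transfer is cofibrantly generated by $F\mathcal{I}=\{S^{n-1}\XX\to D^n\XX\}_{n\in\Z}$, where $S^{n-1}\XX$ denotes $\XX$ concentrated in degree $n-1$ and $D^n\XX$ is the contractible two-term complex with $\XX$ in degrees $n$ and $n-1$.

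The easier direction is (ii)$\Rightarrow$(i) together with the final assertion. If $UV_\bullet$ is dg-projective then each $UV_n$ is projective, so by Lemma \ref{summand_I_proj} each $V_n$ is $I$-projective and $\varepsilon_{V_n}$ is an isomorphism. Naturality then makes $\varepsilon_{V_\bullet}:FUV_\bullet\to V_\bullet$ an isomorphism of chain complexes, which gives the last claim. Since $UV_\bullet$ is cofibrant in $\Ch(H)^\Proj$ and $F$ is left Quillen by Proposition \ref{Quillen_equivalence}, it follows that $V_\bullet\cong FUV_\bullet$ is $I$-cofibrant.

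For (i)$\Rightarrow$(ii) I would argue via cell complexes. By \cite{HovBook}, Corollary 2.1.15, an $I$-cofibrant $V_\bullet$ is a retract of an $F\mathcal{I}$-cell complex $V^*_\bullet=\mathrm{colim}_\alpha V^{(\alpha)}_\bullet$ with $V^{(0)}_\bullet=0$ and each $V^{(\alpha+1)}_\bullet$ obtained from $V^{(\alpha)}_\bullet$ as a pushout along a coproduct of generating cofibrations. I plan to show by transfinite induction that $V^{(\alpha)}_\bullet=FW^{(\alpha)}_\bullet$ for a parallel $\mathcal{I}$-cell complex $W^{(\alpha)}_\bullet$ in $\Ch(H)^\Proj$ with each $W^{(\alpha)}_n$ a free $H$-module. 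At the successor step, the equivalence of Corollary \ref{proj_equiv} (or equivalently the termwise unit isomorphism of Lemma \ref{unit_proj}, valid because $W^{(\alpha)}_\bullet$ is termwise projective) lifts the attaching map $\bigsqcup S^{n-1}\XX\to FW^{(\alpha)}_\bullet$ uniquely to a map $\bigsqcup S^{n-1}H\to W^{(\alpha)}_\bullet$; since $F$ preserves all colimits, forming the pushout in $\Ch(H)$ and then applying $F$ reproduces the pushout in $\Ch(G)$. At limit ordinals, filtered colimits of termwise free complexes along summand inclusions remain termwise free, and $F$ again commutes with them.

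This produces $V^*_\bullet=FW_\bullet$ with $W_\bullet$ an $\mathcal{I}$-cell complex, hence cofibrant, i.e.\ dg-projective, and termwise free. Passing to the retract then yields (ii): each $V_n$ is a summand of the $I$-free representation $V^*_n$, so is $I$-projective by Lemma \ref{summand_I_proj}(ii) and in particular generated by its $I$-invariants; applying $U$ and using the termwise isomorphism $UV^*_\bullet\cong W_\bullet$ from Lemma \ref{unit_proj}, the complex $UV_\bullet$ is a retract of the dg-projective $W_\bullet$ and therefore itself dg-projective, since cofibrant objects in $\Ch(H)^\Proj$ are closed under retracts. The main technical obstacle I anticipate is the careful bookkeeping in the transfinite induction, specifically verifying that lifting attaching maps across the adjunction is compatible with the iterated pushouts and filtered colimits; this relies crucially on keeping each $W^{(\alpha)}_\bullet$ termwise free so that Lemma \ref{unit_proj} continues to apply at every stage.
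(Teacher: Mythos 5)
Your proof is correct, but for the direction (i)$\Rightarrow$(ii) it takes a genuinely different route from the paper. The paper avoids cell complexes entirely: it forms the cofibrant replacement $q_{UY}:QUY\to UY$ in $\Ch(H)^\Proj$, checks that the adjoint map $FQUY\to Y$ is a \emph{trivial} $I$-fibration (the point being that $\eta_{QUY}$ is an isomorphism because $QUY$ is termwise projective), and then uses the lifting property of the cofibrant $Y$ against this map to split it, so that $Y$ is a direct summand of $FQUY$ and $UY$ a direct summand of $QUY$; the two conditions in (ii) then follow from Lemma \ref{summand_I_proj}. Your argument instead unwinds the small object argument: every $I$-cofibrant object is a retract of an $F\mathcal{I}$-cell complex, and every $F\mathcal{I}$-cell complex is $F$ of an $\mathcal{I}$-cell complex because attaching maps lift uniquely across the adjunction (via Lemma \ref{unit_proj}, valid at each stage by the maintained termwise freeness) and $F$ preserves pushouts and transfinite compositions. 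Both arguments ultimately rest on the same input --- the unit is an isomorphism on termwise projective complexes --- but the paper's version is shorter and needs no transfinite bookkeeping, while yours is more constructive and shows in addition that cofibrant objects are retracts of termwise $I$-free cell complexes. Two small presentational points: in your (ii)$\Rightarrow$(i) step, the inference ``$UV_n$ projective, hence $V_n$ is $I$-projective'' also uses the hypothesis that $V_n$ is generated by its $I$-invariants (condition (iii) of Lemma \ref{summand_I_proj} has two clauses), which is part of (ii) but should be cited explicitly; and the identification of the generating cofibrations of the transfer as $F\mathcal{I}$ should be sourced either to Theorem \ref{right_transfer} together with the remark after Proposition \ref{model_Gchain}, or directly to the Christensen--Hovey construction, both of which yield the set $\{S^{n-1}\XX\to D^n\XX\}_{n\in\Z}$ you use.
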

\begin{proof}
Let us put $Y=V_\bullet$ and denote by $q_{UY}:QUY\to UY$ the cofibrant replacement of $UY=V_\bullet^I$. The adjoint morphism $\varepsilon_Y Fq_{UY}:FQUY\to Y$ is an $I$-equivalence by Proposition \ref{Quillen_equivalence}. We claim it is a trivial $I$-fibration. To see this we need to check that we get a trivial fibration in $\Ch(H)$ if we apply the functor $U$. Consider the commutative diagram
\[
\xymatrix@C=1.5cm{
UFQUY\ar[r]^{UFq_{UY}}&UFUY\ar[r]^{U\varepsilon_Y}&UY.\\
QUY\ar[u]^{\eta_{QUY}}\ar[r]_{q_{UY}}&UY\ar[u]_{\eta_{UY}}\ar[ur]_{\id_{UY}}&
}
\]
As seen in the proof of Proposition \ref{Quillen_equivalence} the left vertical arrow is an isomorphism because $QUY$ is cofibrant. Since $q_{UY}$ is a trivial fibration, the claim follows.\\

Assuming that $Y$ is $I$-cofibrant it follows that the map $FQUY\to Y$ splits. Thus, $Y$ is a direct summand of $FQUY$ and $UY$ is a direct summand of $UFQUY\cong QUY$. This implies that $UY$ is a cofibrant object of $\Ch(H)$, i.e.\ is dg-projective. Moreover, Lemma \ref{summand_I_proj} shows that $FQUY$ is termwise $I$-projective. The same is then true of its direct summand $Y$. In particular, $Y$ is termwise generated by its $I$-invariants.\\

Conversely, assume that $Y$ satisfies the conditions in (ii). Then $UY$ is dg-projective and hence is termwise projective. It follows from Lemma \ref{summand_I_proj} that $Y$ is termwise $I$-projective and that the counit $\varepsilon_Y:FUY\to Y$ is an isomorphism. Since $FUY$ is $I$-cofibrant by definition of a left Quillen functor it follows that $Y\cong FUY$ is $I$-cofibrant.
\end{proof}

We obtain the following significant strengthening of Proposition \ref{Quillen_equivalence}. It says that the functors $F$ and $U$ restrict to equivalences between the cofibrant-fibrant objects even before dividing out the homotopy relation. Note that in the situation of Proposition \ref{Quillen_equivalence} all objects are fibrant. We may therefore work with the respective classes of cofibrant objects.

\begin{cor}\label{fib_cofib_equivalence}
Endow $\Ch(H)$ with the projective model structure and $\Ch(G)$ with the $I$-projective model structure of Proposition \ref{model_Gchain}. The functors $U=(\cdot)^I$ and $F=\XX \otimes_H(\cdot)$ restrict to  inverse equivalences of categories $\Ch(G)_c\rightleftarrows\Ch(H)_c$.
\end{cor}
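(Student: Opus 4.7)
The plan is to observe that the corollary is essentially a formal repackaging of Lemma \ref{I_cofibrations} together with Lemma \ref{unit_proj}; all the substantive work has been done. What must be checked is (a) that $U$ sends $\Ch(G)_c$ into $\Ch(H)_c$ and that $\varepsilon$ is invertible there, (b) that $F$ sends $\Ch(H)_c$ into $\Ch(G)_c$ and that $\eta$ is invertible there, and (c) that the adjunction restricts to the stated subcategories.

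For (a), let $V_\bullet \in \Ch(G)_c$. By Lemma \ref{I_cofibrations}, $UV_\bullet$ is dg-projective, i.e.\ cofibrant in the projective model structure on $\Ch(H)$, so $U$ lands in $\Ch(H)_c$. The same lemma moreover states that $\varepsilon_{V_\bullet}\colon FUV_\bullet \to V_\bullet$ is already an isomorphism in $\Ch(G)$, so the counit restricts to a natural isomorphism on $\Ch(G)_c$.

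For (b), let $M_\bullet \in \Ch(H)_c$, i.e.\ dg-projective. By \cite{HovBook}, Lemma 2.3.6, each term $M_n$ is a projective $H$-module, so Lemma \ref{unit_proj} applied degreewise shows that $\eta_{M_\bullet}\colon M_\bullet \to UFM_\bullet$ is an isomorphism in $\Ch(H)$. In particular, $UFM_\bullet \cong M_\bullet$ is dg-projective. Moreover, each $M_n$ is a direct summand of a free $H$-module, so each $FM_n$ is a direct summand of an $I$-free $G$-representation, hence $I$-projective and in particular generated by its $I$-invariants by Lemma \ref{summand_I_proj}. The criterion in Lemma \ref{I_cofibrations} then gives $FM_\bullet \in \Ch(G)_c$, and the unit restricts to a natural isomorphism on $\Ch(H)_c$.

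Combining (a) and (b) shows that $F$ and $U$ restrict to an adjunction between $\Ch(H)_c$ and $\Ch(G)_c$ whose unit and counit are natural isomorphisms, and this is precisely what it means for the restricted functors to be inverse equivalences. I do not anticipate any real obstacle; the only thing to be careful about is invoking the degreewise projectivity of dg-projective complexes before applying Lemma \ref{unit_proj}, since that lemma is stated for $H$-modules rather than complexes.
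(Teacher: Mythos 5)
Your proof is correct and follows essentially the same route as the paper: both directions rest on Lemma \ref{I_cofibrations} and on the termwise projectivity of dg-projective complexes combined with Lemma \ref{unit_proj}. The only cosmetic difference is that the paper deduces $FM_\bullet\in\Ch(G)_c$ directly from $F$ being left Quillen, whereas you re-verify the criterion of Lemma \ref{I_cofibrations} (ii); both are fine.
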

\begin{proof}
If $Y\in\Ch(G)$ is $I$-cofibrant then $UY$ is cofibrant in $\Ch(H)$ by Lemma \ref{I_cofibrations}. If $Z\in\Ch(H)$ is cofibrant then $FZ$ is $I$-cofibrant by definition of a left Quillen functor. The counit $\varepsilon_Y:FUY\to Y$ is an isomorphism by Lemma \ref{I_cofibrations} and the unit $\eta_Z:Z\to UFZ$ is an isomorphism by the proof of Proposition \ref{Quillen_equivalence}.
\end{proof}

For complexes concentrated in degree zero this reflects the fact that $F$ and $U$ restrict to inverse equivalences between the category of projective $H$-modules and the category of $I$-projective $G$-representations, respectively (cf.\ Corollary \ref{proj_equiv}). Given $V,W\in\Rep^\infty_k(G)$ we denote by $\Ext^n_{G,I}(V,W)$ the relative extension groups defined in (\ref{relative_Ext}) formed with respect to the $I$-projective class in Definition \ref{I_proj}. The following result is a formal consequence of Proposition \ref{Quillen_equivalence}.

\begin{cor}\label{Ext_comparison}
If $V,W\in\Rep_k^\infty(G)$ then there are functorial bijections $\Ext^n_{G,I}(V,W)\cong\Ext_H^n(UV,UW)$ for all $n\geq 0$.
\end{cor}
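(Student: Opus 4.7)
The plan is to argue directly from the definitions, using Lemma \ref{pullback} to convert an $I$-projective resolution of $V$ into a genuine projective resolution of $UV$, and then using the adjunction termwise to identify the two Hom-complexes.

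First, choose a (not necessarily functorial) $I$-projective resolution $P_\bullet\to V\to 0$ in $\Rep^\infty_k(G)$, which exists by the comments following Corollary \ref{proj_equiv}. By Lemma \ref{pullback} the image $UP_\bullet\to UV\to 0$ is $\mathcal{P}'$-exact for the standard projective class on $\Mod(H)$, and for that class $\mathcal{P}'$-exactness coincides with ordinary exactness. By Lemma \ref{summand_I_proj}(iii) each $UP_n$ is a projective $H$-module, so $UP_\bullet\to UV\to 0$ is a genuine projective resolution of $UV$. Hence $\Ext^n_H(UV,UW)=H^n\Hom_H(UP_\bullet,UW)$.

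Next, I would compare the Hom-complexes termwise. Since $P_n$ is $I$-projective, Lemma \ref{summand_I_proj} gives a natural isomorphism $\varepsilon_{P_n}:FUP_n\stackrel{\cong}{\longrightarrow}P_n$. Combined with the adjunction $F\dashv U$ this yields a chain of natural isomorphisms
\[
\Hom_G(P_n,W)\;\cong\;\Hom_G(FUP_n,W)\;\cong\;\Hom_H(UP_n,UW),
\]
and these assemble into an isomorphism of complexes $\Hom_G(P_\bullet,W)\cong\Hom_H(UP_\bullet,UW)$. Taking $n$-th cohomology gives
\[
\Ext^n_{G,I}(V,W)\;=\;H^n\Hom_G(P_\bullet,W)\;\cong\;H^n\Hom_H(UP_\bullet,UW)\;=\;\Ext^n_H(UV,UW).
\]

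Finally, functoriality in $W$ is obvious from the construction. Functoriality in $V$ follows from the usual comparison lemma: a morphism $V\to V'$ lifts (uniquely up to chain homotopy) to a map between chosen $I$-projective resolutions $P_\bullet\to V$ and $P'_\bullet\to V'$, and applying $U$ yields the corresponding comparison map between the projective resolutions of $UV$ and $UV'$ that computes $\Ext^\bullet_H(U(-),UW)$. The only mild point to verify is that such liftings exist in the $I$-projective setting, which follows formally from the definition of a projective class (morphisms out of objects in $\mathcal{P}$ can be lifted through $\mathcal{P}$-epimorphisms), so no serious obstacle arises.
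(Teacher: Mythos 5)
Your proof is correct, but it takes a genuinely different route from the paper. The paper deduces the corollary formally from the Quillen equivalence of Proposition \ref{Quillen_equivalence} together with \cite{ChHov}, Corollary 2.14: both sides are identified with $\Hom_{\mathcal{D}}(V,W[n])\cong\Hom_{D(H)}(UV,UW[n])$, where $\mathcal{D}$ is the homotopy category of the $I$-projective model structure on $\Ch(G)$. You instead argue by hand with resolutions: an $I$-projective resolution $P_\bullet\to V$ becomes, after applying $U$, an honest projective resolution of $UV$ (via Lemma \ref{pullback} and Lemma \ref{summand_I_proj}), and the counit isomorphism $FUP_n\cong P_n$ plus adjunction identifies the two Hom-complexes --- indeed the composite isomorphism $\Hom_G(P_n,W)\to\Hom_H(UP_n,UW)$ is just $f\mapsto Uf$ by the triangle identity, which makes compatibility with the differentials transparent. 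Your treatment of functoriality in $V$ via the relative comparison lemma is also sound (and could even be made strict by using the functorial $I$-resolution $\XX\otimes_kU(\cdot)\to(\cdot)$ constructed after Corollary \ref{proj_equiv}). What the paper's argument buys is brevity given the machinery already in place and a conceptual placement of the isomorphism inside the derived equivalence $D(H)\cong\Ho(\Ch(G))$; what yours buys is a self-contained, elementary proof that avoids both the model-categorical apparatus and the external reference, and exhibits the bijection explicitly on cochains.
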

\begin{proof}
Denote by $\mathcal{D}=\Ho(\Ch(G))$ the homotopy category of $\Ch(G)$ endowed with the $I$-projective model structure. By Proposition \ref{Quillen_equivalence} and \cite{ChHov}, Corollary 2.14, we have
\begin{eqnarray*}
\Ext^n_{G,I}(V,W)&\cong&\Hom_\mathcal{D}(V,W[-n])\cong\Hom_{D(H)}(UV,UW[-n])\\
&\cong&\Ext^n_H(UV,UW)
\end{eqnarray*}
functorially in $V$ and $W$ for all $n\geq 0$.
\end{proof}

The existence of the $I$-cofibrant replacement shows that if $V_\bullet\in\Ch(G)$ then there is a trivial $I$-fibration $f:Q_\bullet\to V_\bullet$ where $Q_\bullet$ is $I$-cofibrant. By Lemma \ref{proj_is_transfer} and Lemma \ref{I_cofibrations} this means that $Uf:UQ_\bullet\to U V_\bullet$ is a surjective quasi-isomorphism where $UQ_\bullet \in\Ch(H)$ is dg-projective. It will be useful to have the following slight variant of this fact generalizing the existence of enough $I$-projectives in $\Rep^\infty_k(G)$ discussed earlier.
\begin{lem}\label{enough_I_projectives}
For any $V_\bullet\in\Ch(G)$ there is an $I$-fibration $f:Q_\bullet\to V_\bullet$ such that $UQ_\bullet\in\Ch(H)$ is a projective complex.
\end{lem}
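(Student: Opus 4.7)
The plan is to adapt, to the relative setting, the disk-complex construction used in the proof of Lemma~\ref{complexes_Grothendieck}(ii). Recall from the discussion following Corollary~\ref{proj_equiv} that for every $V\in\Rep^\infty_k(G)$ the natural map $\XX\otimes_k UV\to V$ is a functorial $I$-epimorphism whose $I$-invariants recover the standard $H$-linear surjection $H\otimes_k UV\to UV$, and that $\XX\otimes_k UV$ is $I$-free. The idea is to bundle these maps termwise along $V_\bullet$ into a single disk-type complex, which should automatically be an $I$-fibration with projective $I$-invariants.

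Concretely, I would set
$$Q_\bullet \;:=\; \bigoplus_{n\in\Z} D^n\bigl(\XX\otimes_k UV_n\bigr),$$
where $D^n(W)$ denotes the contractible two-term complex $[0\to W\stackrel{\id}{\to}W\to 0]$ with $W$ in degrees $n$ and $n-1$, equipped with the chain map $f\colon Q_\bullet\to V_\bullet$ whose restriction to the summand $D^n(\XX\otimes_k UV_n)$ is the unique chain map extending the functorial $I$-epimorphism $\XX\otimes_k UV_n\to V_n$ in degree $n$ (hence equal to its composite with $d_n$ in degree $n-1$). Since $U$ is additive, commutes with direct sums in $\Rep^\infty_k(G)$ (taking $I$-invariants of a direct sum of smooth representations is componentwise), and manifestly commutes with $D^n$, applying $U$ yields
$$UQ_\bullet \;\cong\; \bigoplus_{n\in\Z} D^n\bigl(H\otimes_k UV_n\bigr),$$
a direct sum of split acyclic two-term complexes of free $H$-modules. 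Recalling from \S\ref{section_1} that the projective objects of $\Ch(H)$ are precisely the split acyclic termwise projective complexes and are closed under arbitrary direct sums, this identifies $UQ_\bullet$ as a projective complex.

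To see that $f$ is an $I$-fibration, by Lemma~\ref{proj_is_transfer} it is enough to check that $(Uf)_m$ is surjective for every $m\in\Z$. But the summand $D^m(\XX\otimes_k UV_m)$ contributes in degree $m$, after applying $U$, precisely the standard surjection $H\otimes_k UV_m\to UV_m$, which already hits all of $UV_m$. No serious obstacle is expected; the only point that deserves a brief verification is the compatibility of $U$ with the direct-sum and disk-complex constructions, and this is formal from additivity of $U$ and the componentwise nature of $I$-invariants on a direct sum of smooth $G$-representations.
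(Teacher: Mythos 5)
Your proof is correct and is essentially the paper's argument: the paper takes a surjection $g\colon P_\bullet\to UV_\bullet$ from a projective complex of $H$-modules (supplied by Lemma~\ref{complexes_Grothendieck}(ii), whose proof produces exactly a direct sum of disk complexes on projective covers) and sets $Q_\bullet=FP_\bullet$ with $f=\varepsilon_{V_\bullet}\circ Fg$; your $Q_\bullet$ is precisely $FP_\bullet$ for the explicit choice of free covers $H\otimes_k UV_n\twoheadrightarrow UV_n$. The supporting verifications (the unit is an isomorphism on termwise projectives, and Lemma~\ref{proj_is_transfer} reduces the fibration condition to surjectivity of $Uf$) coincide with the paper's.
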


\begin{proof}
By Lemma \ref{complexes_Grothendieck} (ii) there is a surjection $g:P_\bullet\to UV_\bullet$ in $\Ch(H)$ where $P_\bullet\in\Ch(H)$ is projective. Setting $Q_\bullet=FP_\bullet\in\Ch(G)$ the unit of the adjunction $\eta_{P_\bullet}:P_\bullet\to UQ_\bullet=UFP_\bullet$ is an isomorphism by Lemma \ref{unit_proj}. Therefore, $UQ_\bullet\in\Ch(H)$ is projective. Moreover, the map
\[
f:Q_\bullet=FP_\bullet\stackrel{Fg}{\longrightarrow}FUV_\bullet\stackrel{\varepsilon_{V_\bullet}}{\longrightarrow}V_\bullet
\]
is an $I$-fibration by Lemma \ref{proj_is_transfer} because $Uf\eta_{P_\bullet}=U\varepsilon_{V_\bullet}UFg\eta_{P_\bullet}=g$ is surjective.
\end{proof}

\begin{rem} The above result can also be deduced from the axioms of a model category directly. Indeed, for any $V_\bullet\in\Ch(G)$ we can factorize the map $0\to V_\bullet$ into a trivial $I$-cofibration followed by an $I$-fibration, i.e.\ there is an $I$-fibration $f:Q_\bullet\to V_\bullet$ where $Q_\bullet\in\Ch(G)$ is trivially cofibrant. By Lemma \ref{I_cofibrations} this implies that $UQ_\bullet\in\Ch(H)$ is both cofibrant (i.e.\ dg-projective) and trivial (i.e.\ exact) hence is projective.
\end{rem}

For the sake of completeness we note that it is also possible to start with a model structure on $\Ch(G)$ and apply a left transfer construction to get a model structure on $\Ch(H)$. However, in general it does not make sense to talk about a projective model structure on $\Ch(G)$ unless $G$ is discrete.

\begin{rem}\label{not_enough_projectives}
Let $G$ be a non-discrete profinite group admitting an open subgroup which is pro-$p$. If $k$ is a field of characteristic $p$ then the only projective object of $\Rep_k^\infty(G)$ is the zero object.
\end{rem}
\begin{proof}
Let $U''\subsetneqq U'$ be open pro-$p$ subgroups of $G$ such that $U''$ is normal in $U'$. Note first that the space of $U'$-invariants of $\ind_{U''}^{U'}(k)=k[U'/U'']$ is spanned by $\sum_{u\in U'/U''}u$. This maps to zero in $\ind_{U''}^{U'}(k)_{U'}$ because $p$ divides $(U':U'')$ and $k$ has characteristic $p$. It follows that the canonical map $\ind_{U''}^{U'}(W)^{U'}\to\ind_{U''}^{U'}(W)_{U'}$ is zero for any trivial $U''$-representation $W$.\\

Now assume that $P\in\Rep_k^\infty(G)$ is a projective object and that $U'$ is an open pro-$p$ subgroup of $G$. The restriction functor $\Rep_k^\infty(G)\to\Rep_k^\infty(U')$ preserves projective objects because it admits the exact right adjoint $\Ind_{U'}^G=\ind_{U'}^G$. Since $G$ is non-discrete there is an open normal subgroup $U''\subsetneqq U'$. Consider the surjection $\ind_{U''}^{U'}(P_{U'})\to P_{U'}$. Since $P$ is projective the canonical map $P\to P_{U'}$ lifts to a map $P\to \ind_{U''}^{U'}(P_{U'})$. Therefore, the canonical map $P^{U'}\to P_{U'}$ factors through $\ind_{U''}^{U'}(P_{U'})^{U'}\to\ind_{U''}^{U'}(P_{U'})_{U'}\to P_{U'}$, hence is zero as seen above. This also applies to $U''\subseteq {U'}$ so that the map $P^{U''}\to P_{U''}\to P_{U'}$ is zero. Letting $U''\subseteq {U'}$ vary, we obtain that the canonical map $P=\bigcup_{U''}P^{U''}\to P_{U'}$ is zero.\\

Now fix an open pro-$p$ subgroup $U$ of $G$. Letting $U''$ run through the open normal subgroups of $U$ the canonical surjection $V=\bigoplus_{U''}P^{U''}\to P$ splits. For any open normal subgroup $U'\subseteq U$ and any $W\in\Rep_k^\infty(U)$ we let $W(U')=\ker(W\to W_{U'})$. Since $U'$ acts trivially on $P^{U'}$ we have $(P^{U'})(U')=0$. Thus, $V(U')=\bigoplus_{U''}(P^{U''})(U')\subseteq\bigoplus_{U''\neq U'}P^{U''}$. As seen above we have $P=P(U')$ for any open normal subgroup $U'$ of $U$ and hence $P\subseteq\bigcap_{U'} V(U')=0$.
\end{proof}

Instead we consider the injective model structure on $\Ch(G)$ introduced in \S\ref{section_1} and show that it admits a left transfer to $\Ch(H)$ in the following sense.

\begin{prop}\label{Hecke_left_transfer} There is a model structure on $\Ch(H)$ where the cofibrations are the maps $g$ such that $Fg$ is a monomorphism and the weak equivalences are the maps $g$ such that $Fg$ is a quasi-isomorphism. The fibrations are the maps satisfying the right lifting property with respect to all trivial cofibrations.
\end{prop}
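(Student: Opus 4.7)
The strategy is to construct the desired model structure as the \emph{left transfer} of the injective model structure on $\Ch(G)$ along the adjunction $F:\Ch(H)\rightleftarrows\Ch(G):U$. By Lemma \ref{complexes_Grothendieck}(i) both $\Ch(H)$ and $\Ch(G)$ are Grothendieck categories; in particular they are bicomplete and locally presentable, and every object is small. The injective model structure on $\Ch(G)$ is cofibrantly generated, with cofibrations precisely the monomorphisms and weak equivalences precisely the quasi-isomorphisms, so we are in the standard setting for left-induced model structures on a combinatorial adjunction in the spirit of Makkai--Ros\'ick\'y and Hess--K\k{e}dziorek--Riehl--Shipley.

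First I would verify the elementary closure properties of the two proposed classes. Both the class of cofibrations and the class of weak equivalences in $\Ch(H)$ are closed under retracts, and the class of weak equivalences inherits the 2-out-of-3 property from $\Ch(G)$ via the functor $F$. Moreover, since $F$ is a left adjoint it preserves all colimits, and since monomorphisms and trivial cofibrations (monomorphisms with acyclic cokernel) in the Grothendieck category $\Ch(G)$ are stable under pushouts and transfinite compositions, the same is automatic for the corresponding pullback classes in $\Ch(H)$.

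Next I would invoke the general existence theorem for left-induced model structures in the combinatorial setting. Together with a small object argument in the locally presentable category $\Ch(H)$, this yields sets of generating cofibrations and generating trivial cofibrations and the required functorial factorizations. The fibrations are then precisely the morphisms satisfying the right lifting property against the generating trivial cofibrations, and they are easily seen to coincide with the class described in the statement.

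The principal obstacle is the \emph{acyclicity condition}: one must verify that every morphism in the saturation of the generating trivial cofibrations under pushouts, transfinite compositions, and retracts is a weak equivalence, i.e.\ is sent by $F$ to a quasi-isomorphism in $\Ch(G)$. Using that $F$ preserves colimits and that acyclic monomorphisms in the injective model on $\Ch(G)$ are themselves closed under pushouts and transfinite compositions, this verification reduces to producing a generating set of trivial cofibrations in $\Ch(H)$ whose images under $F$ are genuine acyclic monomorphisms in $\Ch(G)$. Once this check is carried out, the left-transferred model structure exists with the cofibrations, weak equivalences, and fibrations as claimed.
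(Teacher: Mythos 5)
Your overall framework is the right one and matches the paper's: both $\Ch(H)$ and $\Ch(G)$ are Grothendieck, hence locally presentable, and one invokes the existence criterion for left-induced model structures from Bayeh--Hess--Karpova--K\c{e}dziorek--Riehl--Shipley (this is Theorem \ref{left_transfer_exists} in the paper). However, you have misstated the acyclicity condition, and as a result the actual mathematical content of the proof is missing. The condition you propose to check --- that the saturation of a set of generating trivial cofibrations under pushouts, transfinite compositions and retracts consists of weak equivalences --- is the acyclicity condition for Kan's recognition theorem and for \emph{right} transfer. For a \emph{left} transfer it is essentially vacuous: by definition a trivial cofibration in $\Ch(H)$ is a map $g$ with $Fg$ a monic quasi-isomorphism, and since $F$ preserves colimits and acyclic monomorphisms in $\Ch(G)$ are closed under pushout and transfinite composition, closure is automatic; there is no generating set to ``produce'' whose images need checking.

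The genuine obstacle, which your proposal never addresses, lies on the other side of the lifting duality: one must show that every map having the right lifting property with respect to \emph{all} cofibrations (the paper's \emph{coanodyne} maps) is a weak equivalence, i.e.\ that the two candidate notions of trivial fibration coincide. This is the hypothesis of Theorem \ref{left_transfer_exists}, and it is where all the work happens. The paper proves it in two steps: first, a coanodyne map $f_\bullet:P_\bullet\to Q_\bullet$ is a split epimorphism (lift against the cofibration $0\to Q_\bullet$); second, its kernel $K_\bullet$ admits a contracting homotopy, obtained by lifting in the square whose left edge is the termwise split inclusion $K_\bullet\to T_\bullet$ into the mapping cone of $\id_{K_\bullet}$ --- this inclusion is a cofibration because $F$ of a termwise split monomorphism is a monomorphism. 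Functoriality then makes $FK_\bullet$ contractible, so $Ff_\bullet$, being a split epimorphism with acyclic kernel $FK_\bullet$, is a quasi-isomorphism. Without an argument of this kind your proof does not go through: note in particular that $F=\XX\otimes_H(\cdot)$ is not exact, so there is no cheap reason why a map with the stated lifting property should become a quasi-isomorphism after applying $F$.
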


In analogy to the right transfer the left transfer has an apparent ambiguity concerning the notion of a trivial fibration. This could either be a morphism satisfying the right lifting property with respect to all cofibrations or it could be a fibration which is also a weak equivalence. We need to see that these two classes of morphisms coincide. To distinguish them we call a map $f$ in $\Ch(H)$ {\it coanodyne} if it satisfies the right lifting property with respect to all cofibrations in the sense of Proposition \ref{Hecke_left_transfer}.

\begin{rem}\label{coanodyne_spli_epi} Note that any coanodyne map $f: X\to Y$ in $\Ch(H)$ is a split epimorphism. Indeed, the map $0\to Y$ is a cofibration for the left transfer. By applying the right lifting property to
\[
\begin{tikzcd}
0\arrow{r}\arrow{d} & X\arrow{d}{f}\\
Y\arrow[swap]{r}{\id} & Y
\end{tikzcd}
\]
we obtain the desired splitting.
\end{rem}

If the left transfer is a model structure then the coanodyne morphisms are precisely the fibrations which are also weak equivalences. Moreover, one needs to show that functorial factorizations exist. In general this involves serious set-theoretic issues. However the following result shows that under certain assumptions on the categories these problems are easy to solve (cf.\ \cite{BHKKRS}, Theorem 2.23).

\begin{thm}\label{left_transfer_exists} Suppose we have an adjunction $F:\mathcal{C}\rightleftarrows\mathcal{D}:U$ between locally presentable categories where $\mathcal{D}$ carries a cofibrantly generated model structure. Then the left transfer to $\mathcal{C}$ exists if and only if $Ff$ is a weak equivalence in $\mathcal{D}$ for every coanodyne morphism $f$ in $\mathcal{C}$. \hfill$\Box$
\end{thm}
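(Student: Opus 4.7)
The ``only if'' direction is essentially tautological. If the left transfer defines a model structure, then a coanodyne morphism $f$ in $\mathcal{C}$ (i.e.\ one with the right lifting property with respect to all cofibrations) is by definition a trivial fibration, hence in particular a weak equivalence. By the definition of weak equivalence in the left transfer this means $Ff$ is a weak equivalence in $\mathcal{D}$, as required.

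For the ``if'' direction, my plan is to apply Jeff Smith's recognition theorem for combinatorial model categories (compare \cite{Beke}). Define the candidate classes $W_\mathcal{C} := F^{-1}(W_\mathcal{D})$ and $\mathrm{Cof}_\mathcal{C} := F^{-1}(\mathrm{Cof}_\mathcal{D})$, and let the fibrations be the morphisms with the right lifting property with respect to $\mathrm{Cof}_\mathcal{C}\cap W_\mathcal{C}$. The 2-out-of-3 property and closure under retracts for $W_\mathcal{C}$ are immediate from the corresponding properties of $W_\mathcal{D}$ since $F$ is a functor. The accessibility and accessible embeddedness of $W_\mathcal{C}$ in $\mathrm{Arr}(\mathcal{C})$ follow from $F$ being a left adjoint (hence accessible) together with the fact that in any combinatorial model category the class of weak equivalences is accessibly embedded in the arrow category, a standard result going back to Smith and extended by Dugger and Rosick\'y.

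The heart of the argument is to exhibit a set $I$ of morphisms in $\mathcal{C}$ with $I\text{-cof}=\mathrm{Cof}_\mathcal{C}$. For a sufficiently large regular cardinal $\lambda$ (chosen so that $\mathcal{C}$ is locally $\lambda$-presentable, so that $F$ preserves $\lambda$-presentability of arrows, and so that membership in $\mathrm{Cof}_\mathcal{D}$ is detected at $\lambda$-presentable level), let $I$ consist of a set of representatives of arrows $i:A\to B$ between $\lambda$-presentable objects of $\mathcal{C}$ with $Fi\in\mathrm{Cof}_\mathcal{D}$. A standard accessibility argument, using that $F$ preserves colimits and that $\mathrm{Cof}_\mathcal{D}=I_\mathcal{D}\text{-cof}$ is closed under $\lambda$-filtered colimits in $\mathrm{Arr}(\mathcal{D})$, then gives $I\text{-cof}=\mathrm{Cof}_\mathcal{C}$. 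Taking orthogonals, $I\text{-inj}=\mathrm{Cof}_\mathcal{C}\text{-inj}$ is exactly the class of coanodyne morphisms in $\mathcal{C}$. It is precisely at this point that the hypothesis enters: by assumption every coanodyne morphism $f$ satisfies $Ff\in W_\mathcal{D}$, i.e.\ $f\in W_\mathcal{C}$, so $I\text{-inj}\subseteq W_\mathcal{C}$.

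It remains to check that $\mathrm{Cof}_\mathcal{C}\cap W_\mathcal{C}=F^{-1}(\mathrm{Cof}_\mathcal{D}\cap W_\mathcal{D})$ is closed under pushouts and transfinite compositions, which follows immediately since $F$ preserves these colimits and the analogous class in $\mathcal{D}$ has these closure properties. Smith's recognition theorem then delivers the desired cofibrantly generated model structure on $\mathcal{C}$, with cofibrations $\mathrm{Cof}_\mathcal{C}$ and weak equivalences $W_\mathcal{C}$; the fibrations are automatically those characterized by the right lifting property against $\mathrm{Cof}_\mathcal{C}\cap W_\mathcal{C}$, and coanodyne morphisms now coincide with trivial fibrations. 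The principal obstacle is the accessibility bookkeeping in the construction of $I$: one must choose $\lambda$ large enough that both $W_\mathcal{D}$ and $\mathrm{Cof}_\mathcal{D}$ are determined by their behaviour on $\lambda$-presentable arrows and that $F$ interacts well with $\lambda$-filtered colimits. This is precisely the kind of delicate cardinal argument that makes local presentability of $\mathcal{C}$ and $\mathcal{D}$ indispensable.
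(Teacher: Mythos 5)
Your ``only if'' direction is correct and is indeed essentially tautological: in any model structure the maps with the right lifting property against all cofibrations are exactly the trivial fibrations, so a coanodyne map is a weak equivalence and hence $Ff\in W_{\mathcal{D}}$. Note, however, that the paper offers no proof of this theorem at all; it is quoted verbatim from \cite{BHKKRS}, Theorem 2.23, so the relevant comparison is with the argument given there.

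The ``if'' direction has a genuine gap at its central step, namely the claim that $\mathrm{Cof}_{\mathcal{C}}=F^{-1}(\mathrm{Cof}_{\mathcal{D}})$ equals $I\text{-cof}$ for a set $I$ of maps between $\lambda$-presentable objects. This is not a ``standard accessibility argument''. Accessibility of the full subcategory $F^{-1}(\mathrm{Cof}_{\mathcal{D}})$ of the arrow category only yields that every such map is a $\lambda$-filtered colimit \emph{in the arrow category} of maps in $I$; it does not produce a cell structure, i.e.\ a presentation as a retract of a transfinite composite of pushouts of maps in $I$, and the witnessing cell structure of $Ff$ in $\mathcal{D}$ cannot be lifted along $F$. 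In fact, left-induced cofibration classes are in general \emph{not} cofibrantly generated by any set --- left-induced model structures on locally presentable categories are only accessible, the Hurewicz model structure on unbounded chain complexes being the standard example --- so a proof via Smith's recognition theorem, which would output a combinatorial model structure whose cofibrations are exactly $F^{-1}(\mathrm{Cof}_{\mathcal{D}})$, cannot work. Since your identification $I\text{-inj}=\{\text{coanodyne maps}\}$, the only place where the acyclicity hypothesis enters, also rests on $I\text{-cof}=\mathrm{Cof}_{\mathcal{C}}$, the argument collapses precisely there. The proof in \cite{BHKKRS} proceeds differently: one first establishes the existence of the two left-induced weak factorization systems $\bigl(F^{-1}(\mathrm{Cof}_{\mathcal{D}}),\ \text{coanodyne maps}\bigr)$ and $\bigl(F^{-1}(\mathrm{Cof}_{\mathcal{D}}\cap W_{\mathcal{D}}),\ \text{fibrations}\bigr)$ on the locally presentable category $\mathcal{C}$ by means of Garner's algebraic small object argument in the form developed by Makkai and Rosick\'y (this is where local presentability is used in an essential way, and it produces accessible rather than cofibrantly generated factorizations), and only then uses the acyclicity hypothesis together with the retract argument to verify that these two weak factorization systems and $W_{\mathcal{C}}=F^{-1}(W_{\mathcal{D}})$ assemble into a model structure.
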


We note that together with $\Mod(H)$ and $\Rep^\infty_k(G)$ also the categories $\Ch(H)$ and $\Ch(G)$ are Grothendieck categories (cf.\ Lemma \ref{complexes_Grothendieck}) and hence are locally presentable (cf.\ \cite{Beke}, Proposition 3.10).

\begin{prop}\label{coanodyne_is_qi} If $f_\bullet: P_\bullet\to Q_\bullet$ is a coanodyne map in $\Ch(H)$ then the complexes $K_\bullet=\ker(f_\bullet)$ and $FK_\bullet =\XX\otimes_H K_\bullet$ are contractible.
\end{prop}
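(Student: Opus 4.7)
The plan is to test the coanodyne map $f_\bullet$ against the inclusion of $K_\bullet$ into the mapping cone of $\id_{K_\bullet}$, thereby producing a retraction that forces contractibility. By Remark \ref{coanodyne_spli_epi}, $f_\bullet$ is a split epimorphism, so $K_\bullet=\ker(f_\bullet)$ is a direct summand of $P_\bullet$ in $\Ch(H)$. Form the standard mapping cone $C_\bullet=\operatorname{cone}(\id_{K_\bullet})$, given by $C_n=K_{n-1}\oplus K_n$ with differential $d(a,b)=(-da,\,a+db)$; a direct calculation shows that $h(a,b)=(b,0)$ is a contracting homotopy, so $C_\bullet$ is contractible in $\Ch(H)$.

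The canonical inclusion $\iota:K_\bullet\to C_\bullet$, $b\mapsto(0,b)$, is a degreewise split monomorphism. Since $F$ is additive it preserves split monomorphisms, so $F\iota$ is a monomorphism in $\Ch(G)$, meaning $\iota$ qualifies as a cofibration for the left transfer. The square
\[
\begin{tikzcd}
K_\bullet\arrow{r}\arrow[swap]{d}{\iota} & P_\bullet\arrow{d}{f_\bullet}\\
C_\bullet\arrow{r}{0} & Q_\bullet
\end{tikzcd}
\]
commutes because $K_\bullet\subseteq\ker(f_\bullet)$, and the coanodyne hypothesis on $f_\bullet$ furnishes a lift $\phi:C_\bullet\to P_\bullet$. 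Since $f_\bullet\phi=0$, the map $\phi$ factors uniquely through the inclusion $K_\bullet\hookrightarrow P_\bullet$, yielding a retraction $r:C_\bullet\to K_\bullet$ with $r\iota=\id_{K_\bullet}$.

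Thus $K_\bullet$ is a retract of the contractible complex $C_\bullet$, hence contractible itself; explicitly, $rh\iota$ provides a contracting homotopy for $\id_{K_\bullet}$. Because $F$ is additive it sends chain contractions to chain contractions, so applying $F$ to this homotopy directly shows $FK_\bullet$ is contractible as well (equivalently, $FK_\bullet$ is a retract of $F C_\bullet\cong\operatorname{cone}(\id_{FK_\bullet})$). The one non-formal step is locating a suitable cofibration of the left-transfer type with contractible codomain; the cone of the identity is the natural candidate because it is manifestly contractible, receives a split inclusion from $K_\bullet$, and the coanodyne property does the rest.
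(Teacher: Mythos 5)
Your proof is correct and follows essentially the same route as the paper: form the cone of $\id_{K_\bullet}$, observe that the canonical degreewise-split inclusion is a cofibration for the left transfer, and use the coanodyne lifting property against this inclusion to retract the cone onto $K_\bullet$. The only cosmetic difference is that the paper extracts the contracting homotopy $s_\bullet$ by an explicit coordinate computation from the lift, whereas you invoke the (equivalent and slightly cleaner) fact that a chain retract of a contractible complex is contractible, with $rh\iota$ as the explicit homotopy; both then conclude for $FK_\bullet$ by functoriality.
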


\begin{proof}
Let $T_\bullet$ be the mapping cone of $\id_{K_\bullet}$ defined by $T_n=K_n\oplus K_{n-1}$ with differential $d(x,y)=(dx+y, -dy)$. Consider the canonical map $\iota_\bullet:K_\bullet\to T_\bullet$ given by $\iota (x)=(x,0)$. Since this is a termwise split injection so is $F\iota_\bullet$. In particular, $F\iota_\bullet$ is a cofibration and so is $\iota_\bullet$ by definition of the left transfer. Since $f_\bullet$ is coanodyne we may apply the right lifting property to the square
\[
\begin{tikzcd}
K_\bullet\arrow[hook]{r}\arrow[swap]{d}{\iota_\bullet} & P_\bullet\arrow{d}{f_\bullet}\\
T_\bullet\arrow[swap]{r}{0}\arrow[dashrightarrow]{ur}{g_\bullet} & Q_\bullet
\end{tikzcd}
\]
and obtain a map $g_\bullet:T_\bullet\to P_\bullet$ making the two triangles commute. This implies $f_\bullet g_\bullet=0$ whence $g_\bullet$ has image in $K_\bullet$. We may therefore view $g_\bullet$ as a map $T_\bullet\to K_\bullet$ with the property that $g_\bullet \iota_\bullet=\id_{K_\bullet}$. For any $n\in\Z$ this allows us to write $g_n(x,y)=x+s_ny$ for some $H$-linear map $s_n:K_{n-1}\to K_n$. Let $x\in K_{n+1}$ and $y\in K_n$. Since $g_\bullet$ is a chain map we have
\begin{align*}
d_{n+1}x+y-s_nd_ny&=g_n(d_{n+1}x+y, -d_ny)\\
&=g_nd_{n+1}(x,y)\\
&=d_ng_{n+1}(x,y)\\
&=d_{n+1}x+d_ns_{n+1}y
\end{align*}
and hence $y=s_nd_ny+d_ns_{n+1}y$. Thus, $s_\bullet$ defines a chain homotopy between the zero map and the identity map on $K_\bullet$. By functoriality $F s_\bullet$ is then a chain homotopy between the zero map and the identity on $FK_\bullet$.
\end{proof}

\begin{rem} We note that the proof of Proposition \ref{coanodyne_is_qi} implies that coanodyne maps in $\Ch(H)$ are quasi-isomorphisms.
\end{rem}

\begin{proof}[Proof of Proposition \ref{Hecke_left_transfer}]
By Theorem \ref{left_transfer_exists} we just need to check that if $f_\bullet: P_\bullet\to Q_\bullet$ is a coanodyne map in $\Ch(H)$ then $Ff_\bullet$ is a quasi-isomorphism in $\Ch(G)$. To see this let $K_\bullet=\ker(f_\bullet)$. By Remark \ref{coanodyne_spli_epi} the map $f_\bullet$ is a split epimorphism. Therefore, also $Ff_\bullet$ is a split epimorphism and we have $\ker(Ff_\bullet)=F\ker(f_\bullet)=FK_\bullet$. Since $FK_\bullet$ is acyclic (cf.\ Proposition \ref{coanodyne_is_qi}) the long exact homology sequence shows that $Ff_\bullet$ is a quasi-isomorphism.
\end{proof}

The model structure on $\Ch(H)$ constructed in Proposition \ref{Hecke_left_transfer} will be called the {\it $G$-injective model structure}. It makes $F:\Ch(H)\rightleftarrows\Ch(G):U$ a Quillen adjunction. We hope to give an in-depth study of this adjunction elsewhere.


\section{Gorenstein projective model structures}\label{section_3}

%
%


Let $\mathcal{A}$ be an abelian category with enough projectives. An object $A\in\mathcal{A}$ is called {\it Gorenstein projective} if there is an acyclic complex
\[
Y = \cdots\longrightarrow Y_1\stackrel{d_1}{\longrightarrow}Y_0\stackrel{d_0}{\longrightarrow} Y_{-1}\longrightarrow\cdots
\]
of projective objects of $\mathcal{A}$ for which $A=Z_0Y=B_0Y$ and which remains exact upon applying $\Hom_{\mathcal{A}}(\cdot,P)$ for any projective object $P$ of $\mathcal{A}$.\\

A ring $S$ is called {\it Gorenstein} if it is both left and right noetherian and if it has finite injective dimension both as a left and as a right $S$-module. In this case the left and right selfinjective dimensions of $S$ coincide (cf.\ \cite{EJ11}, Proposition 9.1.8). If $n$ denotes their common value then $S$ is called an {\it $n$-Gorenstein ring}.\\

If $S$ is an $n$-Gorenstein ring then an $S$-module $M$ has finite projective dimension if and only if it has finite injective dimension. In this case both dimensions are bounded above by $n$ (cf.\ \cite{EJ11}, Theorem 9.1.10). Moreover, $M$ is Gorenstein projective as an object of $\Mod(S)$ if and only if $\Ext^i_S(M,P)=0$ for all $i\geq 1$ and all projective $S$-modules $P$  (cf.\ \cite{EJ11}, Corollary 11.5.3). This is true if and only if $\Ext^i_S(M,P)=0$ for all $i\geq 1$ and all modules $P$ of finite projective dimension.

\begin{rem}\label{complete_resolution}
If $S$ is an $n$-Gorenstein ring then an $S$-module $M$ is Gorenstein projective if and only if $M=Z_0Y$ for an acyclic complex $Y\in\Ch(S)$ of projective $S$-modules. Indeed, for any projective $S$-module $P$ we then have $\Ext^i_S(M,P)= \Ext^i_S(Z_0Y,P) = \Ext^{i+n}_S(Z_{-n}Y,P)=0$ for any $i>0$ because $P$ has injective dimension at most $n$. The same argument shows that all cycles of $Y$ are Gorenstein projective. That the complex $Y$ remains acyclic upon applying $\Hom_S(\cdot, P)$ is then automatic. To see this one simply applies $\Hom_S(\cdot, P)$ to the short exact sequences $0\to Z_jY\to Y_j\to Z_{j-1}Y\to 0$ for any $j\in \Z$.
\end{rem}

The following fundamental theorem is due to Hovey (cf.\ \cite{Hov2}, Theorem 8.6).
\begin{thm}[Hovey]\label{GP_R_Mod}

Let $S$ be a Gorenstein ring. On $\Mod(S)$ there is a cofibrantly generated model structure for which
\begin{itemize}
\item the cofibrations are the monomorphisms with Gorenstein projective cokernel,
\item the fibrations are the epimorphisms,
\item the trivial objects are the modules of finite projective dimension.\hfill$\Box$
\end{itemize}
\end{thm}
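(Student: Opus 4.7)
The plan is to apply Hovey's Theorem~\ref{cotorsion_pairs}(ii) to $\Mod(S)$ equipped with the proper class $\mathcal{P}$ of all short exact sequences, taking
\[
\C=\mathrm{GProj}(S),\qquad\F=\Mod(S),\qquad\T=\W,
\]
where $\W$ denotes the class of $S$-modules of finite projective dimension. Because every exact sequence lies in $\mathcal{P}$, a $\mathcal{P}$-monomorphism is just a monomorphism and a $\mathcal{P}$-epimorphism is just an epimorphism, so the three classes isolated by the theorem coincide with those claimed in the statement. The cofibrantly generated property will come out of the small object argument used to establish the functorial completeness of the cotorsion pairs.

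The thickness of $\W$ is standard: finite projective dimension is preserved by direct summands and satisfies $2$-out-of-$3$ along short exact sequences. For the cotorsion pair $(\C\cap\T,\F)=(\mathrm{Proj}(S),\Mod(S))$ the key input is the identification $\mathrm{GProj}(S)\cap\W=\mathrm{Proj}(S)$. If $M$ is Gorenstein projective with $\mathrm{pd}(M)=d<\infty$, then the cosyzygy sequence $0\to M\to P\to M'\to 0$ with $P$ projective and $M'\in\mathrm{GProj}(S)$ yields $\Ext^{i+1}(M',N)\cong\Ext^i(M,N)$ for all $i\geq 1$, so $\mathrm{pd}(M')=d+1$ unless $d=0$. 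Iterating contradicts the bound $n$ on the Gorenstein dimension of $S$, so $d=0$. The standard projective cotorsion pair is then visibly functorially complete.

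The main step is to verify that $(\C,\T\cap\F)=(\mathrm{GProj}(S),\W)$ is a functorially complete cotorsion pair. The inclusions $\mathrm{GProj}(S)\subseteq{^\perp\W}$ and $\W\subseteq\mathrm{GProj}(S)^\perp$ are precisely the $\Ext$-vanishing characterisation of Gorenstein projectivity over a Gorenstein ring recalled before the theorem. For the reverse inclusions one uses that, by dimension shifting against the bound $n=\mathrm{id}(S)$, the $n$-th syzygy $\Omega^nM$ in any projective resolution of an $S$-module $M$ is Gorenstein projective, since $\Ext^i(\Omega^nM,P)\cong\Ext^{i+n}(M,P)=0$ for every projective $P$ and $i\geq 1$. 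Splicing the truncated resolution back in produces a short exact sequence
\[
0\longrightarrow Y\longrightarrow G\longrightarrow M\longrightarrow 0
\]
with $G\in\mathrm{GProj}(S)$ and $Y$ of projective dimension at most $n-1$. If $M\in{^\perp\W}$ then this sequence splits and $M$ is a summand of $G$, hence Gorenstein projective. A dual construction, embedding $N$ into a module of finite projective dimension with Gorenstein projective cokernel, yields $\mathrm{GProj}(S)^\perp\subseteq\W$.

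The main obstacle is to make the factorisations functorial and, along the way, to exhibit generating (trivial) cofibrations. The plan is to exploit that, because $S$ is noetherian, the class $\mathrm{GProj}(S)$ is \emph{deconstructible}: there exists a set $\mathcal{S}\subseteq\mathrm{GProj}(S)$ such that every Gorenstein projective module is a transfinite extension of members of $\mathcal{S}$. The Eklof--Trlifaj small object argument applied to $\mathcal{S}$ then manufactures the required functorial special precovers and preenvelopes for $(\mathrm{GProj}(S),\W)$, and the associated sets of monomorphisms with cokernels in $\mathcal{S}$ and in $\mathrm{Proj}(S)$ serve as generating cofibrations $\mathcal{I}$ and generating trivial cofibrations $\mathcal{J}$, so Theorem~\ref{cotorsion_pairs}(ii) delivers the cofibrantly generated model structure.
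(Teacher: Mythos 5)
Your argument is correct and reproduces, in essentially the same way, the proof of the cited source: the paper states this result without proof as \cite{Hov2}, Theorem 8.6, and Hovey's own argument is exactly the reduction to Theorem \ref{cotorsion_pairs}(ii) via the identity $\mathrm{GProj}(S)\cap\W=\mathrm{Proj}(S)$, the syzygy/splicing construction of special precovers (which the paper itself sketches right after the theorem, following \cite{EJ11}, Theorem 11.5.1), and cogeneration of $(\mathrm{GProj}(S),\W)$ by a set to get functorial completeness and cofibrant generation. Your only deviation is cosmetic: you invoke deconstructibility of $\mathrm{GProj}(S)$ and the Eklof--Trlifaj argument where Hovey directly exhibits a cogenerating set of $n$-th syzygies of cyclic modules; both routes are standard and equivalent here.
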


We call this the {\it Gorenstein projective model structure} on $\Mod(S)$. The class $\C$ (resp.\ $\F$, resp.\ $\T$) of cofibrant (resp.\ fibrant, resp.\ trivial) objects is that of Gorenstein projective modules (resp.\ all modules, resp.\ modules of finite projective dimension). By \cite{Hov2}, Corollary 8.5, the class $\C\cap\T$ is that of projective $S$-modules. The subcategory $\T$ of $\Mod(S)$ is thick and both $(\C\cap\T,\F)$ and $(\C,\F\cap\T)$ are functorially complete cotorsion pairs. Letting $\mathcal{P}$ denote the class of all short exact sequences in $\Mod(S)$ the Gorenstein projective model structure on $\Mod(S)$ is obtained from these data as in Theorem \ref{cotorsion_pairs} (ii). Since all objects are fibrant and since the class of Gorenstein projective modules is closed under kernels of epimorphisms the Gorenstein projective model structure on $\Mod(S)$ is abelian and hereditary. In particular, it is stable which is also proven directly in \cite{Hov2}, Theorem 9.3.\\

Assume that $S$ is Gorenstein. Given an $S$-module $M$ the cofibrant replacement functor $Q$ for the Gorenstein projective model structure on $\Mod(S)$ gives a functorial exact sequence
\[
0\longrightarrow KM\longrightarrow QM\longrightarrow M\longrightarrow 0
\]
in which $QM$ is Gorenstein projective and $KM$ has finite projective dimension. Let us recall from \cite{EJ11}, Theorem 11.5.1, or \cite{Zha08}, Theorem 3.5, how to construct such an exact sequence at least non-functorially.\\

Denote by $n$ the selfinjective dimension of $S$. One first chooses a projective resolution $Q_\bullet\to M\to 0$ of $M$ and sets $G=\coker(Q_{n+1}\to Q_n)$. Then $G$ is Gorenstein projective by \cite{EJ11}, Theorem 10.2.14. By definition, $G$ admits a complete projective resolution $P_\bullet$ as above. By shifting we see that also $N=\im(P_{-n}\to P_{-n-1})=\ker(P_{-n-1}\to P_{-n-2})$ is Gorenstein projective. Moreover, the exact sequence $0\to G\to P_{-1}\to\cdots\to P_{-n}\to N\to 0$ is $\Hom_S(\cdot,Q)$-acyclic for any projective $S$-module $Q$. Therefore, we can inductively choose $S$-linear maps $P_i\to Q_{n+i}$ to obtain a commutative diagram
\[
\xymatrix{
0\ar[r]&G\ar[r]\ar@{=}[d]&P_{-1}\ar[r]\ar[d]&\cdots\ar[r]&P_{-n}\ar[r]\ar[d]&N\ar[r]\ar[d]&0\\
0\ar[r]&G\ar[r]&Q_{n-1}\ar[r]&\cdots\ar[r]&Q_0\ar[r]&M\ar[r]&0.
}
\]
The mapping cone of the vertical homomorphism yields an exact sequence $0\to P_{-1}\to P_{-2}\oplus Q_{n-1}\to \cdots \to P_{-n}\oplus Q_1\to N\oplus Q_0\to M\to 0$. Here $N\oplus Q_0$ is Gorenstein projective and the kernel of the epimorphism to $M$ has projective dimension at most $n-1$.
\begin{rem}\label{stable_modules}
Denote by $\Proj(S)$ and $\mathrm{GProj}(S)$ the full subcategory of $\Mod(S)$ consisting of all projective and all Gorenstein projective $S$-modules, respectively. The results of \cite{Hov2}, Proposition 9.1 and Proposition 9.2, show that the homotopy category $\Ho(\Mod(S))$ can be viewed as the full subcategory $\mathrm{GProj}(S)/\mathrm{Proj}(S)$ of the stable module category $\Mod(S)/\mathrm{Proj}(S)$ of $S$ consisting of all Gorenstein projective modules.
\end{rem}
We point out that Theorem \ref{GP_R_Mod} admits a variant which is based on the dual notion of a {\it Gorenstein injective} module (cf.\ \cite{Hov2}, Theorem 8.4). The corresponding homotopy category is equivalent to the quotient $\mathrm{GInj}(S)/\mathrm{Inj}(S)$ of the category $\mathrm{GInj}(S)$ of Gorenstein injective modules by the category $\mathrm{Inj}(S)$ of injective $S$-modules (cf.\ \cite{Hov2}, Proposition 9.1 and Proposition 9.2). It was shown by Krause in \cite{Kra05}, Proposition 7.13, that the latter is equivalent to the {\it singularity category} $K_\ac(\mathrm{Inj}(S))$ of $S$, i.e.\ to the category of acyclic complexes of injective $S$-modules up to chain homotopy. From the model categorical point of view this was taken up by Becker in \cite{Beck14} who again treats the projective and the injective situation simultaneously. We point out that results of this form have their origin in the seminal but unpublished work \cite{Buc} of Buchweitz. We refer to the end of \cite{Kra05}, \S7, for a more comprehensive list of historical remarks.

\begin{thm}[Becker]\label{Becker}
\begin{enumerate}[wide]
	\item[(i)]For any ring $S$ there is a cofibrantly generated model structure on $\Ch(S)$ for which 
	\begin{itemize}
		\item the cofibrations are the monomorphisms with acyclic and termwise projective cokernel,
		\item the fibrations are the epimorphisms,
		\item the trivial objects are the complexes $Y$ satisfying $\Ext^1_{\Ch(S)}(X,Y)=0$ for all termwise projective acyclic complexes $X$.
	\end{itemize}
	Its homotopy category is equivalent to the category $K_\ac(\mathrm{Proj}(S))$ of acyclic complexes of projective $S$-modules up to chain homotopy.
	\item[(ii)]If $S$ is a Gorenstein ring then the adjoint functors 
\[
Q_0:\Ch(S)\rightleftarrows\Mod(S):\iota_0
\]
defined in (\ref{left_right_adjoints}) form a Quillen equivalence for the model structure on $\Ch(S)$ as in (i) and the Gorenstein projective model structure on $\Mod(S)$. 
\end{enumerate}
\end{thm}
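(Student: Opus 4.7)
The plan is to invoke Hovey's Theorem \ref{cotorsion_pairs}(ii) with $\mathcal{P}$ the class of all short exact sequences, $\F = \Ch(S)$, $\C$ the class of acyclic termwise projective complexes, and $\T = \C^\perp$ the class of complexes satisfying the stated $\Ext^1_{\Ch(S)}$-vanishing condition. The key input is that $(\C, \T)$ is a functorially complete cotorsion pair: since $\Ch(S)$ is a Grothendieck category (Lemma \ref{complexes_Grothendieck}(i)) and $\C$ is generated by the set of disk complexes $D^nP = (0 \to P \stackrel{\id}{\to} P \to 0)$ as $P$ runs over a set of projective generators of $\Mod(S)$, completeness follows from the cogenerated-cotorsion-pair criterion (e.g.\ \cite{Hov2}, Corollary 6.8). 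The second cotorsion pair $(\C\cap\T, \F)$ coincides with $(\Proj(\Ch(S)), \Ch(S))$, using the standard description of projective complexes as split acyclic termwise projective complexes; it is trivially complete. Thickness of $\T$ under short exact sequences is immediate from the long exact $\Ext$-sequence. The fibrations of the resulting model structure are then the epimorphisms since every object is fibrant, and the identification $\Ho(\Ch(S)) \simeq K_\ac(\Proj(S))$ follows from (\ref{main_thm_model_cat}) once one verifies that the model-categorical homotopy on cofibrant-fibrant objects agrees with ordinary chain homotopy, which is done by exhibiting the standard chain-homotopy cylinder as a cylinder object.

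\textbf{Part (ii).} For the Quillen adjunction, $\iota_0$ visibly preserves epimorphisms, so it remains to check preservation of trivial fibrations; equivalently, that if $K$ has finite projective dimension then $\iota_0 K \in \T$. A finite projective resolution $0 \to P_n \to \cdots \to P_0 \to K \to 0$ realizes $\iota_0 K$ as quasi-isomorphic to a bounded complex $P_\bullet$ of projectives, and a direct dimension-shift computation of $\Ext^1_{\Ch(S)}(X, P_\bullet)$ for $X \in \C$ yields the required vanishing.

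For the Quillen equivalence the plan is to apply \cite{HovBook}, Corollary 1.3.16: since every object on both sides is fibrant, it suffices to check that $\iota_0$ reflects weak equivalences and that the derived unit $X \to \iota_0 Q_0 X$ is a weak equivalence for every cofibrant $X \in \Ch(S)$. The central input -- and what I expect to be the main obstacle -- is that over a Gorenstein ring every module admits a complete projective resolution, constructed by splicing an ordinary projective resolution with the complete resolution of a sufficiently high, necessarily Gorenstein projective, syzygy as recalled after Theorem \ref{GP_R_Mod}. For a cofibrant $X$, the adjunct of $\id_{Q_0 X}$ yields a natural unit map $X \to \iota_0 Q_0 X$ whose kernel can be identified, via the complete-resolution machinery, with a complex quasi-isomorphic to a bounded complex of modules of finite projective dimension; by the trivial-fibration argument above this kernel lies in $\T$, making the unit a weak equivalence. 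Combined with Remark \ref{stable_modules}, this identifies $LQ_0$ and $R\iota_0$ as mutually inverse equivalences between $K_\ac(\Proj(S))$ and $\mathrm{GProj}(S)/\Proj(S)$.
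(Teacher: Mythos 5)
The paper's own proof is a direct citation of Becker (\cite{Beck14}, Proposition 2.2.1 and Proposition 3.1.3), so any self-contained argument has to supply the substance of those results, and your sketch does not. The first gap is in part (i): the class $\C$ of acyclic termwise projective complexes is \emph{not} generated (or cogenerated) by the disk complexes $D^nP$. The $D^nP$ are projective objects of $\Ch(S)$, so the cotorsion pair they cogenerate is the trivial one $(\Proj(\Ch(S)),\Ch(S))$; transfinite extensions of disks only produce contractible complexes, never the non-contractible acyclic complexes of projectives that make $K_\ac(\Proj(S))$ interesting. Hence completeness of $(\C,\C^\perp)$ does not follow from \cite{Hov2}, Corollary 6.8, as you invoke it. Establishing that $(\C,\C^\perp)$ is a complete cotorsion pair is precisely the hard content of Becker's Proposition 2.2.1 (1); the standard substitute is the deconstructibility of the class of acyclic complexes with projective (or, more generally, deconstructible) terms, which is the route the paper itself takes later in Proposition \ref{sing_proj_Frobenius} via \cite{St14}, Lemmas 7.9 and 7.10.

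The second gap is in your treatment of the derived unit in part (ii). For cofibrant $X$ the unit $X\to\iota_0Q_0X$ is an epimorphism whose kernel splits as $L\oplus R$ with $L=(\cdots\to X_1\to Z_0X\to 0)$ acyclic and $R=(0\to X_{-1}\to X_{-2}\to\cdots)$; the latter is quasi-isomorphic to the Gorenstein projective module $Q_0X\cong Z_{-1}X$ placed in degree $-1$, not to a bounded complex of modules of finite projective dimension. More seriously, even a correct quasi-isomorphism type would prove nothing: membership in $\T=\C^\perp$ is not a quasi-isomorphism invariant in this model structure, since every acyclic complex of projectives is quasi-isomorphic to $0$ but is trivial only if it is contractible. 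What is actually needed is a direct verification that $\Ext^1_{\Ch(S)}(X',L)=\Ext^1_{\Ch(S)}(X',R)=0$ for all $X'\in\C$, which uses the Gorenstein hypothesis through the total acyclicity of acyclic complexes of projectives (equivalently, the Gorenstein projectivity of their cycles); this, together with the unproved assertion that $\iota_0$ reflects weak equivalences, is exactly the content of Becker's Proposition 3.1.3. Your identification $\Ho(\Ch(S))\simeq K_\ac(\Proj(S))$ via the standard cylinder, and the observation that trivial fibrations in $\Mod(S)$ are preserved by $\iota_0$, are fine, but the two points above are the load-bearing steps and both are missing.
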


\begin{proof}
Part (i) is \cite{Beck14}, Proposition 2.2.1 (1), which is applied as in \cite{Beck14}, \S3. The description of the corresponding homotopy category follows from \cite{Beck14}, Proposition 2.2.1 (1) and Example 1.4.7. The main point is that the right homotopy relation between cofibrant objects is the usual chain homotopy of complexes. To see this note that the trivially cofibrant objects are the projective complexes as is shown in the proof of \cite{Beck14}, Proposition 2.2.1 (1).  Therefore, one can argue as in \cite{Hov2}, Proposition 9.1. Finally, part (ii) is \cite{Beck14}, Proposition 3.1.3.
\end{proof}
We call this the {\it singular projective model structure} on $\Ch(S)$. Its existence can also be deduced from the general result \cite{EJ11b}, Theorem 7.2.15, of Enochs and Jenda. The class $\C$ (resp.\ $\F$) of cofibrant (resp.\ fibrant) objects consists of the acyclic and termwise projective (resp.\ all) complexes. The class $\T$ of trivial objects is given by the condition $\T=\C^\perp$ as in Theorem \ref{Becker} (i). The singular projective model structure is obtained from $\C$, $\F$ and $\T$ as in Theorem \ref{cotorsion_pairs} (ii) where we let $\mathcal{P}$ denote the class of all short exact sequences in $\Ch(S)$. Again, this is an abelian hereditary model structure and the associated model category is stable (cf.\ \cite{Beck14}, Corollary 1.1.15). The loop and suspension functors can be made explicit as follows.  

\begin{lem}\label{explicit_loop_susp}
The adjunction $[-1]:\Ch(S)\rightleftarrows\Ch(S):[1]$ is a Quillen equivalence for the singular projective model structure on $\Ch(S)$. The left derived functor of $[-1]$ coincides with the suspension functor $\Sigma$ and the right derived functor of $[1]$ coincides with the loop functor $\Omega$.
\end{lem}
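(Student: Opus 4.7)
The plan is to split the claim into three steps: (i) verify the Quillen equivalence directly, (ii) identify $\Sigma$ with $L[-1]$ via a mapping cone, and (iii) deduce $\Omega\cong R[1]$ by uniqueness of adjoints.

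For (i), the shifts $[1]$ and $[-1]$ are inverse exact autoequivalences of $\Ch(S)$. They preserve monomorphisms, epimorphisms, termwise projectivity, and acyclicity, hence they preserve the cofibrations (monomorphisms with acyclic termwise projective cokernel) and the fibrations (epimorphisms) of the singular projective model structure. Moreover, since the class $\C$ of acyclic termwise projective complexes is shift-stable, so is $\T=\C^\perp$, and shifts preserve the class of trivial objects as well. Thus both $[-1]$ and $[1]$ are simultaneously left and right Quillen. Because the unit and counit of the adjunction $[-1]\dashv[1]$ are identities in $\Ch(S)$, the derived functors $L[-1]$ and $R[1]$ are automatically inverse equivalences of $\Ho(\Ch(S))$, giving the Quillen equivalence.

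For (ii), let $X\in\Ch(S)$ be cofibrant, so $X$ is acyclic and termwise projective. Apply the recipe for $\Sigma$ in an abelian model category from the end of \S\ref{section_1} to the mapping cone sequence
\[
0\longrightarrow X\stackrel{\iota}{\longrightarrow}P\stackrel{\pi}{\longrightarrow}X[-1]\longrightarrow 0,
\]
where $P_n=X_n\oplus X_{n-1}$ with differential $d(x,y)=(dx+y,-dy)$, $\iota(x)=(x,0)$, and $\pi(x,y)=y$. The middle term $P$ is split acyclic and termwise projective, hence a projective object of $\Ch(S)$. The cotorsion pair $(\C\cap\T,\F)$ attached to the singular projective model structure has $\F$ equal to all of $\Ch(S)$, which forces $\C\cap\T$ to be the class of projective objects of $\Ch(S)$; in particular $P$ is trivially fibrant. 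The quotient $X[-1]$ is cofibrant because $[-1]$ preserves the cofibrant class. The recipe then yields $\Sigma X\cong X[-1]$ in $\Ho(\Ch(S))$, naturally in $X$. Since $L[-1]X\cong(QX)[-1]$ agrees with this prescription on arbitrary $X$, we obtain $\Sigma\cong L[-1]$.

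For (iii), the singular projective model structure is hereditary and hence stable by \cite{Beck14}, Corollary 1.1.15, so $\Sigma\dashv\Omega$ is an adjunction of mutually quasi-inverse autoequivalences of $\Ho(\Ch(S))$. Step (i) shows that $L[-1]\dashv R[1]$ is likewise such an adjunction, and Step (ii) gives $\Sigma\cong L[-1]$. Uniqueness of right adjoints forces $\Omega\cong R[1]$.

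The main technical point is the assertion in Step (ii) that $P$ lies in $\T$. This is immediate from the cotorsion pair identification $\C\cap\T=\{\text{projective objects of }\Ch(S)\}$ combined with the fact that a split acyclic termwise projective complex is projective in $\Ch(S)$. If one prefers a direct verification, any extension $0\to P\to E\to Y\to 0$ with $Y$ acyclic and termwise projective is automatically termwise split, and the chain-level splitting is then produced from any chain contraction of $P$ (e.g.\ $s(x,y)=(0,-x)$) by a short computation of the same shape as the one in the proof of Proposition \ref{coanodyne_is_qi}.
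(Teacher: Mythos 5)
Your proof is correct, and steps (i) and (iii) match the paper's argument (shift-stability of the cofibrations, fibrations and trivial objects, plus uniqueness of adjoints at the end). The interesting divergence is in step (ii). The paper takes the factorization of $X\to 0$ supplied by the model structure, obtaining an exact sequence $0\to X\to Y\to Z\to 0$ with $Y$ trivial and $Z$ cofibrant, and then must still \emph{compare} $Z$ with $X[-1]$: it observes that $Y$ is acyclic and termwise projective, hence a projective complex, hence contractible, so the standard connecting map $Z\to X[-1]$ is a chain homotopy equivalence between cofibrant objects and therefore a weak equivalence. You instead bypass the comparison entirely by exhibiting the mapping cone of $\id_X$ as an explicit instance of the recipe from \S\ref{section_1}: $0\to X\to\mathrm{Cone}(\id_X)\to X[-1]\to 0$ with the middle term split acyclic and termwise projective, hence projective in $\Ch(S)$, hence in $\C\cap\T$ and thus trivially fibrant (all objects being fibrant), while $X[-1]$ is cofibrant. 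Your route reads off $\Sigma X\cong X[-1]$ on the nose and is arguably cleaner, at the cost of implicitly relying on the fact that \emph{any} such approximation sequence computes the suspension (which is how the recipe is stated in \S\ref{section_1}, so this is legitimate); the paper's route only uses the sequence produced by the functorial factorization and pays for it with the contractibility-and-comparison argument. Your closing remark giving a direct verification that $\Ext^1_{\Ch(S)}(Y,P)=0$ for contractible $P$ and cofibrant $Y$ is a valid fallback and mirrors the computation in Proposition \ref{coanodyne_is_qi}.
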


\begin{proof}
There are isomorphisms $\Ext^1_{\Ch(S)}(X[-1],Y)\cong\Ext^1_{\Ch(S)}(X,Y[1])$ and $\Ext^1_{\Ch(S)}(X[1],Y)\cong\Ext^1_{\Ch(S)}(X,Y[-1])$ for all $X,Y\in\Ch(S)$. This implies that both functors preserve the class of trivial objects. By exactness they preserve all weak equivalences. Since they are inverse to each other this implies that they form a Quillen equivalence in both directions.\\

In order to compute the suspension functor let $X\in\Ch(S)$ be cofibrant and factorize $X\to 0$ into a cofibration followed by a trivial fibration. This gives an exact sequence $0\to X\to Y\to Z\to 0$ where $Y$ is trivial and $Z$ is cofibrant. Then $Z$ is the suspension of $X$ in $\Ho(\Ch(S))$. Since $Z$ is termwise projective the above sequence is termwise split. As usual, we obtain a map of complexes $Z\to X[-1]$ which is unique up to chain homotopy. However, together with $X$ also $Y$ is cofibrant since $X\to Y$ is a cofibration. As remarked in the proof of Theorem \ref{Becker} this implies that $Y$ is a projective complex, hence is contractible. It follows that the map $Z\to X[-1]$ is a homotopy equivalence in the usual sense of chain complexes. As remarked earlier this implies that $Z\to X[-1]$ is a weak equivalence because $Z$ and $X[-1]$ are cofibrant. This implies $L[-1]\cong\Sigma$. That $R[1]\cong\Omega$ follows formally from the uniqueness of adjoints.
\end{proof}

Note that if $S$ is a Gorenstein ring then the Gorenstein projective model structure on $\Mod(S)$ is Quillen equivalent to its Gorenstein injective counterpart (cf.\ \cite{Hov2}, page 583). It follows from the results of Becker that the corresponding homotopy category is also equivalent to the category $K_\ac(\mathrm{Inj}(S))$ of complexes of injective $S$-modules up to chain homotopy (cf.\ \cite{Beck14}, Proposition 2.2.1 (2) and Proposition 3.1.5).\\

By a result of Krause, the full subcategory of compact objects of $K_\ac(\mathrm{Inj}(S))$ is equivalent to the quotient of the bounded derived category $D^b(\mathrm{noeth}(S))$ of noetherian $S$-modules by the subcategory of perfect complexes (cf.\ \cite{Kra05}, Corollary 5.4 and \cite{Sta21}, 07LT). Quotients of this form are usually called {\it singularity categories} in algebraic geometry. This is where the name for the above model structure derives from. We would also like to emphasize that the results of \cite{Beck14} and \cite{Kra05} are much more precise in the sense that they are derived from the existence of various {\it recollements} between triangulated categories.


\section{The right transfer to smooth $G$-representations}\label{section_4}

%
%


We continue to denote by $k$ a field of characteristic $p$ and by $G$ a topological group admitting an open subgroup $I$ which is pro-$p$. We wish to apply the results of \S\ref{section_3} to the ring $S=H=\End_G(\XX)^\op$. In the following we will therefore often make the assumption that the ring $H$ is Gorenstein.
\begin{ex}\label{H_Gorenstein}The ring $H$ is Gorenstein in the following situations.
\begin{itemize}[wide]
\item[(i)]Let $G$ be a finite group with a split $BN$-pair of characteristic $p$. Recall that then $B$ admits a normal $p$-Sylow subgroup $U$ such that $B=U\rtimes T$ with $T=B\cap N$. Setting $I=U$ and endowing $G$ with the discrete topology the Hecke algebra $H$ is a Frobenius algebra by a result of Tinberg (cf.\ \cite{Tin}, Proposition 3.7). In this case $H$ is even selfinjective. As an example one may take the group $G=\mathbb{G}(K)$ of rational points of a split connected reductive group $\mathbb{G}$ over a finite field $K$ of characteristic $p$ and $I=U$ the group of rational points of the unipotent radical of a Borel subgroup of $\mathbb{G}$.
\item[(ii)]Let $\mathbb{G}(K)$ denote the group of rational points of a split connected reductive group $\mathbb{G}$ over a nonarchimedean local field $K$ of residue characteristic $p$. The group $\mathbb{G}(K)$ carries a natural locally profinite topology. Denote by $\mathcal{X}$ the semisimple Bruhat-Tits building of $\mathbb{G}(K)$, by $C$ a chamber of $\mathcal{X}$ and by $\sigma$ a facet of $\mathcal{X}$ contained in the closure of $C$. Denote by $P_\sigma$ the parahoric subgroup of $\sigma$ and by $I_C$ the pro-$p$ radical of $P_C$. Both of them are compact open subgroups of $\mathbb{G}(K)$ and we have $I_C\subseteq P_\sigma$. If $G=P_\sigma$ and if $I=I_C$ then the ring $H$ is isomorphic to the Hecke algebra of a finite group with a split $BN$-pair of characteristic $p$ as in (i). Therefore, $H$ is a Frobenius algebra and hence is selfinjective.
\item[(iii)]We keep the notation of (ii). If $P_\sigma^\dagger$ denotes the stabilizer of $\sigma$ in $\mathcal{X}$ then $P_\sigma^\dagger$ is an open subgroup of $\mathbb{G}(K)$ containing $P_\sigma$. If $G=P_\sigma^\dagger$ and if $I=I_C$ then $H$ is a Gorenstein ring by a result of Ollivier and Schneider (cf.\ \cite{OS14}, Proposition 5.5). Its selfinjective dimension is equal to the rank of the center of $\mathbb{G}$.
\item[(iv)]We keep the notation of (ii). If $G=\mathbb{G}(K)$ and if $I=I_C$ then $H$ is a Gorenstein ring by a result of Ollivier and Schneider (cf.\ \cite{OS14}, Theorem 0.1). Its selfinjective dimension is bounded above by the rank of $\mathbb{G}$. 
\end{itemize}
\end{ex}
In all of the above examples the characteristic of $k$ may in fact be arbitrary. However, we will continue to assume that $k$ is of characteristic $p$. In the situation of Example \ref{H_Gorenstein} (iv) the group $I=I_C$ is a called a {\it pro-$p$ Iwahori subgroup} of $G=\mathbb{G}(K)$  and $H$ is called the corresponding {\it pro-$p$ Iwahori-Hecke algebra} over $k$. Since this is the motivating example for our work we decided to stick to this terminology in general.

\begin{rem}\label{homotopy_cat_trivial}
If the Gorenstein ring $H$ has finite global dimension then all $H$-modules have finite projective dimension and $\Ho(\Mod(H))=0$. In the above examples this happens only in exceptional cases. In the situation of Example \ref{H_Gorenstein} (iv) for instance the global dimension of $H$ was studied in \cite{OS14}, \S7. If the semisimple rank of $\mathbb{G}$ is positive and if the residue class field of $K$ is not $\mathbb{F}_2$ then the global dimension of $H$ is infinite (cf.\ \cite{OS14}, Corollary 7.2 and Lemma 7.3). Moreover, the {\it supersingular} $H$-modules tend to have infinite projective dimension (cf.\ \cite{Koz}, \S1, for a more precise result). 
\end{rem}

For the following result recall that we denote by $B_0$ the $0$-th boundary functor introduced in \S\ref{section_1}.

\begin{prop}\label{GP_split_mono} Let $Y=(Y_\bullet,d_\bullet)$ be an acyclic complex of $H$-modules.
\begin{itemize}
\item[(i)]There is a functorial exact sequence ${0\to H_1FY\to FB_0Y\to B_0FY\to 0}$. 
\item[(ii)]If $Y$ is termwise projective then the canonical map $B_0Y\to UB_0FY$ is an $H$-linear bijection and the sequence in (i) is $I$-exact. Moreover, the unit $\eta_{B_0Y}:B_0Y\to UFB_0Y$ is a split monomorphism with cokernel isomorphic to $UH_1FY$.
\end{itemize}
\end{prop}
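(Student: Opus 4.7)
The plan is to deduce~(i) from right exactness of $F=\XX\otimes_H(\cdot)$ together with the identifications $Z_nY=B_nY$ coming from acyclicity, and then for~(ii) to exploit Lemma~\ref{unit_proj}, which guarantees $\eta_{Y_n}\colon Y_n\xrightarrow{\cong}UFY_n$ whenever $Y_n$ is projective.

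For~(i): acyclicity gives $B_0Y=\coker(d_2\colon Y_2\to Y_1)$ and $Z_1Y=B_1Y$. Applying the right exact functor $F$ to the short exact sequence $0\to B_1Y\to Y_1\to B_0Y\to 0$ yields $FB_0Y\cong\coker(FB_1Y\to FY_1)$. Running the same reasoning one degree up shows $FY_2\twoheadrightarrow FB_1Y$, so that $\im(FB_1Y\to FY_1)=\im(Fd_2)=B_1FY$. Therefore $FB_0Y\cong FY_1/B_1FY$, while by definition $B_0FY=\im(Fd_1)=FY_1/Z_1FY$. The natural surjection $\pi\colon FB_0Y\twoheadrightarrow B_0FY$ induced by $B_1FY\subseteq Z_1FY$ has kernel $Z_1FY/B_1FY=H_1FY$, giving the exact sequence of~(i); functoriality is automatic from the construction.

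For~(ii), assume $Y$ is termwise projective, so every $\eta_{Y_n}$ is an isomorphism. By naturality of $\eta$, the canonical map $B_0Y\to UB_0FY$ is the restriction of $\eta_{Y_0}$ to $B_0Y\subseteq Y_0$, with image in $UB_0FY\subseteq UFY_0$. Since $B_0Y=Z_0Y=\ker d_0$ by acyclicity, naturality combined with the invertibility of $\eta_{Y_0}$ and $\eta_{Y_{-1}}$ identifies $\eta_{Y_0}(B_0Y)=\ker(UFd_0)=UZ_0FY$. On the other hand, naturality and invertibility of $\eta_{Y_1}$ give $\eta_{Y_0}(B_0Y)=\eta_{Y_0}(\im d_1)=\im(UFd_1)$. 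The trivial inclusions $\im(UFd_1)\subseteq UB_0FY\subseteq UZ_0FY$ are then forced to be equalities, so $\eta_{Y_0}$ restricts to an $H$-linear bijection $B_0Y\xrightarrow{\cong}UB_0FY$, which proves the first assertion.

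For the $I$-exactness of the sequence in~(i), applying $U$ gives left exactness automatically, and surjectivity of $U\pi\colon UFB_0Y\to UB_0FY$ follows because $UFd_1$ is surjective onto $UB_0FY$ (by the previous step) and factors as $UFY_1\to UFB_0Y\xrightarrow{U\pi}UB_0FY$. Finally, naturality of $\eta$ shows that the composition $B_0Y\xrightarrow{\eta_{B_0Y}}UFB_0Y\xrightarrow{U\pi}UB_0FY$ coincides with the canonical bijection of the previous paragraph; hence $\eta_{B_0Y}$ admits the retraction $(U\pi\circ\eta_{B_0Y})^{-1}\circ U\pi$ and is split monic, and its cokernel is $\ker(U\pi)=UH_1FY$ by the $I$-exact sequence. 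The main technical point is the equality $UB_0FY=UZ_0FY$, which fails for a general acyclic complex; projectivity of $Y$ enters exactly here via invertibility of the units $\eta_{Y_n}$ powering the diagram chase, while everything else is naturality and bookkeeping.
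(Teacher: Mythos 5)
Your proof is correct and follows essentially the same route as the paper: part (i) is the same computation (identifying $FB_0Y\cong FY_1/B_1FY$ and $B_0FY\cong FY_1/Z_1FY$, the paper phrasing it via the factorization $FY_1\twoheadrightarrow FB_0Y\to FY_0$ of $Fd_1$), and part (ii) likewise rests on the invertibility of the units $\eta_{Y_n}$ from Lemma \ref{unit_proj}, the identification $B_0Y\cong UB_0FY$ (which the paper gets from Lemma \ref{I_exact} and you reprove inline via the sandwich $\im(UFd_1)\subseteq UB_0FY\subseteq UZ_0FY$), and the observation that $U\pi\circ\eta_{B_0Y}$ is the canonical bijection, yielding the splitting and the cokernel.
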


\begin{proof}
Factorize $d_1$ into $Y_1\twoheadrightarrow B_0Y\hookrightarrow Y_0$ and apply the functor $F$. This gives a factorization $FY_1\stackrel{g}{\twoheadrightarrow}FB_0Y\stackrel{h}{\to}FY_0$ of $Fd_1$ with $\im(h)=\im(hg)=\im(Fd_1)=B_0FY$. Since $g$ is surjective we have $\ker(h)=g(\ker(hg))=g(\ker(Fd_1))=g(Z_1FY)$. However, the exact sequence $Y_2\to Y_1\to B_0Y\to 0$ gives the exact sequence $FY_2\stackrel{Fd_2}{\to}FY_1\stackrel{g}{\to}FB_0Y\to 0$. This shows that $\ker(g)=\im(Fd_2)=B_1FY$. Therefore, $g$ induces an isomorphism $H_1FY=Z_1FY/B_1FY=Z_1FY/\ker(g)\cong g(Z_1FY)=\ker(h)$ and we get the required exact sequence
\[
0\to H_1FY\to FB_0Y\to B_0FY\to 0.
\]
If $Y$ is termwise projective then the sequence $FY_1\to FY_0\to FY_{-1}$ is $I$-exact. In fact, the unit $\eta_Y:Y\to UFY$ of the adjunction is an isomorphism of complexes (cf.\ Lemma \ref{unit_proj}). Therefore, Lemma \ref{I_exact} shows that the canonical map $B_0Y\cong B_0UFY\to UB_0FY$ is bijective. Consider the composed map $FY_1\twoheadrightarrow FB_0Y\twoheadrightarrow B_0FY$. If we apply the functor $U$ then the resulting map $Y_1\cong UFY_1\to UFB_0Y\to UB_0FY\cong B_0Y=\im(d_1)$ is given by $d_1$ and hence is surjective. This implies that the map $FB_0Y\to B_0FY$ is an $I$-epimorphism. Now apply $U$ to the exact sequence in (i) to get a map $UFB_0Y\to UB_0FY\cong B_0Y$ with kernel $UH_1FY$. One checks directly that it is a left inverse of the unit $\eta_{B_0Y}$.
\end{proof}

\begin{cor}\label{ess_surj}
If $M\in\mathrm{GProj}(H)$ and if $N\in\Mod(H)$ then the map $\Hom_H(N,M)\to\Hom_G(FN,FM)$ induced by $F$ is injective. If $H$ is Gorenstein and if $\Mod(H)$ is endowed with the Gorenstein projective model structure then the composed functor $\Rep^\infty_k(G)\stackrel{U}{\longrightarrow}\Mod(H)\longrightarrow\Ho(\Mod(H))$ is essentially surjective.
\end{cor}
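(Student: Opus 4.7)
The plan is to derive both assertions as easy consequences of Proposition \ref{GP_split_mono}(ii), which provides two complementary handles on a Gorenstein projective $H$-module $M$: the unit $\eta_M$ is a split monomorphism, and the canonical map $B_0Y \to UB_0(FY)$ is an isomorphism whenever $Y$ is an acyclic, termwise projective complex with $M = B_0Y$.

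For the first claim, I would use the adjunction $F \dashv U$ to identify the map $\Hom_H(N,M) \to \Hom_G(FN,FM)$ with post-composition by $\eta_M$ under the natural bijection $\Hom_G(FN,FM) \cong \Hom_H(N,UFM)$; this follows from the naturality identity $UFf \circ \eta_N = \eta_M \circ f$. Injectivity is therefore equivalent to $\eta_M$ being a monomorphism. Since $M$ is Gorenstein projective, by definition one can write $M = B_0 Y$ for an acyclic complex $Y$ of projectives, and Proposition \ref{GP_split_mono}(ii) then gives that $\eta_M$ is in fact a \emph{split} monomorphism.

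For the essential surjectivity, I would first reduce to the case of Gorenstein projective $M$: for any $M \in \Mod(H)$ the cofibrant replacement $QM \to M$ in the Gorenstein projective model structure is a weak equivalence, so $M \cong QM$ in $\Ho(\Mod(H))$ and $QM \in \mathrm{GProj}(H)$. Assuming $M$ is Gorenstein projective, pick any acyclic complex $Y$ of projective $H$-modules with $M = B_0Y$ and set
\[
V \;:=\; B_0(FY) \;\in\; \Rep^\infty_k(G),
\]
formed by taking the zeroth boundary of the termwise image of $Y$ under $F$. Since $Y$ is termwise projective, Proposition \ref{GP_split_mono}(ii) tells us that the canonical map $M = B_0Y \to UV = U B_0(FY)$ is an $H$-linear isomorphism; in particular $UV \cong M$ already in $\Mod(H)$, and a fortiori in $\Ho(\Mod(H))$.

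I expect no real obstacle once Proposition \ref{GP_split_mono} is available. The only subtle point is the choice of $V$ in the second part: the naive candidate $V = FM$ only yields $UFM \cong M \oplus UH_1(FY)$ (again via the splitting in Proposition \ref{GP_split_mono}(ii)), and the second summand need not be trivial in the Gorenstein projective model structure, so this choice would not produce an isomorphism in $\Ho(\Mod(H))$. Passing to $V = B_0(FY)$ rather than $FM$ is precisely what cuts out the offending cokernel $UH_1(FY)$.
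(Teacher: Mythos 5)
Your proof is correct and follows essentially the same route as the paper: both parts reduce to Proposition \ref{GP_split_mono}(ii), using the naturality identity $UFg\circ\eta_N=\eta_M\circ g$ together with the (split) injectivity of $\eta_M$ for the first claim, and the isomorphism $M=B_0Y\cong UB_0FY$ after cofibrant replacement for the second. Your closing remark on why the naive choice $V=FM$ fails is a correct and worthwhile observation, though not needed for the argument.
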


\begin{proof}
As for the first statement it suffices to show the injectivity of the map $\Hom_H(N,M)\to\Hom_H(N,UFM)$ obtained by composing with the adjunction isomorphism $\Hom_G(FN,FM)\cong\Hom_H(N,UFM)$. Explicitly, this composition is given by sending $g:N\to M$ to $UFg\circ\eta_N=\eta_M\circ g$. If we realize $M=B_0Y$ for some acyclic complex $Y$ of projective $H$-modules then $\eta_M$ is a monomorphism by Proposition \ref{GP_split_mono} (ii). This proves the first statement.\\

In the homotopy category $\Ho(\Mod(H))$ any $H$-module becomes isomorphic to its cofibrant replacement. For the second statement it therefore suffices to show that any Gorenstein projective $H$-module is contained in the essential image of the functor $U$. This follows from Proposition \ref{GP_split_mono} (ii) because $M=B_0Y\cong UB_0FY$.
\end{proof}

The previous results can be interpreted in terms of Quillen adjunctions for suitable model structures on $\Rep^\infty_k(G)$ and $\Ch(G)$. For the rest of this section we endow $\Ch(H)$ with the singular projective model structure from Theorem \ref{Becker} (i). If $H$ is Gorenstein we will always endow $\Mod(H)$ with the Gorenstein projective model structure from Theorem \ref{GP_R_Mod}. As before let $F:\Mod(H)\rightleftarrows \Rep^\infty_k(G):U$ be the adjunction given by $FM=\XX\otimes_HM$ and $UV=V^I\cong\Hom_G(\XX,V)$. By the same symbols we denote its termwise extension to an adjunction $F:\Ch(H)\rightleftarrows \Ch(G):U$.

\begin{prop}\label{SP_right_transfer}
The right transfer of the singular projective model structure exists along the adjunction $F:\Ch(H)\rightleftarrows \Ch(G):U$ and makes it a Quillen equivalence. Endowing $\Ch(G)$ with the right transfer there is an equivalence of categories
\[ 
K_\ac(\Proj(H))\cong \Ho(\Ch(G)) .
\]
\end{prop}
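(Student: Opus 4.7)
The plan is to apply Theorem \ref{right_transfer}. The singular projective model structure on $\Ch(H)$ is cofibrantly generated by Theorem \ref{Becker} (i), and $\Ch(G)$ is a Grothendieck category by Lemma \ref{complexes_Grothendieck} (i), so it is bicomplete with all objects small. What remains is to verify that every anodyne morphism in $\Ch(G)$ is a weak equivalence, and I would do this via Proposition \ref{path_objects}: the fibrations in the singular projective model structure on $\Ch(H)$ are precisely the epimorphisms, so every object of $\Ch(H)$ is fibrant, and it suffices to produce path objects in $\Ch(G)$.

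The key construction is an explicit, functorial path object. Given $V\in\Ch(G)$, I would set $P_n=V_n\oplus V_{n+1}\oplus V_n$ with differential $d_P(a,h,b)=(d_V a,\;a-b-d_V h,\;d_V b)$, and define $i:V\to P$ by $v\mapsto (v,0,v)$ and $p:P\to V\oplus V$ by $(a,h,b)\mapsto (a,b)$. Then $p\circ i=\Delta$, the map $p$ is termwise split surjective, and there is a termwise split short exact sequence $0\to V\xrightarrow{i} P\to C\to 0$ where $C$ is the mapping cone of $\id_V$. Applying the additive functor $U$ preserves this termwise split sequence, so $Up$ is termwise split surjective (hence a fibration in the transfer candidate) and the cokernel of $Ui$ is $UC$, which is contractible since mapping cones of identities are always so. The crux is to show that $UC$ is a \emph{trivial} object of the singular projective model structure, i.e.\ that $\Ext^1_{\Ch(H)}(X,UC)=0$ for every termwise projective acyclic $X$. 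I would verify this directly: if $h:UC\to UC[1]$ is a contracting homotopy and $0\to UC\to E\to X\to 0$ is an extension classified by a cocycle $f$, then $s=-hf$ provides a splitting, so every such extension splits. Since $Ui$ is a monomorphism with trivial cokernel it is a weak equivalence in the singular projective model structure, so $i$ is a weak equivalence in the transferred structure, completing the construction of $P$ as a path object.

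With the right transfer established, it remains to show it is a Quillen equivalence and identify the homotopy category. By Lemma \ref{unit_equiv} it suffices to check that the unit $\eta_X:X\to UFX$ is a weak equivalence for every cofibrant $X\in\Ch(H)$. But cofibrant objects are acyclic and termwise projective (Theorem \ref{Becker} (i)), and Lemma \ref{unit_proj}, applied termwise, yields that $\eta_X$ is an isomorphism. Finally, since $\Ho(\Ch(H))$ for the singular projective model structure is $K_\ac(\Proj(H))$ by Theorem \ref{Becker} (i), the induced equivalence $LF:\Ho(\Ch(H))\rightleftarrows \Ho(\Ch(G)):RU$ gives the stated equivalence $K_\ac(\Proj(H))\cong\Ho(\Ch(G))$.

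The main obstacle is the verification that the contractible cokernel $UC$ is trivial in the singular projective model structure, since "trivial object" here is defined by an orthogonality condition with respect to \emph{arbitrary} termwise projective acyclic complexes and is genuinely more restrictive than being quasi-isomorphic to zero. Everything else is a rather formal application of the transfer machinery together with the explicit description of cofibrant objects in Theorem \ref{Becker}.
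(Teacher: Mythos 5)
Your proposal is correct and follows the same overall strategy as the paper (reduce to Theorem \ref{right_transfer} and Proposition \ref{path_objects}, then conclude the Quillen equivalence via Lemma \ref{unit_equiv} and Lemma \ref{unit_proj}), but the heart of the argument --- the construction of path objects in $\Ch(G)$ --- is genuinely different. The paper forms $C=\coker(\Delta_V)$, chooses a surjection $f:Q\to UC$ from a projective complex $Q$, and takes $P$ to be the pullback of $V\times V\to C\leftarrow FQ$; the point of this detour is that $\coker(Ui)\cong Q$ is then \emph{trivially cofibrant} (a projective complex), so $Ui$ is a trivial cofibration with no further work. You instead take the canonical mapping path space $P_n=V_n\oplus V_{n+1}\oplus V_n$, whose advantage is that it is explicit and functorial and requires no choice of projective cover, but whose loop part is only the cone of $\id_V$, so you must separately verify that the contractible complex $UC$ lies in the class $\T=\C^\perp$ of trivial objects and then invoke the fact (\cite{Hov2}, Lemma 5.8) that a monomorphism with trivial cokernel is a weak equivalence. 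Your verification is sound: since $X$ is termwise projective the extension $0\to UC\to E\to X\to 0$ is termwise split, hence classified by a degree $-1$ cycle $f$ in $\underline{\Hom}(X,UC)$, and the contracting homotopy of $UC$ turns $f$ into a boundary, splitting the extension. (It is worth stating explicitly that the termwise projectivity of $X$ is what makes the cocycle description of the extension valid.) The remainder of your argument --- the unit being an isomorphism on acyclic termwise projective complexes and the identification of $\Ho(\Ch(H))$ with $K_\ac(\Proj(H))$ --- coincides with the paper's.
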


\begin{proof}
Together with $\Rep^\infty_k(G)$ also $\Ch(G)$ is a Grothendieck category (cf.\ Lemma \ref{complexes_Grothendieck} (i)). In particular, all objects of $\Ch(G)$ are small. Since $\Ch(H)$ is cofibrantly generated the first result will follow from Theorem \ref{right_transfer} and Proposition \ref{path_objects} if we can show that path objects exist in $\Ch(G)$. To construct them let $V\in \Ch(G)$. We need a factorisation
$$
\Delta_V: V\stackrel{\alpha}{\longrightarrow} P \stackrel{\beta}{\longrightarrow}V\times V
$$
of the diagonal morphism such that $U\alpha$ is a weak equivalence and $U\beta$ is a fibration, i.e an epimorphism.\\

Let $C=\coker (\Delta_V)$ and let $\pi: V\times V\to C$ denote the canonical projection. Since the abelian category $\Ch(H)$ has enough projectives (cf.\ Lemma \ref{complexes_Grothendieck} (ii)) there is a surjection $f:Q\to UC$ where $Q$ is a projective complex. In terms of the singular projective model structure this is a fibration in which $Q$ is trivially cofibrant (cf.\ the proof of Theorem \ref{Becker}). Since $Q$ is termwise projective the unit of the adjunction $\eta_Q:Q \to UFQ$ is an isomorphism (cf.\ Lemma \ref{unit_proj}). Let $g:FQ\to C$ be the adjoint of $f$. Then $f=Ug\circ\eta_Q$ whence $Ug$ is surjective.\\

We now define $P=\{(v, q)\in (V\times V)\oplus FQ \;|\;\pi(v)=g(q)\}$ so that the projections $p:P\to V\times V$ and $h:P\to FQ$ make the square
\[
\begin{tikzcd}
P\arrow{r}{h}\arrow[swap]{d}{p} & FQ\arrow{d}{g}\\
V\times V\arrow[swap]{r}{\pi} & C
\end{tikzcd}
\]
cartesian. Let $i: V\to P$ be given by $i(v)=(\Delta (v), 0)$. This gives a factorization $\Delta_V=pi$ and we claim that it has the desired properties. As a right adjoint the functor $U$ preserves fibre products. Therefore, the square
\[
\begin{tikzcd}
UP\arrow{r}{Uh}\arrow[swap]{d}{Up} & UFQ\arrow{d}{Ug}\\
U(V\times V)\arrow[swap]{r}{U\pi} & UC
\end{tikzcd}
\]
is cartesian. Since the class of epimorphisms in an abelian category is stable under pullbacks and since $Ug$ is an epimorphism it follows that so is $Up$.\\

Note that $\Delta_V$ is a split monomorphism with splitting the first projection. Therefore, the image of the sequence $0\longrightarrow V\stackrel{\Delta_V}{\longrightarrow}V\times V\stackrel{\pi}{\longrightarrow} C\longrightarrow 0$ under $U$ remains exact. In particular, $U\pi$ is surjective and so is its pullback $Uh$. Since fiber products in abelian categories preserve kernels the sequence
$$
0\longrightarrow UV\stackrel{Ui}{\longrightarrow} UP\stackrel{Uh}{\longrightarrow} UFQ\longrightarrow 0
$$
is exact. Thus, $\coker (Ui)\cong UFQ\cong Q$ is trivially cofibrant. This shows that $Ui$ is a trivial cofibration and hence a weak equivalence, as required.\\

We have now shown that the right transfer exists. To get that the adjunction is a Quillen equivalence we simply apply Lemma \ref{unit_equiv} and Lemma \ref{unit_proj}. Recall that the cofibrant objects in $\Ch(H)$ are the acyclic complexes which are termwise projective (cf.\ Theorem \ref{Becker} (i)). The final statement follows from Theorem \ref{Becker} (i) and \cite{HovBook}, Proposition 1.3.13.
\end{proof}

The model structure on $\Ch(G)$ constructed in Proposition \ref{SP_right_transfer} will be referred to as the {\it $I$-singular projective model structure}.

\begin{cor}\label{ISP_stable}
The $I$-singular projective model structure on $\Ch(G)$ is stable. More precisely, the adjunction $[-1]:\Ch(G)\rightleftarrows\Ch(G):[1]$ is a Quillen equivalence whose derived adjunction is given by the loop and suspension functors. 
\end{cor}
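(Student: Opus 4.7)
The plan is to exploit the Quillen equivalence $F:\Ch(H)\rightleftarrows\Ch(G):U$ from Proposition \ref{SP_right_transfer} together with the analogous result for $\Ch(H)$ established in Lemma \ref{explicit_loop_susp}. The crucial observation is that both $F=\XX\otimes_H(\cdot)$ and $U=(\cdot)^I$ are applied termwise to chain complexes and therefore commute with the shift functors: $F\circ[n]=[n]\circ F$ and $U\circ[n]=[n]\circ U$ as functors on complexes, for every $n\in\Z$.

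First I would verify that $[-1]\dashv[1]$ is a Quillen adjunction on $\Ch(G)$. By the definition of the right transfer, a morphism $f$ in $\Ch(G)$ is a fibration (resp.\ a weak equivalence) precisely when $Uf$ is so in $\Ch(H)$. Using $U\circ[1]=[1]\circ U$ together with the fact that $[1]$ on $\Ch(H)$ preserves fibrations and weak equivalences (Lemma \ref{explicit_loop_susp}), one immediately concludes that $[1]$ on $\Ch(G)$ preserves fibrations and weak equivalences, and is therefore right Quillen. By the same reasoning $[-1]$ preserves all weak equivalences, being the inverse of $[1]$ as an exact autoequivalence of the underlying abelian category $\Ch(G)$. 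Hence $[1]$ and $[-1]$ descend to mutually inverse autoequivalences of $\Ho(\Ch(G))$, which is equivalent to saying that we have a Quillen equivalence.

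The final step, and the more delicate one, is to identify $L[-1]_G\cong\Sigma_G$ and $R[1]_G\cong\Omega_G$. Since $F$ is applied termwise, we have the equality $F\circ[-1]=[-1]\circ F$ of left Quillen functors; taking left derived functors and using that the left derived functor of a composite of left Quillen functors is the composite of the left derived functors yields a natural isomorphism $LF\circ L[-1]_H\cong L[-1]_G\circ LF$. Combined with $L[-1]_H\cong\Sigma_H$ from Lemma \ref{explicit_loop_susp} and the general fact that any left Quillen functor intertwines suspensions, namely $LF\circ\Sigma_H\cong\Sigma_G\circ LF$, we obtain an isomorphism $L[-1]_G\circ LF\cong\Sigma_G\circ LF$. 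Since $LF$ is an equivalence by Proposition \ref{SP_right_transfer}, this gives $L[-1]_G\cong\Sigma_G$. The dual argument using $U\circ[1]=[1]\circ U$ gives $R[1]_G\cong\Omega_G$.

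The main obstacle is the justification of the derived functor commutation $LF\circ L[-1]_H\cong L[-1]_G\circ LF$; strictly speaking one needs to know that composing cofibrant replacements in $\Ch(H)$ with the application of a left Quillen functor yields a valid cofibrant replacement for the composite, which is standard but should be spelled out. Once this is in place, stability of the $I$-singular projective model structure on $\Ch(G)$ is immediate: $\Sigma_G$ and $\Omega_G$ are conjugate via the equivalence $LF$ to the equivalences $\Sigma_H$ and $\Omega_H$ on $\Ho(\Ch(H))$, hence are themselves equivalences.
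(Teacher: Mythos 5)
Your proposal is correct and follows essentially the same route as the paper: both rest on the fact that $F$ and $U$ commute with the shift functors, on Lemma \ref{explicit_loop_susp}, on the composition theorem for derived functors of Quillen adjunctions (\cite{HovBook}, Theorem 1.3.7), and on the general intertwining of $\Sigma$ (resp.\ $\Omega$) by left (resp.\ right) derived Quillen functors from \cite{BR}. The only cosmetic difference is that the paper runs the chain of identifications on the loop/right-derived side, $RU\Omega\cong\Omega RU\cong R[1]RU\cong RUR[1]$, and then deduces $L[-1]\cong\Sigma$ by uniqueness of adjoints, whereas you argue on the suspension/left-derived side and dualize.
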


\begin{proof}
The statement about the Quillen equivalence follows formally from Proposition \ref{SP_right_transfer} and Lemma \ref{explicit_loop_susp} because $U$ and $F$ commute with the shift functors. Note that $RU$ commutes with the loop functor by \cite{BR}, Corollary 3.1.4. Therefore, Lemma \ref{explicit_loop_susp} and the right derived version of \cite{HovBook}, Theorem 1.3.7, imply
\[
RU\Omega\cong\Omega RU\cong R[1]RU\cong R([1]U)\cong R(U[1])\cong RUR[1].
\]
Composing with $LF$ gives the isomorphism $\Omega\cong R[1]$ on $\Ch(G)$. The uniqueness of adjoints then also gives $L[-1]\cong\Sigma$. 
\end{proof} 

As in Lemma \ref{I_cofibrations} we are in a situation where the right adjoint $U$ preserves cofibrant objects. 

\begin{lem}\label{ISP_cofibrant}
Endow $\Ch(G)$ with the $I$-singular projective model structure. For an object $V_\bullet\in\Ch(G)$ the following are equivalent.
\begin{itemize}
\item[(i)]$V_\bullet$ is cofibrant
\item[(ii)]$V_\bullet$ is an $I$-exact sequence of $I$-projectives, i.e.\ $UV_\bullet=V_\bullet^I$ is an acyclic complex of projective $H$-modules.
\end{itemize}
If these conditions are satisfied then the counit $\varepsilon_{V_\bullet}:FUV_\bullet\to V_\bullet$ is an isomorphism in $\Ch(G)$.
\end{lem}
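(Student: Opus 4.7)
The plan is to imitate the argument of Lemma \ref{I_cofibrations}, replacing the projective model structure on $\Ch(H)$ by the singular projective one and using that cofibrant objects there are precisely the acyclic complexes of projective $H$-modules (Theorem \ref{Becker}(i)). Both implications will be driven by the fact that on a termwise projective complex $Q\in\Ch(H)$ the unit $\eta_Q:Q\to UFQ$ is an isomorphism (apply Lemma \ref{unit_proj} in each degree).

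For $(i)\Rightarrow(ii)$, I would take a cofibrant replacement $q:Q\to UV_\bullet$ in $\Ch(H)^{\SP}$, so that $Q$ is an acyclic complex of projective $H$-modules and $q$ is a trivial fibration. Let $\hat q = \varepsilon_{V_\bullet}\circ Fq : FQ \to V_\bullet$ denote the adjoint morphism. The key observation is that the triangle identity $U\hat q\circ\eta_Q = q$ combined with $\eta_Q$ being an isomorphism forces $U\hat q = q\circ\eta_Q^{-1}$ to be a trivial fibration in $\Ch(H)^{\SP}$; by the very definition of the right transfer, $\hat q$ is then a trivial fibration in $\Ch(G)^{\mathrm{ISP}}$. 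Applying the left lifting property of the cofibrant object $V_\bullet$ to $\hat q$ and $\id_{V_\bullet}$ exhibits $V_\bullet$ as a retract of $FQ$. Both conclusions in (ii) then follow from retract-stability: on $I$-invariants $V_\bullet^I$ becomes a retract of $UFQ\cong Q$ and thus is itself cofibrant in $\Ch(H)^{\SP}$; termwise, each $V_n$ is a retract of the $I$-free representation $FQ_n$, hence is $I$-projective by Lemma \ref{summand_I_proj}. The latter in turn gives that the counit $\varepsilon_{V_\bullet}$ is a termwise, hence global, isomorphism in $\Ch(G)$.

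For the converse $(ii)\Rightarrow(i)$, suppose each $V_n$ is $I$-projective and $UV_\bullet$ is acyclic. Lemma \ref{summand_I_proj} gives that $\varepsilon_{V_\bullet}:FUV_\bullet\to V_\bullet$ is a termwise, hence global, isomorphism in $\Ch(G)$. By Lemma \ref{summand_I_proj} the complex $UV_\bullet$ is termwise projective, and it is acyclic by hypothesis, so it is cofibrant in $\Ch(H)^{\SP}$ by Theorem \ref{Becker}(i). Since $F$ is left Quillen for the right-transferred structure (Proposition \ref{SP_right_transfer}), $FUV_\bullet$ is cofibrant in $\Ch(G)^{\mathrm{ISP}}$, and therefore so is $V_\bullet$.

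The only subtlety I foresee is the bookkeeping around the adjunction: one must be careful that a trivial fibration in $\Ch(G)^{\mathrm{ISP}}$ really is detected by $U$ landing in the trivial fibrations of $\Ch(H)^{\SP}$, which is built into the definition of the right transfer but worth making explicit in the write-up. Everything else reduces to systematic use of Lemma \ref{unit_proj}, Lemma \ref{summand_I_proj}, and the fact that both cofibrancy in $\Ch(H)^{\SP}$ and $I$-projectivity (of $G$-representations) are closed under retracts.
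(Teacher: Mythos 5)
Your argument is correct and follows essentially the same route as the paper: the paper's proof simply notes that cofibrant objects of $\Ch(H)$ in the singular projective model structure are termwise projective (so $\eta$ is an isomorphism on them by Lemma \ref{unit_proj}) and then transports the proof of Lemma \ref{I_cofibrations} verbatim, which is exactly the retract-of-$FQ$ argument you spell out. The only detail worth keeping explicit in the write-up is the one you already flag, namely that $U\hat q=q\circ\eta_Q^{-1}$ being a trivial fibration in $\Ch(H)$ detects $\hat q$ as a trivial fibration in $\Ch(G)$ by the definition of the right transfer.
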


\begin{proof}
Any cofibrant object $Z\in\Ch(H)$ is termwise projective. By Lemma \ref{unit_proj} the unit $\eta_Z:Z\to UFZ$ is an isomorphism. With these observations and using the characterization of $I$-projectives in Lemma \ref{summand_I_proj} (iii) the proof of Lemma \ref{I_cofibrations} carries over almost verbatim.
\end{proof}

As a formal consequence we get the following analog of Corollary \ref{fib_cofib_equivalence}.

\begin{cor}\label{SP_fib_cofib}
Endow $\Ch(H)$ and $\Ch(G)$ with the singular projective model structure and the $I$-singular projective model structure, respectively. Then $U$ and $F$ restrict to inverse equivalences of categories $\Ch(G)_c\rightleftarrows\Ch(H)_c$.\hfill$\Box$
\end{cor}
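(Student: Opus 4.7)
The plan is to mimic the proof of Corollary \ref{fib_cofib_equivalence} almost verbatim, since the ingredients needed are all already assembled in the preceding Lemma \ref{ISP_cofibrant}, Proposition \ref{SP_right_transfer}, and the characterization of cofibrant objects in the singular projective model structure from Theorem \ref{Becker}(i).

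First, I would check that the two functors really land in the prescribed subcategories. If $Y \in \Ch(G)$ is cofibrant for the $I$-singular projective model structure, then Lemma \ref{ISP_cofibrant} tells us that $UY$ is an acyclic termwise projective complex of $H$-modules, which is precisely the description of cofibrant objects in $\Ch(H)$ for the singular projective model structure (cofibrations are monomorphisms with acyclic termwise projective cokernel, so cofibrant objects $0 \to X$ are exactly the acyclic termwise projective complexes). Conversely, if $Z \in \Ch(H)$ is cofibrant then $FZ$ is cofibrant in $\Ch(G)$ since $F$ is a left Quillen functor by Proposition \ref{SP_right_transfer}.

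Next I would verify that the unit and counit of the adjunction restrict to natural isomorphisms on these subcategories. For cofibrant $Y \in \Ch(G)$, the counit $\varepsilon_Y : FUY \to Y$ is an isomorphism directly from the last statement of Lemma \ref{ISP_cofibrant}. For cofibrant $Z \in \Ch(H)$, the complex $Z$ is termwise projective, so Lemma \ref{unit_proj} applied degreewise shows that the unit $\eta_Z : Z \to UFZ$ is a termwise isomorphism, hence an isomorphism of complexes.

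These four observations together show that $U$ and $F$ restrict to quasi-inverse equivalences between $\Ch(G)_c$ and $\Ch(H)_c$. There is essentially no obstacle here: the only mild point is recognizing that cofibrant objects in $\Ch(H)^{\mathrm{SP}}$ are exactly the acyclic termwise projective complexes, which is immediate from the explicit description of the cofibrations in Theorem \ref{Becker}(i). The proof could be written as a one-line reference to Lemma \ref{ISP_cofibrant}, Lemma \ref{unit_proj}, and the left Quillen property of $F$.
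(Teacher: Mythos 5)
Your proposal is correct and matches the paper's intent exactly: the corollary is stated with no written proof precisely because it follows formally from Lemma \ref{ISP_cofibrant}, Lemma \ref{unit_proj}, and the left Quillen property of $F$ from Proposition \ref{SP_right_transfer}, in the same way that Corollary \ref{fib_cofib_equivalence} follows from Lemma \ref{I_cofibrations}. All four of your verification steps (both functors preserve cofibrancy, and the unit and counit are isomorphisms on cofibrant objects) are the intended argument.
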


Moreover, we obtain the following description of the cofibrations in $\Ch(G)$.

\begin{cor}\label{ISP_cofibrations} A morphism $i:V_\bullet\to W_\bullet$ in $\Ch(G)$ is a cofibration for the $I$-singular projective model structure if and only if it is a termwise split monomorphism whose cokernel is an $I$-exact sequence of $I$-projectives.
\end{cor}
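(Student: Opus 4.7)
The strategy is to identify the cofibrations with the class $\mathcal{L}$ of termwise split monomorphisms in $\Ch(G)$ whose cokernel is cofibrant in the sense of Lemma \ref{ISP_cofibrant}. Recall from Proposition \ref{SP_right_transfer} and Theorem \ref{right_transfer} that the $I$-singular projective model structure is cofibrantly generated by $F\mathcal{I}$, where $\mathcal{I}$ is the set of generating cofibrations of Becker's model on $\Ch(H)$. By Theorem \ref{Becker}(i), the cofibrations in $\Ch(H)$ are monomorphisms with acyclic termwise projective cokernel; such maps are automatically termwise split since projectivity of the cokernel in each degree makes the short exact sequence split.

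For the forward direction, the additive functor $F$ preserves termwise split monomorphisms and their cokernels, and by Lemma \ref{ISP_cofibrant} combined with $UFZ_\bullet\cong Z_\bullet$ for termwise projective $Z_\bullet$ (Lemma \ref{unit_proj}), it sends cofibrant objects of $\Ch(H)$ to cofibrant objects of $\Ch(G)$. Hence $F\mathcal{I}\subseteq \mathcal{L}$. It then suffices to verify that $\mathcal{L}$ is closed under pushouts, retracts, and transfinite compositions. Pushouts and retracts are immediate because termwise split monomorphisms are stable under both, and cokernels behave well. For transfinite compositions the crucial point is that the class of cofibrant objects is closed under filtered colimits along termwise split monomorphisms: in each degree one forms a filtered colimit of projective $H$-modules along split inclusions (which is again projective), and filtered colimits preserve acyclicity.

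For the reverse direction, let $i\colon V_\bullet\to W_\bullet$ lie in $\mathcal{L}$ with cokernel $C_\bullet$, and verify the left lifting property against an arbitrary trivial fibration $p\colon X_\bullet\to Y_\bullet$ in $\Ch(G)$. Fix graded splittings so that $W_\bullet=i(V_\bullet)\oplus j(C_\bullet)$ as graded objects with differential twisted by a chain map $\phi\colon C_\bullet\to V_\bullet[-1]$. Constructing a lift $\tilde h$ of a given square with top map $u$ and bottom map $v$ reduces to finding a graded map $g\colon C_\bullet\to X_\bullet$ with $pg=v\circ j$ and $d_X g-gd_C=u\phi$. A graded lift $g_0$ of $v\circ j$ exists since $C_\bullet$ is termwise $I$-projective and $Up$ is termwise surjective (the relevant lifting along $I$-projectives reduces via $C_m\cong FUC_m$ and the adjunction to lifting a projective against $Up_m$). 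The defect $\delta=d_Xg_0-g_0d_C-u\phi$ is a degree $-1$ cocycle in $\Hom^\bullet_{\Ch(G)}(C_\bullet,K_\bullet)$ where $K_\bullet=\ker(p)$. Using the counit isomorphism $FUC_\bullet\cong C_\bullet$ from Lemma \ref{ISP_cofibrant} and the adjunction termwise, one identifies this Hom complex with $\Hom^\bullet_{\Ch(H)}(UC_\bullet,UK_\bullet)$. Since $UC_\bullet$ is cofibrant and $UK_\bullet$ is trivially fibrant in Becker's model (as the kernel of the trivial fibration $Up$), hereditarity makes this Hom complex acyclic. Thus $\delta$ is a coboundary, and correcting $g_0$ by the corresponding null-homotopy yields the desired lift.

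The main obstacle is the acyclicity of $\Hom^\bullet_{\Ch(H)}(UC_\bullet,UK_\bullet)$. It relies on hereditarity of Becker's model structure (Corollary 1.1.15 in \cite{Beck14}): since the shift is a Quillen auto-equivalence preserving both cofibrant and trivially fibrant classes, one gets $[UC_\bullet,UK_\bullet[n]]=0$ in the homotopy category for every $n\in\Z$, and since right homotopy of maps with cofibrant source is ordinary chain homotopy in Becker's setup, this vanishing identifies with the acyclicity of the Hom complex.
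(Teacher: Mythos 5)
Your proof is correct, but it follows a genuinely different route from the paper on the forward implication and fills in details that the paper delegates to a reference on the converse. For ``cofibration $\Rightarrow$ termwise split mono with cofibrant cokernel'' the paper does not use the cellular description at all: it lifts $i$ directly against the trivial fibrations $D^{n+1}V_n\to 0$ (the disk complexes on $V_n$, which are trivial by \cite{Beck14}, Proposition 3.1.1) to produce the degreewise splittings, and then quotes the fact that cokernels of cofibrations are cofibrant together with Lemma \ref{ISP_cofibrant}. Your alternative -- showing that the class $\mathcal{L}$ contains $F\mathcal{I}$ and is closed under pushouts, retracts and transfinite compositions -- also works, but it is longer and the transfinite-composition step hides real verifications you only sketch: that $U=(\cdot)^I$ commutes with the relevant well-ordered colimits of monomorphisms in $\Rep^\infty_k(G)$, and that a transfinite composition of split monomorphisms with projective cokernels has projective colimit (which needs compatibly chosen complements). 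The paper's disk-complex trick buys you the splitting with no colimit analysis. For the converse the paper simply says that the argument of \cite{HovBook}, Proposition 2.3.9, carries over mutatis mutandis; your obstruction-theoretic lift (graded splitting, degreewise lift $g_0$ via $I$-projectivity, defect $\delta$ a degree $-1$ cocycle in $\Hom^\bullet(C_\bullet,K_\bullet)\cong\Hom^\bullet(UC_\bullet,UK_\bullet)$, killed because $UK_\bullet$ is trivial) is exactly that argument made explicit, and it is sound. One small remark: the acyclicity of the Hom complex does not really need hereditarity; it only needs that the shift preserves the class of cofibrant objects (clear, since acyclic termwise projective complexes are shift-invariant), that trivial objects are zero in the homotopy category, and that the homotopy relation out of a cofibrant object in Becker's structure is ordinary chain homotopy -- all of which you essentially use anyway.
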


\begin{proof}
Assume that $i$ is a cofibration. For any $n\in\mathbb{Z}$ consider the complex $D^{n+1}V_n=[0\to V_n\overset{\id}{\to}V_n\to 0]$ concentrated in degrees $n+1$ and $n$. Then $D^{n+1}V_n\to 0$ is an $I$-epimorphism and hence is a fibration. In fact, this is a trivial fibration because applying $U$ gives the bounded acyclic complex $[0\to UV_n\overset{\id}{\to}UV_n\to 0]$ which is trivial in the singular projective model structure by \cite{Beck14}, Proposition 3.1.1. Consider the map $V_\bullet\to D^{n+1}V_n$ given by the differential of $V_\bullet$ in degree $n+1$ and the identity of $V_n$ in degree $n$. By the left lifting property of $i$ applied to the diagram
\[
\xymatrix{
V_\bullet\ar[r]\ar[d]_i & D^{n+1}V_n\ar[d] \\
W_\bullet\ar[r] & 0
}
\]
there is a map $j:W_\bullet\to D^{n+1}V_n$ such that $j_n$ is a splitting of $i_n$. Since the cokernel of a cofibration is cofibrant (cf.\ \cite{HovBook}, Corollary 1.1.11) it follows together with Lemma \ref{ISP_cofibrant} that $i$ is a termwise split monomorphism whose cokernel is an $I$-exact sequence of $I$-projectives.\\

For the converse the arguments given in \cite{HovBook}, Proposition 2.3.9, carry over mutatis mutandis.
\end{proof}

We now pass back from complexes to the categories $\Mod(H)$ and $\Rep^\infty_k(G)$. For the rest of this section we will therefore assume that $H$ is a Gorenstein ring and that $\Mod(H)$ carries the Gorenstein projective model structure from Theorem \ref{GP_R_Mod}.

\begin{prop}\label{GP_right_transfer}
The right transfer of the Gorenstein projective model structure exists along the adjunction $F:\Mod(H)\rightleftarrows\Rep^\infty_k(G):U$.
\end{prop}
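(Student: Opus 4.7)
My plan is to apply Theorem~\ref{right_transfer} in combination with Proposition~\ref{path_objects}, closely mimicking the construction used in the proof of Proposition~\ref{SP_right_transfer}. The hypotheses of Theorem~\ref{right_transfer} are quickly verified: $\Mod(H)$ endowed with the Gorenstein projective model structure is cofibrantly generated by Theorem~\ref{GP_R_Mod}, and $\Rep^\infty_k(G)$ is a Grothendieck category by \cite{Sch}, Lemma~1, so it is bicomplete and all of its objects are small. Moreover, since every epimorphism in $\Mod(H)$ is a fibration in the Gorenstein projective model structure, every object of $\Mod(H)$ is fibrant. This places us in the situation of Proposition~\ref{path_objects}, so it suffices to construct a path object in $\Rep^\infty_k(G)$ for every representation $V$, where fibrations and weak equivalences are understood in the candidate sense (i.e.\ the classes whose image under $U$ has the corresponding property in $\Mod(H)$).

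To build such a path object, I would first form $C=\coker(\Delta_V)$ with the canonical projection $\pi:V\times V\to C$, and choose an epimorphism $f:Q\to UC$ in $\Mod(H)$ where $Q$ is a projective $H$-module. Recall that the trivially cofibrant objects of the Gorenstein projective model structure are precisely the projective modules (cf.\ the discussion following Theorem~\ref{GP_R_Mod}). Letting $g:FQ\to C$ be the adjoint of $f$, the identity $f=Ug\circ\eta_Q$ combined with Lemma~\ref{unit_proj} shows that $Ug$ is surjective. I then form the pullback
\[
\begin{tikzcd}
P\arrow{r}{h}\arrow[swap]{d}{p} & FQ\arrow{d}{g}\\
V\times V\arrow[swap]{r}{\pi} & C
\end{tikzcd}
\]
in $\Rep^\infty_k(G)$ and define $i:V\to P$ by $v\mapsto(\Delta(v),0)$, which gives the required factorization $\Delta_V=pi$.

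It remains to check that $Up$ is an epimorphism and that $Ui$ is a weak equivalence in the Gorenstein projective model structure. Since $U$ is a right adjoint it preserves the pullback, and because $\Delta_V$ is split by the first projection, the short exact sequence $0\to V\to V\times V\to C\to 0$ remains exact after applying $U$. Pulling back the epimorphism $Ug$ then yields a short exact sequence
\[
0\longrightarrow UV\stackrel{Ui}{\longrightarrow}UP\stackrel{Uh}{\longrightarrow}UFQ\longrightarrow 0
\]
in $\Mod(H)$, in which $UFQ\cong Q$ is projective; and $Up$ is an epimorphism as the pullback of the epimorphism $U\pi$. The main obstacle---and the only non-formal point---is concluding that $Ui$ is a weak equivalence. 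This follows from the fact that the Gorenstein projective model structure is abelian in the sense of Hovey, so a monomorphism whose cokernel is trivially cofibrant is automatically a trivial cofibration; since $Q$ is projective and hence trivially cofibrant, $Ui$ is indeed such. Together with Proposition~\ref{path_objects} and Theorem~\ref{right_transfer} this yields the existence of the right transfer.
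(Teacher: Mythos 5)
Your proposal is correct, and the overall strategy is the same as the paper's: verify the hypotheses of Theorem~\ref{right_transfer}, reduce to Proposition~\ref{path_objects} via the observation that all objects of $\Mod(H)$ are fibrant, and then construct a path object for each $V\in\Rep^\infty_k(G)$. Where you differ is in the construction of the path object itself. You transplant the pullback construction from the proof of Proposition~\ref{SP_right_transfer}: form $C=\coker(\Delta_V)$, cover $UC$ by a projective $Q$, and pull back $g:FQ\to C$ along $\pi$. The paper instead follows \cite{Hov2}, Proposition 9.1, directly: choose an $I$-epimorphism $q:Y\to W$ with $Y$ $I$-projective and factor the diagonal as $W\to W\times Y\to W\times W$ via $i(w)=(w,0)$ and $p(w,y)=(w,w+q(y))$, so that $Ui$ is a split injection with projective cokernel $UY$ and $Up$ is surjective. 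The paper's construction is shorter and avoids any appeal to the preservation of pullbacks, while yours has the virtue of making the module and chain-complex cases formally uniform --- indeed the paper remarks immediately after this proposition that the two constructions are interchangeable, just in the opposite direction. All the individual steps in your argument check out: $Up$ is epi as a pullback of the epi $Ug$ (or of $U\pi$), the sequence $0\to UV\to UP\to UFQ\to 0$ is exact because $U$ preserves pullbacks and the split exactness of $0\to V\to V\times V\to C\to 0$, and $Ui$ is a trivial cofibration because the Gorenstein projective model structure is abelian and its trivially cofibrant objects are exactly the projective modules.
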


\begin{proof}
Since $\Mod(H)$ is cofibrantly generated this will follow from Theorem \ref{right_transfer} and Proposition \ref{path_objects} if we can show that path objects exist in $\Rep^\infty_k(G)$. To construct them we follow the argument in the proof of \cite{Hov2}, Proposition 9.1. Given $W\in\Rep^\infty_k(G)$ we choose an $I$-epimorphism $q:Y\to W$ where $Y$ is $I$-projective. Then the maps
\[
W\stackrel{i}{\longrightarrow}W\times Y\stackrel{p}{\longrightarrow}W\times W
\]
given by $i(w)=(w,0)$ and $p(w,y)=(w,w+q(y))$ give a factorization $\Delta_W=p i$ of the diagonal $\Delta_W:W\to W\times W$. Moreover, $Ui$ is an injection such that $\coker(Ui)\cong UY$ is a projective $H$-module. Thus, $Ui$ is a trivial cofibration in $\Mod(H)$. This implies that $i$ is a weak equivalence. Moreover, the map $Up$ is surjective because the map $Uq$ is. Thus, $Up$ is a fibration in $\Mod(H)$. This implies that $p$ is a fibration.
\end{proof}

Using Lemma \ref{enough_I_projectives} one can follow the above arguments to give a second proof of the existence of path objects in the situation of Proposition \ref{SP_right_transfer}. \\

The model structure on $\Rep^\infty_k(G)$ constructed in Proposition \ref{GP_right_transfer} will be called the {\it $I$-Gorenstein projective} model structure. This is the model structure on $\Rep^\infty_k(G)$ we will consider for the rest of this section. To describe it more concretely call an object $V\in\Rep^\infty_k(G)$ {\it $I$-trivial} if the object $UV\in\Mod(H)$ is trivial, i.e.\ has finite projective dimension. With this terminology the fibrations in $\Rep^\infty_k(G)$ are the $I$-epimorphisms and the trivial fibrations are the $I$-epimorphisms with $I$-trivial kernel.\\

We now give a different characterization of the $I$-trivial objects. Recall from the discussion following Corollary \ref{proj_equiv} that every $V\in \Rep^\infty_k(G)$ admits an $I$-resolution $X_\bullet\to V\to 0$. By Lemma \ref{pullback} and Lemma \ref{I_exact} this is a complex $X_\bullet$ of $I$-projective $G$-representations such that $UX_\bullet\to UV\to 0$ is exact.

\begin{defin}\label{I_projdim} We say that a representation $V\in \Rep^\infty_k(G)$ has {\it finite $I$-projective dimension} if it admits an $I$-resolution of finite length, i.e.\ if there exists an $I$-exact sequence
$0\to X_n\to X_{n-1}\to\cdots\to X_0\to V\to 0$
where $X_i$ is $I$-projective for any $i$.
\end{defin}

\begin{lem}\label{cond_trivial} A representation $V\in \Rep^\infty_k(G)$ is $I$-trivial if and only if it has finite $I$-projective dimension.
\end{lem}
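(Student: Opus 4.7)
The plan is to prove the two implications separately. For the direction finite $I$-projective dimension $\Rightarrow$ $I$-trivial, suppose $V$ admits a finite $I$-resolution $0\to X_n\to\cdots\to X_0\to V\to 0$. By Lemma \ref{pullback} the complex $0\to UX_n\to\cdots\to UX_0\to UV\to 0$ is exact in $\Mod(H)$, and each $UX_i$ is projective by Lemma \ref{summand_I_proj}. Hence $\mathrm{pd}_H(UV)\le n$, so $V$ is $I$-trivial.

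For the converse, assume $\mathrm{pd}_H(UV)=n<\infty$. By the construction following Corollary \ref{proj_equiv} we can choose an $I$-resolution $\cdots\to X_1\to X_0\to V\to 0$ with each $X_i$ even $I$-free. Setting $K_0=V$ and $K_{i+1}=\ker(X_i\to K_i)$, each short sequence $0\to K_{i+1}\to X_i\to K_i\to 0$ is $I$-exact. Since the $X_i$ are $I$-projective, the associated long exact sequences for relative Ext combine into the dimension-shifting isomorphism
\[
\Ext^1_{G,I}(K_n,W)\;\cong\;\Ext^{n+1}_{G,I}(V,W)
\]
for every $W\in\Rep^\infty_k(G)$. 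By Corollary \ref{Ext_comparison} the right-hand side identifies with $\Ext^{n+1}_H(UV,UW)$, and this vanishes because $\mathrm{pd}_H(UV)\le n$.

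It remains to upgrade the vanishing $\Ext^1_{G,I}(K_n,-)=0$ into the $I$-projectivity of $K_n$, so that the truncated sequence $0\to K_n\to X_{n-1}\to\cdots\to X_0\to V\to 0$ is the desired finite $I$-resolution. For this, pick any $I$-epimorphism $q:Y\to K_n$ with $Y$ an $I$-projective representation (which exists by the discussion after Corollary \ref{proj_equiv}) and let $K'=\ker(q)$, so that $0\to K'\to Y\to K_n\to 0$ is $I$-exact. The long exact sequence yields a surjection $\Hom_G(K_n,Y)\to\Hom_G(K_n,K_n)$, so $\id_{K_n}$ lifts and $q$ splits. Thus $K_n$ is a retract of the $I$-projective representation $Y$, hence itself $I$-projective by Lemma \ref{summand_I_proj}. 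The conceptual obstacle is precisely the passage from the abstract homological vanishing at finite stage to the existence of an actual finite syzygy realised inside $\Rep^\infty_k(G)$; the bridge is the relative Ext comparison of Corollary \ref{Ext_comparison}, which converts projective dimension information about $UV$ on the $H$-side into a statement about $I$-syzygies on the $G$-side.
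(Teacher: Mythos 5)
Your first implication and the dimension-shifting setup of the converse are unobjectionable, but the final step of the converse has a genuine gap. The long exact sequence you invoke for the short $I$-exact sequence $0\to K'\to Y\to K_n\to 0$ does not produce a surjection onto $\Hom_G(K_n,K_n)$. The degree-zero term of the relative Ext sequence is $\Ext^0_{G,I}(K_n,\cdot)$, and by Corollary \ref{Ext_comparison} (applied with $n=0$) this is $\Hom_H(UK_n,U(\cdot))$, not $\Hom_G(K_n,\cdot)$: the augmentation $P_0\to K_n$ of an $I$-resolution is only an $I$-epimorphism, not an epimorphism, so $H^0\Hom_G(P_\bullet,\cdot)$ does not compute $\Hom_G(K_n,\cdot)$ (cf.\ Remark \ref{relative_ext_ambiguous}). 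What your argument actually yields is a splitting of $Uq:UY\to UK_n$ in $\Mod(H)$, i.e.\ that $UK_n$ is a projective $H$-module --- which was already known at that point --- and this cannot be upgraded to a splitting of $q$ itself. Indeed, $K_n=\ker(d_{n-1})$ is genuinely the wrong syzygy: by Lemma \ref{summand_I_proj}, $I$-projectivity of $K_n$ requires, besides projectivity of $UK_n$, that $K_n$ be generated by its $I$-invariants, and a kernel formed in $\Rep^\infty_k(G)$ need not have this property; $\im(d_n)$ may be a proper subrepresentation of $\ker(d_{n-1})$ even though both have the same $I$-invariants (cf.\ Lemma \ref{I_exact} and Remark \ref{warning_exactness}).

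The repair is to truncate with $\im(d_n)$ rather than with $\ker(d_{n-1})$: it is a quotient of the $I$-projective object $X_n$, hence generated by its $I$-invariants, and $U\im(d_n)=\ker(Ud_{n-1})$ is projective by the standard syzygy lemma because $UX_\bullet$ is a projective resolution of the $H$-module $UV$ of finite projective dimension $n$. Lemma \ref{summand_I_proj} then gives $I$-projectivity of $\im(d_n)$ directly, and the truncated sequence is the desired finite $I$-resolution. This is precisely the paper's argument; note that it bypasses the relative Ext machinery entirely, since your dimension shifting merely re-derives on the $G$-side the projectivity of the $n$-th syzygy of $UV$, which is immediately available on the $H$-side.
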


\begin{proof}
Assume that $V$ has finite $I$-projective dimension. Applying $U$ to a finite $I$-resolution of $V$ we obtain a finite projective resolution of $UV$. Thus, $V$ is $I$-trivial. Conversely, suppose that $V$ is $I$-trivial and choose any $I$-resolution $X_\bullet=(X_\bullet,d_\bullet)$ of $V$. Let $n$ denote the projective dimension of $UV$. Since $UX_\bullet$ is a projective resolution of $UV$ the $H$-module $\ker(Ud_{n-1})$ is projective by \cite{Sta21}, 00O5. By Lemma \ref{I_exact} we have $\ker(Ud_{n-1})=U\im(d_n)$. Moreover, $\im (d_n)$ is generated by its $I$-invariants because it is a quotient of the $I$-projective object $X_n$ (cf.\ Lemma \ref{summand_I_proj}). Applying Lemma \ref{summand_I_proj} again we see that $\im(d_n)$ is $I$-projective and that
\[
0\to \im (d_n) \to X_{n-1}\to\ldots\to X_0\to V\to 0 
\]
is a finite $I$-resolution of $V$.
\end{proof}

In a next step we characterize the trivial cofibrations in $\Rep^\infty_k(G)$. 

\begin{lem}\label{triv_cof_split} A morphism $i:V\to W$ in $\Rep^\infty_k(G)$ is a trivial cofibration if and only if it is a split injection with an $I$-projective cokernel.
\end{lem}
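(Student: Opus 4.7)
The converse direction is the more straightforward one. Given a split injection $i:V\to W$ with $I$-projective cokernel $C$, I would write $W\cong V\oplus C$ and observe that $Ui:UV\to UV\oplus UC$ is a split monomorphism whose cokernel $UC$ is projective by Lemma \ref{summand_I_proj}(iii). This makes $Ui$ a trivial cofibration in the Gorenstein projective model structure on $\Mod(H)$, hence a weak equivalence, so $i$ is a weak equivalence for the right transfer. For the left lifting property of $i$ against a trivial fibration $f:A\to B$, I would decompose $W=V\oplus C$ and reduce the problem to lifting the restriction $k|_C:C\to B$ along $f$; since $C$ is a direct summand of some $I$-free $FP$ with $P$ projective, the $F$-$U$ adjunction combined with the projectivity of $P$ and the surjectivity of $Uf$ produces the required lift.

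For the forward direction, suppose $i:V\to W$ is a trivial cofibration. I would first note that every object of $\Rep^\infty_k(G)$ is fibrant, since the terminal map is always an $I$-epimorphism. Hence $V\to *$ is a fibration, and applying the left lifting property of the trivial cofibration $i$ to the tautological square over this fibration produces a retraction $u:W\to V$ with $ui=\id_V$. Writing $W\cong V\oplus C$ for $C=\coker(i)$, the retraction $Uu:UW\to UV$ is an epimorphism which, by the two-out-of-three property, is also a weak equivalence in $\Mod(H)$, hence a trivial fibration in the Gorenstein projective model structure. Its kernel $UC$ therefore has finite projective dimension.

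The heart of the argument, and the main obstacle, is to strengthen this to the $I$-projectivity of $C$. A naive attempt to verify the two conditions of Lemma \ref{summand_I_proj}(iii) separately is awkward because finite projective dimension of $UC$ does not immediately yield projectivity. Instead, I would realise $C$ directly as a summand of an $I$-free object, i.e.\ verify condition (ii) of the same lemma. Choose a surjection $p:P\twoheadrightarrow UC$ from a free $H$-module $P$. Its adjoint $q:FP\to C$ has $Uq$ identified with $p$ via the unit isomorphism $\eta_P:P\to UFP$, and $\ker(Uq)\cong\ker(p)$ has finite projective dimension because $UC$ does. Hence $q$ is a trivial fibration in the right transfer.

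To conclude, I would test the cofibration $i$ against the trivial fibration $\tilde q=\id_V\oplus q:V\oplus FP\to V\oplus C=W$. The obvious square, with top arrow $j:V\to V\oplus FP$, $v\mapsto(v,0)$, bottom arrow $\id_W$, left arrow $i$ and right arrow $\tilde q$, commutes, so the left lifting property yields a diagonal $h:W\to V\oplus FP$. Unpacking the two compatibility conditions $\tilde qh=\id_W$ and $hi=j$ forces the $FP$-component of $h$ to depend only on the $C$-coordinate of its argument and to define a section $s:C\to FP$ of $q$. Thus $C$ is a direct summand of the $I$-free object $FP$, hence $I$-projective by Lemma \ref{summand_I_proj}(ii). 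The point of this last step is that the particular choice of test fibration $\tilde q$ makes the extraction of a splitting of $q$ automatic; without such a careful setup one would only recover the already-known fact that $UC$ has finite projective dimension.
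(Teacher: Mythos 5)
Your proof is correct, and the ``if'' direction is essentially the paper's argument (split $W=V\oplus C$ and lift the $C$-component using $I$-projectivity); the only cosmetic difference is that you check ``weak equivalence'' and ``LLP against trivial fibrations'' separately, whereas the paper lifts against all fibrations at once. The ``only if'' direction, however, takes a genuinely different route. The paper obtains the splitting of $i$ by factoring it as $V\to V\oplus W\xrightarrow{p_2}W$ and lifting against the $I$-epimorphism $p_2$, and it obtains $I$-projectivity of $C$ abstractly: $C=\coker(i)$ is trivially cofibrant as a pushout of $i$, and trivially cofibrant means precisely that $\Hom_G(C,\cdot)$ sends fibrations to surjections, which is the defining property of membership in the $I$-projective class (Definition \ref{proj_class}~(ii) via Lemma \ref{pullback}). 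You instead split $i$ by lifting against the fibration $V\to 0$ (using that all objects are fibrant), and you get $I$-projectivity of $C$ concretely, by manufacturing a test map $\tilde q=\id_V\oplus q:V\oplus FP\to W$ and reading off a section $C\to FP$ from the diagonal filler, so that $C$ is a retract of an $I$-free object (Lemma \ref{summand_I_proj}). Your route is longer but more explicit, and it is worth noting that the intermediate step showing $UC$ has finite projective dimension is dispensable: since $i$ is a \emph{trivial} cofibration it lifts against all fibrations, so $\tilde q$ only needs to be an $I$-epimorphism, not a trivial fibration. The paper's route is shorter but leans on the model-categorical fact that cofibrant objects are stable under pushout and on the projective-class formalism; yours exposes the retract explicitly and would still work if one only knew that $i$ lifts against the single map $\tilde q$.
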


\begin{proof}
If $i$ is a split injection with an $I$-projective cokernel we may assume $W=V\oplus C$ where $C$ is $I$-projective. If we have a commutative diagram
\[
\begin{tikzcd}
V\arrow{r}{f}\arrow{d}{i} & V'\arrow{d}{p}\\
W= V\oplus C\arrow{r} & W'
\end{tikzcd}
\]
where $p$ is an $I$-epimorphism then the map $C\to W'$ lifts to a map $g:C\to V'$ because $C$ is $I$-projective. The map $(f,g):V\oplus C=W\to V'$ is then a lift of $i$.\\

Conversely, suppose that $i:V\to W$ is a trivial cofibration. Its cokernel $C$ is trivially cofibrant by \cite{HovBook}, Corollary 1.1.11. This means that the functor $\Hom_G(C,\cdot)$ sends $I$-epimorphisms to surjections which is equivalent to $C$ being $I$-projective. It remains to show that $i$ is a split injection. Factorize $i$ as $V\stackrel{\alpha}{\to}V\oplus W\stackrel{p_2}{\to} W$ where $\alpha(v)=(v, i(v))$ and $p_2$ is the projection onto the second summand. Since $p_2$ is an $I$-epimorphism and since $i$ is a trivial cofibration we obtain a lift $h$ in the commutative diagram
\[
\xymatrix@R=0.75cm{
V\ar[r]^{\alpha}\ar[d]^{i} & V\oplus W \ar[d]^{p_2}\\
W\ar@{=}[r]\ar@{.>}[ur]^h & W.
}
\]
The first projection $p_1:V\oplus W\to V$ then gives the splitting $p_1 h$ of $i$.
\end{proof}

The same argument shows that if $V\in\Rep^\infty_k(G)$ is $I$-trivial then any cofibration $i:V\to W$ is a split injection with a cofibrant cokernel. In general, however, cofibrations will not be monomorphisms. Indeed, the adjunction in Proposition \ref{GP_right_transfer} is Quillen by Lemma \ref{unit_equiv}. Therefore, the functor $F$ preserves cofibrations, i.e.\ the image under $F$ of any monomorphism with a Gorenstein projective cokernel is a cofibration in $\Rep^\infty_k(G)$. By the lack of exactness of $F$ this will not be a monomorphism in general. However, we have the following result.

\begin{lem}\label{cofib_I_inv} If $i:V\to W$ is a cofibration then so is the inclusion $\im(i)\hookrightarrow W$ and we have $W=\im(i)+k[G]\cdot UW$. In particular, any cofibrant object is generated by its $I$-invariants.
\end{lem}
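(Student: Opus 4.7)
The plan is to prove both assertions by cleverly exploiting the left lifting property of the cofibration $i$ against two different trivial fibrations, and to deduce the last sentence as the special case $V = 0$. Recall that in the $I$-Gorenstein projective model structure the trivial fibrations are precisely the $I$-epimorphisms with $I$-trivial kernel, so a morphism is a trivial fibration as soon as $U$ applied to it is a surjection and its kernel has finite projective dimension (e.g.\ is zero).

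For the first assertion, let $q: V\twoheadrightarrow\im(i)$ be the induced surjection and $j':\im(i)\hookrightarrow W$ the inclusion, so that $i = j'q$. Given any commutative square with $j'$ on the left and a trivial fibration $p$ on the right, I would precompose with $q$ to produce a square with $i$ on the left, apply the lifting property of $i$ to get a map $h: W\to V'$ with $hi = (\text{top arrow})\circ q$ and $ph = (\text{bottom arrow})$, and then use the surjectivity of $q$ to conclude $hj' = (\text{top arrow})$. Thus $j'$ itself satisfies the left lifting property against all trivial fibrations and is a cofibration.

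For the second assertion, set $W' := \im(i) + k[G]\cdot UW \subseteq W$ and let $j: W'\hookrightarrow W$ be the inclusion. The key observation is that $W^I = UW\subseteq k[G]\cdot UW\subseteq W'$, so $(W')^I = W^I$ and $Uj$ is the identity (hence surjective); since $\ker j = 0$ is automatically $I$-trivial, $j$ is a trivial fibration. Because $\im(i)\subseteq W'$, the morphism $i$ factors as $i = j\circ i'$ for a unique $i':V\to W'$, giving the commutative square
\[
\begin{tikzcd}
V\arrow{r}{i'}\arrow{d}{i} & W'\arrow{d}{j}\\
W\arrow[equal]{r} & W
\end{tikzcd}
\]
to which the left lifting property of $i$ applies, producing $h: W\to W'$ with $jh = \id_W$. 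Since $j$ is a monomorphism that is also a split epimorphism, it is an isomorphism, i.e.\ $W' = W$. The final statement follows by specializing to $i:0\to W$ with $W$ cofibrant, which gives $\im(i) = 0$ and hence $W = k[G]\cdot UW$.

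The main obstacle, and the only non-formal point, is recognizing that the inclusion $W'\hookrightarrow W$ of the subrepresentation generated by $\im(i)$ together with the image of the counit is a trivial fibration: this hinges entirely on the inclusion $UW\subseteq k[G]\cdot UW$, which forces equality of $I$-invariants. Everything else is a standard manipulation of the lifting property.
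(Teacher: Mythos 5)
Your proof is correct and follows essentially the same route as the paper: the key step for the second assertion (the inclusion $W'=\im(i)+k[G]\cdot UW\hookrightarrow W$ is a trivial fibration because it is an isomorphism on $I$-invariants, then lift $i$ against it) and the specialization to $V=0$ are exactly the paper's argument. The only cosmetic difference is in the first assertion, where the paper observes that the square with $i$ on the top and the left and $\im(i)\hookrightarrow W$ on the right is a pushout and invokes closure of cofibrations under pushouts, whereas you verify the left lifting property of $\im(i)\hookrightarrow W$ directly using the surjectivity of $V\to\im(i)$; both are valid.
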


\begin{proof}
For the first part note that
\[
\xymatrix@R=0.75cm{
V\ar[r]^{i}\ar[d]^{i} & \im(i) \ar@{^{(}->}[d]\\
W\ar@{=}[r] & W
}
\]
is a pushout square and that the class of cofibrations is closed under pushouts (cf.\ \cite{HovBook}, Corollary 1.1.11). For the second part let $W'=\im(i)+k[G]\cdot UW$. The inclusion $W'\hookrightarrow W$ is an isomorphism on $I$-invariants, hence is a trivial fibration. Thus, we may apply the lifting property of $i$ to the diagram
\[
\xymatrix@R=0.75cm{
V\ar[r]^{i}\ar[d]^{i} & W'\ar[d]\\
W\ar@{=}[r]\ar@{.>}[ur] & W
}
\]
and get $W=W'$. The last statement follows from taking $V=0$.
\end{proof}

In order to characterize the cofibrant objects we first need to define a certain class of short exact sequences in $\Rep^\infty_k(G)$.

\begin{defin}\label{good_seq} Let $\mathcal{P}$ be the class of sequences $0\to V\to W\to X\to 0$ in $\Rep^\infty_k(G)$ which are both exact and $I$-exact. 
\end{defin}

A morphism $p:W\to X$ in $\Rep^\infty_k(G)$ is called a $\mathcal{P}$-epimorphism if the sequence $0\to \ker(p)\to W\stackrel{p}{\to}X\to 0$ belongs to $\mathcal{P}$. Note that the $\mathcal{P}$-epi\-mor\-phisms are precisely the epimorphisms which are also $I$-epimorphisms. The notion of a $\mathcal{P}$-mono\-mor\-phism is defined dually. In fact, the class $\mathcal{P}$ is proper in the sense of \cite{Mac}, Chapter XII.4.

\begin{lem}\label{good_proper} The class $\mathcal{P}$ is proper.
\end{lem}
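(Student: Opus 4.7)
The plan is to reformulate $\mathcal{P}$ concisely and then verify each of the axioms of a proper class from Mac Lane, Chapter XII.4. Since $U=(\cdot)^I$ is left exact (being a right adjoint), a short exact sequence $0\to V\to W\to X\to 0$ in $\Rep^\infty_k(G)$ is $I$-exact in the sense of Definition \ref{I_proj} precisely when $UW\to UX$ is surjective, i.e.\ when $0\to UV\to UW\to UX\to 0$ is a short exact sequence in $\Mod(H)$. In this reformulation $\mathcal{P}$ is nothing other than the preimage under $U$ of the proper class of all short exact sequences in $\Mod(H)$, and the strategy is to propagate properness of the target class through $U$, using that $U$ is additive and preserves all limits.

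With this in hand most axioms are immediate. Closure under isomorphism and containment of all split short exact sequences follow from additivity of $U$. Closure of $\mathcal{P}$-epimorphisms under composition is clear because $U$ turns them into ordinary surjections in $\Mod(H)$, which are closed under composition. Stability of $\mathcal{P}$-epimorphisms under pullback follows because the pullback of $p:W\to X$ along $f:Y\to X$ remains an epimorphism in $\Rep^\infty_k(G)$, and since $U$ preserves pullbacks the induced map $U(W\times_X Y)\cong UW\times_{UX} UY\to UY$ is a pullback of the surjection $UW\to UX$ in $\Mod(H)$, hence surjective. Stability of $\mathcal{P}$-monomorphisms under pushout follows from a dual but simpler observation: given a $\mathcal{P}$-mono $i:V\hookrightarrow W$ and any $f:V\to Y$, the natural isomorphism of cokernels $W/V\xrightarrow{\cong}(Y\cup_V W)/Y$ identifies the surjection $W^I\twoheadrightarrow (W/V)^I$ with a factor of $(Y\cup_V W)^I\to(W/V)^I$, which is therefore also surjective.

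The most delicate point will be closure of $\mathcal{P}$-monomorphisms under composition. Given composable $\mathcal{P}$-monos $U\hookrightarrow V\hookrightarrow W$, I would argue by direct invariant-lifting: take $\bar w\in(W/U)^I$ and a preimage $w\in W$. The image of $\bar w$ in $W/V$ lies in $(W/V)^I$, so the $\mathcal{P}$-mono hypothesis on $V\hookrightarrow W$ yields $w'\in W^I$ with $v:=w'-w\in V$. For any $i\in I$ one computes $iv-v=w-iw\in U$ since $\bar w\in(W/U)^I$, so $v+U\in(V/U)^I$. The $\mathcal{P}$-mono hypothesis on $U\hookrightarrow V$ then gives $v'\in V^I$ with $v'-v\in U$, and $w'-v'\in W^I$ is the desired lift of $\bar w$. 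The remaining cancellation axioms (if $gf$ is a $\mathcal{P}$-epi with $f$ epi then $g$ is; dually for monos) reduce under $U$ to the corresponding, trivial statements for the class of all short exact sequences in $\Mod(H)$, once one checks that the sequence under consideration is honestly exact in $\Rep^\infty_k(G)$.
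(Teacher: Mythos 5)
Your proposal follows essentially the same route as the paper: reduce every axiom to a statement about $I$-invariants and chase elements. Your reformulation of $\mathcal{P}$ via left exactness of $U$ is exactly the one the paper uses implicitly, your treatment of isomorphism closure, split sequences, and composition/cancellation of $\mathcal{P}$-epimorphisms matches the paper's, and your element-lifting argument for the composite of two $\mathcal{P}$-monomorphisms is correct -- it is the hand-computed version of the snake-lemma chase the paper performs on the diagram of $I$-invariants. The pullback/pushout stability you verify is not among MacLane's axioms (it is a consequence of them), so those two paragraphs are harmless but dispensable.

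The one place where your write-up has a real gap is the cancellation axiom for monomorphisms: if $f$ and $g$ are monos and $gf$ is a $\mathcal{P}$-mono, then $f$ is a $\mathcal{P}$-mono. You dismiss this as ``dual'' to the epi case and claim it reduces trivially under $U$, but the duality breaks precisely because $U$ is only left exact: $f$ being a $\mathcal{P}$-mono is the surjectivity of $(V')^I\to(V'/V)^I$, and $U$ does not commute with the cokernels involved, so nothing follows formally from applying $U$ to $gf$. The statement is still true, but it needs a (short) argument: given $\bar v\in(V'/V)^I$, lift its image in $(V''/V)^I$ to some $w\in(V'')^I$ using the hypothesis on $gf$; then $w-v\in V\subseteq V'$ for any preimage $v\in V'$ of $\bar v$, so $w\in V'\cap(V'')^I=(V')^I$ and $w$ is the required lift. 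Equivalently, this is the half of the paper's snake-lemma argument that uses the injectivity of $(V'')^I/(V')^I\to(V''/V')^I$. With that sentence added your proof is complete.
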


\begin{proof}
Clearly $\mathcal{P}$ is closed under isomorphisms of short exact sequences and contains all split exact sequences. Now let $f:V\to V'$ and $g:V'\to V''$ be a composable pair of morphisms. Assume first that $f$ and $g$ are epimorphisms. We must show that if $gf$ is an $I$-epimorphism then so is $g$ and that if $f$ and $g$ are $I$-epimorphisms then so is $gf$. Both cases follow immediately by applying $U$ to $gf$. Now assume that $f$ and $g$ are monomorphisms. We may use them to identify $V$ and $V'$ with subrepresentations of $V''$. Consider the commutative diagram
\[
\xymatrix@R=0.5cm{
 0\ar[r] & (V')^I\ar[r] \ar[d] & (V'')^I\ar[r] \ar[d] & (V'')^I/(V')^I\ar[r]\ar[d]&0\\
 0\ar[r] & (V'/V)^I \ar[r] & (V''/V)^I\ar[r] & (V''/V')^I&
}
\]
with exact rows. If $f$ and $g$ are $\mathcal{P}$-monomorphisms then the outer vertical maps are surjective. The snake lemma implies that the vertical map in the middle is surjective, too. This means that $gf$ is a $\mathcal{P}$-monomorphism. Finally, if $gf$ is a $\mathcal{P}$-monomorphism then the vertical map in the middle is surjective. Since the right vertical map is always a monomorphism the snake lemma implies that the left vertical map is surjective, too. This means $f$ is a $\mathcal{P}$-monomorphism.
\end{proof}

Using the proper class $\mathcal{P}$ we can characterize the cofibrant objects of $\Rep^\infty_k(G)$ by applying the arguments from \cite{Hov2}, Proposition 4.1.

\begin{lem}\label{cofibrant_ext} For an object $V\in \Rep^\infty_k(G)$ the following are equivalent.
\begin{itemize}
\item[(i)] $V$ is cofibrant in the $I$-Gorenstein projective model structure;
\item[(ii)] $V$ is generated by its $I$-invariants and $\Ext^1_{\mathcal{P}}(V,W)=0$ for all objects $W\in \Rep^\infty_k(G)$ of finite $I$-projective dimension.
\end{itemize}
\end{lem}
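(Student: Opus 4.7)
The plan is to prove each implication directly, making use of a functorial cofibrant replacement and of the closure of the class of cofibrant objects under retracts.

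For (i) $\Rightarrow$ (ii), I will quote Lemma \ref{cofib_I_inv} for the statement that any cofibrant $V$ is generated by its $I$-invariants. For the Ext vanishing, given $W$ of finite $I$-projective dimension (equivalently $I$-trivial, by Lemma \ref{cond_trivial}) and any $\mathcal{P}$-exact sequence $0\to W\to E\stackrel{p}{\to} V\to 0$, I will observe that $p$ is an epimorphism whose $I$-invariants are surjective and whose kernel is $I$-trivial, making $p$ a trivial fibration in the $I$-Gorenstein projective model structure. Cofibrancy of $V$ then lifts $\id_V$ through $p$ and splits the sequence, so $\Ext^1_\mathcal{P}(V,W)=0$.

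For (ii) $\Rightarrow$ (i), I will exhibit $V$ as a retract of a cofibrant object. Applying the functorial factorization of $0\to V$ yields a cofibrant $QV$ together with a trivial fibration $c:QV\to V$; by the construction of the right transfer, $Uc$ is surjective and $U\ker(c)\cong\ker(Uc)$ has finite projective dimension, so $\ker(c)$ is $I$-trivial.

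The only non-formal step, and what I expect to be the main obstacle, is to upgrade $c$ from an $I$-epimorphism to a genuine epimorphism of $G$-representations; the two notions differ in general, so the hypothesis on $V$ must be used in an essential way. Writing $W=\im(c)\subseteq V$, the left exactness of $U$ applied to the factorization $QV\twoheadrightarrow W\hookrightarrow V$ forces $UW=V^I$, hence $V^I\subseteq W$, and since $V$ is generated by its $I$-invariants one concludes $V=k[G]\cdot V^I\subseteq W$, so $W=V$. The resulting short exact sequence $0\to\ker(c)\to QV\stackrel{c}{\to} V\to 0$ is then both exact and $I$-exact, i.e.\ it lies in $\mathcal{P}$, and the hypothesis $\Ext^1_\mathcal{P}(V,\ker(c))=0$ splits it. Closure of cofibrant objects under retracts then shows $V$ is cofibrant.
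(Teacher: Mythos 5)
Your proposal is correct. The direction (i) $\Rightarrow$ (ii) coincides with the paper's argument: generation by $I$-invariants comes from Lemma \ref{cofib_I_inv}, and any extension in $\mathcal{P}$ by an $I$-trivial object is a trivial fibration onto $V$, hence splits by cofibrancy. For (ii) $\Rightarrow$ (i) you take a genuinely different route. The paper verifies the left lifting property of $0\to V$ against an arbitrary trivial fibration $p:V'\to V''$ directly: it first uses generation by $I$-invariants to see that $\im(f)\subseteq\im(p)$ for the map $f:V\to V''$ to be lifted, reduces to the case where $p$ is a $\mathcal{P}$-epimorphism with $I$-trivial kernel, and then invokes the exactness of $\Hom_G(V,V')\to\Hom_G(V,V'')\to\Ext^1_{\mathcal{P}}(V,\ker(p))$ together with the vanishing hypothesis. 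You instead run the retract argument: you show that the cofibrant replacement $c:QV\to V$, a priori only an $I$-epimorphism, is genuinely surjective because $\im(Uc)=UV=V^I$ forces $V^I\subseteq\im(c)$ and hence $V=k[G]\cdot V^I\subseteq\im(c)$; the resulting sequence lies in $\mathcal{P}$, has $I$-trivial kernel, splits by hypothesis, and exhibits $V$ as a retract of the cofibrant object $QV$. Both arguments hinge on exactly the same two ingredients (upgrading $I$-surjectivity to surjectivity via generation by $I$-invariants, and the $\Ext^1_{\mathcal{P}}$-vanishing to split or lift); yours is slightly more economical in that it only needs to treat the single trivial fibration $c$ rather than an arbitrary one, at the mild cost of invoking the existence of the (functorial) cofibrant replacement, which is available from Proposition \ref{GP_right_transfer}.
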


\begin{proof}
Assuming (i) $V$ is generated by its $I$-invariants (cf.\ Lemma \ref{cofib_I_inv}). Let $W\in \Rep^\infty_k(G)$ have finite $I$-projective dimension. Any element of $\Ext^1_{\mathcal{P}}(V,W)$ is the class of a short exact sequence $0\to W\to E\stackrel{p}{\to}V\to 0$ in $\mathcal{P}$. Then $Up$ is a surjection whose kernel $UW$ has finite projective dimension (cf.\ Lemma \ref{cond_trivial}). Thus, $p$ is a trivial fibration. Since $V$ is cofibrant it follows that $p$ admits a section. This implies $\Ext^1_{\mathcal{P}}(V,W)=0$.\\

Assuming (ii) let $p:V'\to V''$ be a trivial fibration. To show that $V$ is cofibrant we must prove that any morphism $f:V\to V''$ lifts to a morphism $V\to V'$ along $p$. Note that $k[G]\cdot UV''\subseteq \im(p)$ because $p$ is an $I$-epimorphism. Since $V$ is assumed to be generated by its $I$-invariant we get $\im(f)\subseteq \im(p)$. Thus, we may assume that $V''=\im(p)$ and that $p$ is a $\mathcal{P}$-epimorphism. If we set $W=\ker(p)$ then the short exact sequence $0\to W\to V'\stackrel{p}{\to} V''\to 0$ belongs to $\mathcal{P}$. Since $p$ is a trivial fibration $W$ has finite $I$-projective dimension by Lemma \ref{cond_trivial}. By the long exact sequence on $\Ext_{\mathcal{P}}$ we get that $\Hom_G(V,V')\to\Hom_G(V,V'')\to \Ext^1_{\mathcal{P}}(V,W)$ is exact. Our vanishing assumption therefore implies that $f$ lifts to a map $V\to V'$ as required. 
\end{proof}

\begin{rem} As in Remark \ref{relative_ext_ambiguous} the group $\Ext^1_{\mathcal{P}}(V,W)$ is not to be confused with the relative extension group $\Ext^1_{G,I}(V,W)$ defined in (\ref{relative_Ext}). Since we have a functorial isomorphism $\Ext^1_{G,I}(V,W)\cong \Ext^1_H(V^I, W^I)$ (cf.\ Corollary \ref{Ext_comparison}) there is however a canonical group homomorphism $\Ext^1_{\mathcal{P}}(V,W)\to \Ext^1_{G,I}(V,W)$ sending an equivalence class $[0\to W\to E\to V\to 0]$ to the class $[0\to UW\to UE\to UV\to 0]$.
\end{rem}

We are now able to describe the following class of cofibrations by arguing as in the proof of \cite{Hov2}, Proposition 4.2.

\begin{prop}\label{good_monos_cofib} Any $\mathcal{P}$-monomorphism $i:V\to W$ with a cofibrant cokernel is a cofibration.
\end{prop}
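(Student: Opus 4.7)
The plan is to verify the left lifting property of $i$ directly against an arbitrary trivial fibration $p: X \to Y$. So suppose we are given a commutative square with top arrow $f: V \to X$, bottom arrow $g: W \to Y$, and let $K = \ker(p)$, which has finite $I$-projective dimension by Lemma \ref{cond_trivial}. The first step would be to reduce to the case that $p$ is surjective. Since the cokernel $C$ of $i$ is cofibrant, it is generated by its $I$-invariants (Lemma \ref{cofib_I_inv}), and the $I$-exactness of $0 \to V \to W \to C \to 0$ implies that $W^I \to C^I$ is surjective; together these yield $W = i(V) + k[G]\cdot W^I$. Since $g(i(V)) \subseteq \im(p)$ and $g(W^I) \subseteq Y^I = p(X^I) \subseteq \im(p)$, one concludes $g(W) \subseteq \im(p)$, and can replace $Y$ with $\im(p)$; the resulting sequence $0 \to K \to X \to \im(p) \to 0$ is exact and still $I$-exact (because $Y^I \subseteq \im(p)$), hence lies in $\mathcal{P}$.

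Next I would form the pullback $E = X \times_Y W$ with projections $\pi_X: E \to X$ and $\pi_W: E \to W$, observing that a lift $h: W \to X$ of the square is the same data as a section $\sigma: W \to E$ of $\pi_W$ extending the canonical map $(f,i): V \to E$ (with $h = \pi_X\sigma$). The kernel of $\pi_W$ is $K$, and $0 \to K \to E \to W \to 0$ lies in $\mathcal{P}$: exactness follows from the surjectivity of $p$, and $I$-exactness from the fact that any $w \in W^I$ satisfies $g(w) \in Y^I = p(X^I)$, so lifts to some $(x, w) \in E^I$. Quotienting by the image of $(f,i): V \hookrightarrow E$ and comparing with $0 \to V \to W \to C \to 0$ via the snake lemma produces a short exact sequence $0 \to K \to E/V \to C \to 0$, which again lies in $\mathcal{P}$ because the composite surjection $UE \to UW \to UC$ factors through $U(E/V)$.

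Finally, by Lemma \ref{cofibrant_ext} the cofibrancy of $C$ together with $K$ having finite $I$-projective dimension gives $\Ext^1_{\mathcal{P}}(C, K) = 0$, so the sequence $0 \to K \to E/V \to C \to 0$ splits, say via $s: C \to E/V$. Pulling $0 \to V \to E \to E/V \to 0$ back along $s$ then yields $0 \to V \to s^*E \to C \to 0$ together with a comparison to $0 \to V \to W \to C \to 0$ which is the identity on the outer terms; the five lemma forces $s^*E \to W$ to be an isomorphism, and its inverse composed with $s^*E \to E \xrightarrow{\pi_X} X$ is the desired lift $h$. The main delicate point I expect is the careful verification that each short exact sequence constructed along the way is genuinely in $\mathcal{P}$ (i.e., $I$-exact as well as exact), since without this the crucial Ext-vanishing provided by Lemma \ref{cofibrant_ext} would not apply.
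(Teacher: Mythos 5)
Your argument is correct. The opening reduction is exactly the paper's: from the cofibrancy of $C=\coker(i)$ and the $I$-exactness of $0\to V\to W\to C\to 0$ you get $W=\im(i)+k[G]\cdot UW$, hence $\im(g)\subseteq\im(p)$, so one may assume $p$ is a $\mathcal{P}$-epimorphism; and the decisive input is the same vanishing $\Ext^1_{\mathcal{P}}(C,K)=0$ from Lemma \ref{cofibrant_ext} together with Lemma \ref{cond_trivial}. Where you diverge is in how that vanishing is exploited. The paper runs the two six-term exact sequences of $\Ext_{\mathcal{P}}$ (in each variable) to form a $3\times 3$ diagram: the vanishing of $\Ext^1_{\mathcal{P}}(C,K)$ makes $p_*$ injective on $\Ext^1_{\mathcal{P}}(C,V')$, which kills the obstruction $\delta f$ and extends $f$ to some $h:W\to V'$; a second step then corrects $h$ by lifting the discrepancy $g-ph$ (which factors through $C$) against the trivial fibration. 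You instead package the entire lifting problem into the pullback $E=X\times_Y W$, check that $0\to K\to E/V\to C\to 0$ lies in $\mathcal{P}$, and read off the lift from the splitting of that single extension class. Your route trades the diagram chase and the correction step for the verification that the auxiliary sequences are genuinely in $\mathcal{P}$ (which you carry out correctly — the surjectivity of $E^I\to W^I$ via $Y^I=p(X^I)$ is the key point), and it makes the obstruction-theoretic content more transparent; the paper's route stays closer to Hovey's template and avoids constructing new objects. Both are complete proofs of comparable length.
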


\begin{proof}
Let $p:V'\to V''$ be a trivial fibration in a commutative square
\[
\xymatrix{
V\ar[r]^f\ar[d]_i & V'\ar[d]^p\\
W\ar[r]_g & V''.}
\]
Note that $W=\im(i)+k[G]\cdot UW$ because $W\to \coker(i)$ is an $I$-epimorphism and $\coker(i)$ is generated by its $I$-invariants (cf.\ Lemma \ref{cofib_I_inv}). Since $p$ is an $I$-epimorphism we get $\im(g)\subseteq\im(p)$. As in the previous proof we may therefore assume that $V''=\im(p)$ and that $p$ is a $\mathcal{P}$-epimorphism. Write $K=\ker(p)$ and $C=\coker(i)$. Since $i$ is a $\mathcal{P}$-monomorphism we can consider long exact sequences to obtain the following commutative diagram whose rows and columns are exact.
\[
\begin{tikzcd}
\Hom_G(W,K)\arrow{r}\arrow{d} & \Hom_G(W,V')\arrow{d} \arrow{r} & \Hom_G(W,V'')\arrow{d}{i^*}\\
\Hom_G(V,K)\arrow{r}\arrow{d} & \Hom_G(V,V')\arrow{d}{\delta}\arrow{r}{p_*} & \Hom_G(V,V'')\arrow{d}{\delta}\\
\Ext^1_{\mathcal{P}}(C,K)\arrow{r} & \Ext^1_{\mathcal{P}}(C,V')\arrow{r}{p_*} & \Ext^1_{\mathcal{P}}(C,V'')
\end{tikzcd}
\]
Since $C$ is cofibrant and $K$ has finite $I$-projective dimension (cf.\ Lemma \ref{cond_trivial}) we have $\Ext^1_{\mathcal{P}}(C,K)=0$ by Lemma \ref{cofibrant_ext}. Now the maps $f$ and $g$ satisfy $p_*f=i^*g$ and therefore $p_*\delta f=\delta p_*f=\delta i^* g=0$. This implies $\delta f=0$ whence there exists $h: W\to V'$ with $f=h i$.\\

By construction $g-p h:W\to V''$ is zero on $\im(i)$ and therefore factors through a map $\alpha: C\to V''$. Since $C$ is cofibrant and $p$ is a trivial fibration $\alpha$ lifts to a map $\beta: C\to V'$ such that $p \beta=\alpha$. Precomposing $\beta$ with the quotient map $W\to C$ we obtain a map $j:W\to V'$ satisfying $p j=g-p h$. Therefore, the sum $h+j:W\to V'$ gives our desired lift of $i$.
\end{proof}

\begin{rem} By definition of the Gorenstein projective model structure (cf.\ Theorem \ref{GP_R_Mod}) the cofibrations in $\Mod(H)$ are precisely the inclusions $\ker(q)\to M$ where $M$ is arbitrary and $q:M\to N$ is a fibration onto a cofibrant object $N$. Using the above result we see that the analogous maps in $\Rep^\infty_k(G)$ are also cofibrations. Indeed, if $X\in\Rep^\infty_k(G)$ is cofibrant and if $p:W\to X$ is an $I$-epimorphism then $p$ is a $\mathcal{P}$-epimorphism because $X$ is generated by its $I$-invariants (cf.\ Lemma \ref{cofib_I_inv}). Therefore, the inclusion $\ker(p)\to W$ is a $\mathcal{P}$-monomorphism with cokernel $X$ and so is a cofibration.
\end{rem}

Recall that the adjunction in Proposition \ref{GP_right_transfer} is Quillen. We now investigate how far it is from being a Quillen equivalence. In a first step we make explicit the homotopy relation in $\Rep^\infty_k(G)$ following \cite{Hov2}, Proposition 9.1.

\begin{lem}\label{IGP_homotopy}Let $V,W\in\Rep^\infty_k(G)$ with $V$ cofibrant. If $f,g\in\Hom_G(V,W)$ then $f$ and $g$ are homotopic if and only if $g-f$ factors through an $I$-projective. 
\end{lem}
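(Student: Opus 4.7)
The plan is to exploit the explicit path objects constructed in the proof of Proposition \ref{GP_right_transfer}. Note first that every object of $\Rep^\infty_k(G)$ is fibrant in the $I$-Gorenstein projective model structure, because the terminal map $W\to 0$ is trivially an $I$-epimorphism. Since $V$ is cofibrant and $W$ is fibrant, the left and right homotopy relations on $\Hom_G(V,W)$ agree with the homotopy relation and are independent of the choice of cylinder or path object (cf.\ \cite{HovBook}, Proposition 1.2.5). I will therefore work exclusively with right homotopies.

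Recall the path object construction from Proposition \ref{GP_right_transfer}: given any $I$-epimorphism $q\colon Y\to W$ with $Y$ an $I$-projective representation, one obtains a path object $W\stackrel{i}{\to}W\oplus Y\stackrel{p}{\to}W\times W$ with $i(w)=(w,0)$ and $p(w,y)=(w,w+q(y))$. For the first direction, suppose $f\sim g$. Then there exists a right homotopy with respect to some such path object, say $H\colon V\to W\oplus Y$, $H(v)=(H_1(v),H_2(v))$. The identities $p_1 pH=f$ and $p_2 pH=g$ read $H_1=f$ and $f+qH_2=g$, so that $g-f=q\circ H_2$ factors through the $I$-projective $Y$.

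For the converse, assume $g-f=\varphi\circ h$ for some $h\colon V\to Y'$ and $\varphi\colon Y'\to W$ with $Y'$ an $I$-projective representation. Choose any $I$-epimorphism $q_0\colon Y_0\to W$ with $Y_0$ an $I$-projective (existence recalled after Corollary \ref{proj_equiv}), and form $Y:=Y'\oplus Y_0$ together with $q:=(\varphi,q_0)\colon Y\to W$. Then $Y$ is $I$-projective and $q$ is an $I$-epimorphism because $q_0$ is. Using the corresponding path object $W\oplus Y$, define $H\colon V\to W\oplus Y$ by $H(v)=(f(v),(h(v),0))$. A direct computation gives $pH(v)=(f(v),f(v)+\varphi h(v))=(f(v),g(v))$, so $H$ is a right homotopy from $f$ to $g$.

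The only mild obstacle is the asymmetry in the hypothesis of the converse, namely that the $I$-projective $Y'$ through which $g-f$ factors need not admit an $I$-epimorphism onto $W$; the trick of enlarging $Y'$ by summing with a chosen $I$-projective cover $Y_0\to W$ resolves this, and then the explicit form of the path object produced in the proof of Proposition \ref{GP_right_transfer} makes both directions essentially a bookkeeping exercise.
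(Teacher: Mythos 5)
Your argument is correct and follows the paper's proof essentially verbatim: both directions are read off from the explicit path object $W\to W\times Y\to W\times W$ of Proposition \ref{GP_right_transfer}, using that $V$ is cofibrant so the right homotopy can be taken through any path object (the precise reference is \cite{HovBook}, Corollary 1.2.6). The only difference is in the converse, where the paper lifts the map $Y'\to W$ through a fixed $I$-epimorphism $q\colon Y\to W$ using $I$-projectivity of $Y'$, while you instead enlarge the path object to $W\times(Y'\oplus Y_0)$ — a harmless variant of the same idea.
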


\begin{proof}
Since $W$ is fibrant it follows from \cite{HovBook}, Proposition 1.2.5 (v), that $f$ and $g$ are homotopic if and only if they are right homotopic. So assume that $f$ and $g$ are right homotopic. By \cite{HovBook}, Corollary 1.2.6, there is a homotopy from $f$ to $g$ through any path object of $W$. In particular, we may take the path object $W\stackrel{i}{\to}W\times Y\stackrel{p}{\to}W\times W$ of $W$ constructed in the proof of Proposition \ref{GP_right_transfer}. Recall that this involves an $I$-epimorphism $q:Y\to W$ where $Y$ is $I$-projective. There is then a map $H=(r,s):V\to W\times Y$ with $p H=(f,g)$. By construction of $p$ this means $f=r$ and $g=r+q s$. Thus, $g-f=q s$ factors through the $I$-projective $Y$.\\

Conversely, assume that $g-f$ admits a factorization $V\to Z\stackrel{j}{\to} W$ where $Z$ is $I$-projective. Then $j$ factors as $Z\to Y\stackrel{q}{\to}W$ because $q$ is an $I$-epimorphism. Consequently, $g-f=q h$ where $h$ denotes the composition $V\to Z\to Y$. Since $g=f+q h$ we see that $f$ and $g$ are right homotopic by means of $H=(f,h):V\to W\times Y$.
\end{proof}

As a consequence we obtain that the statements in Corollary \ref{ess_surj} pass to the homotopy level. We continue to endow $\Mod(H)$ with the Gorenstein projective model structure and $\Rep^\infty_k(G)$ with the $I$-Gorenstein projective model structure.

\begin{thm}\label{LF_faithful}
In the derived adjunction
\[
LF:\Ho(\Mod(H))\rightleftarrows\Ho(\Rep_k^\infty(G)):RU
\]
the functor $LF$ is faithful and the functor $RU$ is essentially surjective.
\end{thm}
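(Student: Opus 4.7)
The plan is to prove the two assertions independently. First I note that because fibrations in $\Rep^\infty_k(G)$ are by definition those morphisms $f$ with $Uf$ a fibration in $\Mod(H)$, and because every $H$-module is fibrant in the Gorenstein projective structure, every object of $\Rep^\infty_k(G)$ is fibrant. Consequently $RU$ requires no fibrant replacement and is naturally isomorphic, on the homotopy level, to the functor induced by $U:\Rep^\infty_k(G)\to\Mod(H)$. Under this identification, essential surjectivity of $RU$ reduces to essential surjectivity of the composition $\Rep^\infty_k(G)\xrightarrow{U}\Mod(H)\to\Ho(\Mod(H))$, which is exactly the content of Corollary \ref{ess_surj}. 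So this half of the theorem is essentially a direct citation.

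For faithfulness of $LF$, I first translate the statement into the explicit models of the homotopy categories. By Remark \ref{stable_modules}, $\Ho(\Mod(H))$ is $\mathrm{GProj}(H)/\Proj(H)$: morphisms between Gorenstein projective modules are $H$-linear maps modulo those that factor through a projective. By Lemma \ref{IGP_homotopy}, morphisms in $\Ho(\Rep^\infty_k(G))$ between cofibrant (hence cofibrant-fibrant) representations are $G$-equivariant maps modulo those that factor through an $I$-projective. Since $F$ is left Quillen, $FM$ and $FN$ are cofibrant whenever $M,N$ are Gorenstein projective, so that $LF[h]=[Fh]$. Faithfulness therefore amounts to the implication: if $h:M\to N$ between Gorenstein projectives is such that $Fh$ factors through an $I$-projective, then $h$ factors through a projective $H$-module.

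To establish this implication I would proceed as follows. By Corollary \ref{proj_equiv}, every $I$-projective is of the form $FP$ for some projective $P\in\Mod(H)$, so write $Fh=\beta\circ\alpha$ with $\alpha:FM\to FP$ and $\beta:FP\to FN$. Naturality of the unit gives $\eta_N\circ h=UFh\circ\eta_M=U\beta\circ U\alpha\circ\eta_M$. The key input is now Proposition \ref{GP_split_mono}(ii): since $N$ is Gorenstein projective, realize $N=B_0Y$ for an acyclic complex $Y$ of projectives and obtain an explicit $H$-linear retraction $s:UFN\to N$ of $\eta_N$. Composing with $s$ and using that $\eta_P$ is an isomorphism by Lemma \ref{unit_proj}, one then reads off the factorization
\[
h \;=\; s\circ\eta_N\circ h \;=\; \bigl(s\circ U\beta\circ\eta_P\bigr)\circ\bigl(\eta_P^{-1}\circ U\alpha\circ\eta_M\bigr)
\]
of $h$ through the projective module $P$, as required.

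The main obstacle is the faithfulness half: because the cofibrant objects in $\Mod(H)$ are only Gorenstein projective rather than projective, one cannot simply invert the unit as in Proposition \ref{Quillen_equivalence}. The split-monomorphism property from Proposition \ref{GP_split_mono}(ii) is precisely what replaces this missing invertibility, and the whole argument hinges on having both this retraction \emph{and} the isomorphism $\eta_P$ for projective $P$ available simultaneously. Essential surjectivity of $RU$, by contrast, falls out formally from previous results.
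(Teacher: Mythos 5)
Your proposal is correct and follows essentially the same route as the paper: essential surjectivity is read off from Corollary \ref{ess_surj}, and faithfulness is reduced (via the explicit homotopy relations of Remark \ref{stable_modules} and Lemma \ref{IGP_homotopy}) to showing that a map between Gorenstein projectives whose image under $F$ factors through an $I$-projective must itself factor through a projective, which you obtain exactly as the paper does by applying $U$, using the retraction of $\eta_N$ from Proposition \ref{GP_split_mono}(ii), and the projectivity of $UY$ for $Y$ $I$-projective. The only cosmetic difference is that you write the $I$-projective as $FP$ via Corollary \ref{proj_equiv} and factor through $P$ directly, whereas the paper factors through $UY$ and invokes Lemma \ref{summand_I_proj}.
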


\begin{proof}
That $RU$ is essentially surjective follows directly from Corollary \ref{ess_surj}. In order to show that $LF$ is faithful let $Q_c$ denote the cofibrant replacement functor of $\Mod(H)$. Recall from (\ref{left_derived}) that $LF=\Ho(F)\Ho(Q_c)$. By the proof of \cite{HovBook}, Proposition 1.2.3, the functor $\Ho(Q_c)$ is always an equivalence. Therefore, it suffices to show that $\Ho(F)$ is faithful. Note that $F$ preserves cofibrant objects because it is left Quillen. Since all objects of $\Mod(H)$ and $\Rep^\infty_k(G)$ are fibrant it suffices to see that if $M,N\in\Mod(H)$ are Gorenstein projective then the map $\Hom_H(M,N)\to\Hom_G(FM,FN)$ induced by $F$ reflects the respective homotopy relations (cf.\ \cite{HovBook}, Theorem 1.2.10 (ii)).\\

Let $f,g\in\Mod_H(M,N)$ and assume that $Ff$ is homotopic to $Fg$. By Lemma \ref{IGP_homotopy} there is an $I$-projective object $Y\in\Rep^\infty_k(G)$ such that $Ff-Fg=F(f-g)$ admits a factorization $FM\to Y\to FN$. Applying $U$ we obtain the commutative diagram
\[
\xymatrix{
UFM\ar[r]&UY\ar[r]&UFN\\
M\ar[u]^{\eta_M}\ar[rr]^{f-g}&&N.\ar[u]_{\eta_N}
}
\]
Since $\eta_N$ admits a left inverse (cf.\ Proposition \ref{GP_split_mono}) it follows that $f-g$ factors through $UY$. However, $UY$ is a projective $H$-module (cf.\ Lemma \ref{summand_I_proj}). Therefore, $f$ and $g$ are homotopic by \cite{Hov2}, Proposition 9.1.
\end{proof}

In Theorem \ref{derived_right_inverse} the previous results will be strengthened significantly. In fact, $RU$ admits a right inverse and becomes an equivalence when restricted to a suitable subcategory of $\Ho(\Rep^\infty_k(G))$.\\

In order to compute the loop and suspension functors on $\Ho(\Rep^\infty_k(G))$ we need to compute both cylinder and path objects.

\begin{lem}\label{cylinder_obj} Let $V,W\in\Rep^\infty_k(G)$ and suppose that $W$ is $I$-projective.
\begin{itemize}
\item[(i)]Given a cofibration $q:V\to W$ the factorization $V\oplus V\stackrel{i}{\to} V\oplus W\stackrel{p}{\to}V$ given by $i(v,v')=(v+v', q(v'))$ and $p(v,w)=v$ exhibits $V\oplus W$ as a cylinder object for $V$. In particular, if $V$ is cofibrant then we have $\Sigma V\cong \coker(i)\cong\coker(q)$ in $\Ho(\Rep^\infty_k(G))$.
\item[(ii)]Given a fibration $q:W\to V$ the factorization $V\stackrel{i}{\to}V\times W\stackrel{p}{\to}V\times V$ given by $i(v)=(v,0)$ and $p(v,w)=(v,v+q(w))$ exhibits $V\times W$ as a path object for $V$. In particular, we have $\Omega V\cong\ker(p)\cong\ker(q)$ in $\Ho(\Rep^\infty_k(G))$.
\end{itemize}
\end{lem}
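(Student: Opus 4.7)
The strategy is to verify directly that the two factorizations are a cylinder object and a path object respectively in the $I$-Gorenstein projective model structure, and then to read off the suspension and loop from them using the standard prescription in a pointed model category.

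I would begin with part (ii), which essentially extracts the path object construction already carried out in the proof of Proposition \ref{GP_right_transfer}. A direct computation gives $pi(v) = (v, v + q(0)) = (v,v)$, so $pi = \Delta_V$. Applying $U$, the map $Up$ sends $(v,w)$ to $(v, v + Uq(w))$, and is surjective precisely because $Uq$ is (as $q$ is a fibration), so $p$ is a fibration. The map $i$ is split by the first projection $V \times W \to V$, and the corresponding direct sum decomposition identifies $\coker(i) \cong W$, which is $I$-projective; by Lemma \ref{triv_cof_split} this forces $i$ to be a trivial cofibration, and in particular a weak equivalence. For the computation of $\Omega V$, I invoke the standard description of the loop functor in a pointed model category as the pullback of a path object along the zero map $0 \to V \times V$. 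In the abelian ambient category $\Rep^\infty_k(G)$ this pullback is $\ker(p)$, and the explicit computation $\ker(p) = \{(0,w) : q(w) = 0\}$ identifies it canonically with $\ker(q)$.

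For part (i), the codiagonal identity $pi(v,v') = v + v'$ is immediate. The map $p$ is an $I$-epimorphism (hence a fibration) whose kernel is canonically $W$; since $W$ is $I$-projective, $UW$ is a projective $H$-module, hence of projective dimension zero, and Lemma \ref{cond_trivial} gives that $W$ is $I$-trivial. Thus $p$ is a trivial fibration. To identify $i$ as a cofibration, I would use the automorphism $\phi: V \oplus V \to V \oplus V$ defined by $\phi(v,v') = (v - v', v')$; a direct computation gives $i \circ \phi = \mathrm{id}_V \oplus q$, which is the pushout of $q: V \to W$ along the inclusion $V \hookrightarrow V \oplus V$ into the second summand. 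Since $q$ is a cofibration by assumption, pushout closure of cofibrations (\cite{HovBook}, Corollary 1.1.11) shows that $i \circ \phi$ is a cofibration, and composing with the isomorphism $\phi^{-1}$ gives that so is $i$.

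Finally, for the computation of $\Sigma V$ I would appeal to the general principle that in a pointed model category, for $V$ cofibrant and any cylinder object $V \oplus V \stackrel{i}{\to} C \to V$, the suspension $\Sigma V$ may be computed as the pushout of $0 \leftarrow V \oplus V \stackrel{i}{\to} C$. In the abelian category $\Rep^\infty_k(G)$ this pushout is precisely $\coker(i)$. The same change of coordinates $\phi$ yields $\coker(i) = \coker(\mathrm{id}_V \oplus q) \cong \coker(q)$. The most delicate step is this last one, namely invoking the abstract pushout description of the suspension and reconciling it with the concrete cylinder at hand; once that is settled all other verifications reduce to the direct computations outlined above.
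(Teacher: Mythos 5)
Your proposal is correct and follows essentially the same route as the paper: verify that $p$ is a (trivial) fibration by applying $U$, exhibit $i$ as a pushout of the given cofibration $q$ (your coordinate change $\phi$ is just a reparametrization of the paper's pushout square along $\delta(v)=(-v,v)$), and then read off $\Sigma$ and $\Omega$ as the cokernel, respectively kernel, supplied by the standard cofiber/fiber description of suspension and loop for cofibrant, respectively fibrant, objects.
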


\begin{proof}
As for (i) we note that $p$ is a split epimorphism with an $I$-trivial kernel, hence is a trivial fibration. In order to see that the above factorization gives a cylinder object for $V$ we need to show that $i$ is a cofibration. Observe that the  diagram
\[
\begin{tikzcd}
V \arrow{r}{\delta} \arrow[swap]{d}{q} & V\oplus V\arrow{d}{i}\\
W\arrow[swap]{r}{j} &V\oplus W
\end{tikzcd}
\]
given by $\delta(v)=(-v,v)$ and $j(w)=(0,w)$ is a pushout square. Thus, together with $q$ also $i$ is a cofibration (cf.\ \cite{HovBook}, Corollary 1.1.11). If $V$ is cofibrant then $\Sigma V\cong\coker(i)$ in $\Ho(\Rep^\infty_k(G))$ by definition of the suspension. However, the map $\coker(q)\to \coker(i)$ induced by $j$ is an isomorphism in $\Rep^\infty_k(G)$ by \cite{Sta21}, 08N3.\\

The construction of the path object in (ii) is taken from the proof of Proposition \ref{GP_right_transfer}. Since $V$ is fibrant we have $\Omega V\cong\ker(p)$ in $\Ho(\Rep^\infty_k(G))$ by definition of the loop. However, the map $\ker(q)\to\ker(p)$ given by $w\mapsto (0,w)$ is an isomorphism in $\Rep^\infty_k(G)$.
\end{proof}

Because of condition (iii) in the following result we do not expect that the adjunction $F:\Mod(H)\rightleftarrows\Rep^\infty_k(G):U$ is a Quillen equivalence in general.

\begin{prop}\label{characterization_equiv}
The following statements are equivalent.
\begin{itemize}[wide]
\item[(i)]The adjunction $F:\Mod(H)\rightleftarrows\Rep^\infty_k(G):U$ is a Quillen equivalence.
\item[(ii)]The left derived functor $LF:\Ho(\Mod(H))\to\Ho(\Rep^\infty_k(G))$ is full.
\item[(iii)]For any acyclic complex $Y\in\Ch(H)$ of projective $H$-modules the $G$-representation $H_1FY$ is $I$-trivial, i.e.\ the $H$-module $UH_1FY$ has finite projective dimension.
\item[(iv)]The model category $\Rep^\infty_k(G)$ is stable.
\end{itemize}
\end{prop}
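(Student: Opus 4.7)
The overall plan is to prove the chain $(i)\Leftrightarrow(ii)\Leftrightarrow(iii)$ directly, and then relate these to $(iv)$. For $(i)\Leftrightarrow(iii)$, Lemma \ref{unit_equiv} reduces $(i)$ to asking that the unit $\eta_M:M\to UFM$ be a weak equivalence for every cofibrant, i.e., Gorenstein projective, $M$. Every such $M$ arises as $M=B_0Y$ for an acyclic complex $Y$ of projective $H$-modules (as recalled at the start of Section \ref{section_3}), and Proposition \ref{GP_split_mono}(ii) presents $\eta_M$ as a split monomorphism with cokernel $UH_1FY$. A split monomorphism $X\hookrightarrow X\oplus C$ is a weak equivalence in $\Mod(H)^{\mathrm{GP}}$ iff $C$ is trivial: the splitting $X\oplus C\to X$ is a fibration with kernel $C$, hence a trivial fibration iff $C$ has finite projective dimension, and $2$-out-of-$3$ applied to $\id_X$ transfers this to the inclusion. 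The condition therefore becomes exactly (iii).

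For $(i)\Leftrightarrow(ii)$, Theorem \ref{LF_faithful} already ensures $LF$ is faithful and $RU$ essentially surjective. The direction $(i)\Rightarrow(ii)$ is immediate. Conversely, if $LF$ is full then it is fully faithful, so the derived unit $\id\to RU\circ LF$ is an isomorphism; on cofibrant objects $\eta_M$ is then a weak equivalence, and Lemma \ref{unit_equiv} delivers $(i)$. (To check directly that the counit is then an isomorphism: by the very definition of the right transfer, $U$ reflects weak equivalences, so $RU$ is conservative on homotopy categories; combined with essential surjectivity of $RU$ and the triangle identity this forces $\varepsilon$ to be iso.) For $(i)\Rightarrow(iv)$, $\Mod(H)^{\mathrm{GP}}$ is abelian and hereditary (Section \ref{section_3}), hence stable by \cite{Beck14}, Corollary 1.1.15, and stability is transferred across any Quillen equivalence since $LF$ commutes with $\Sigma$, $RU$ commutes with $\Omega$, and the derived functors are inverse equivalences of homotopy categories.

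The remaining implication $(iv)\Rightarrow(iii)$ is the technical heart of the proposition; the plan is to exploit Lemma \ref{cylinder_obj} to compute $\Sigma$ and $\Omega$ on objects of the form $FM$. The cofibration $M\hookrightarrow Y_0$ with $Y_0$ projective is carried by the left Quillen functor $F$ to a cofibration $FM\hookrightarrow FY_0$ into an $I$-projective, and right-exactness of $F$ identifies the cokernel with $F\Sigma M$, so Lemma \ref{cylinder_obj}(i) yields $\Sigma(FM)\cong F\Sigma M$ in $\Ho(\Rep^\infty_k(G))$. Assuming $(iv)$ we obtain $\Omega F\Sigma M\cong FM$, which we aim to compare with Lemma \ref{cylinder_obj}(ii) and with the $I$-exact triangle $H_1FY\to FM\to B_0FY\to\Sigma H_1FY$ arising from Proposition \ref{GP_split_mono}(ii), in order to conclude that $H_1FY\cong 0$ in $\Ho(\Rep^\infty_k(G))$; since $U$ reflects weak equivalences, this forces $UH_1FY$ to have finite projective dimension, which is (iii). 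The main obstacle is constructing a natural fibration from an $I$-projective onto $F\Sigma M$ needed to apply Lemma \ref{cylinder_obj}(ii): the obvious candidate $FY_0\to F\Sigma M$ arising from the surjection $Y_0\twoheadrightarrow\Sigma M$ fails to be an $I$-epimorphism precisely when $UH_0FY\ne 0$, since $UF\Sigma M\cong\Sigma M\oplus UH_0FY$ while $U$ applied to this candidate lands only in the $\Sigma M$-summand. One must therefore enlarge the source by an auxiliary $I$-free summand $\XX\otimes_k UH_0FY$ covering $UH_0FY$, compute the resulting kernel, and thread it through the $I$-exact sequence of Proposition \ref{GP_split_mono}(ii). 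This bookkeeping, together with the stability hypothesis used to rotate the triangle, is what converts $(iv)$ into the vanishing statement $(iii)$.
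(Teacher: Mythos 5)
Your treatment of $(i)\Leftrightarrow(ii)\Leftrightarrow(iii)$ and of $(i)\Rightarrow(iv)$ is correct and essentially the paper's argument: the paper also reduces everything to whether the unit $\eta_M$ is a weak equivalence on cofibrant $M=B_0Y$, identifies its cokernel as $UH_1FY$ via Proposition \ref{GP_split_mono}, and uses Lemma \ref{unit_equiv} together with the fact that a fully faithful left adjoint has invertible unit. Your direct verification that a split monomorphism with cokernel $C$ is a weak equivalence iff $C$ is trivial is a fine substitute for the paper's citation of \cite{Hov2}, Lemma 5.8.

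The implication $(iv)\Rightarrow(iii)$, however, is not proved: you describe a plan, identify an obstacle, and then state that ``this bookkeeping \ldots is what converts $(iv)$ into $(iii)$'' without carrying it out. This is the technical heart of the proposition, and two points in your sketch are genuinely problematic. First, you propose to ``rotate'' the $I$-exact sequence $0\to H_1FY\to FB_0Y\to B_0FY\to 0$ as a triangle in $\Ho(\Rep^\infty_k(G))$. But this homotopy category is not known to be triangulated (indeed, $(iv)$ is exactly the statement that the model structure is stable, and even granting $(iv)$ you would still need the sequence to be a cofiber sequence, i.e.\ the first map to be a cofibration with the right cokernel --- which amounts to knowing $H_1FY$ is well-behaved, dangerously close to what you are trying to prove). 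Second, the ``obstacle'' you worry about (that $FY_0\to F\Sigma M$ need not be an $I$-epimorphism) is a non-issue: to apply Lemma \ref{cylinder_obj} (ii) one may take \emph{any} $I$-epimorphism $q':Z\to FM$ with $Z$ $I$-projective, e.g.\ $Z=\XX\otimes_kUFM$; the real work lies elsewhere. The paper's actual route is: using stability of $\Mod(H)$, write down the natural transformation $LF\Omega\to\Omega LF$ coming from the unit $\id\to\Omega\Sigma$ in $\Ho(\Rep^\infty_k(G))$; compute it explicitly on $M=B_{-1}Y$ via the cylinder and path objects of Lemma \ref{cylinder_obj} as a concrete map $f:FB_0Y\to\ker(p)$ factoring through the canonical surjection $FB_0Y\to B_0FY$; observe that $(iv)$ forces $f$, hence $Uf$, to be a weak equivalence; and finally factor $Uf$ as a trivial cofibration followed by a trivial fibration to exhibit $UH_1FY$ (the kernel of the split surjection $UFB_0Y\to UB_0FY$) as a direct summand of a module of finite projective dimension. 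None of this factorization argument appears in your proposal, so the proof of $(iv)\Rightarrow(iii)$ remains a gap.
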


\begin{proof}
That (i) implies (ii) follows from \cite{HovBook}, Proposition 1.3.13. Assuming (ii) the functor $LF$ is fully faithful by Theorem \ref{LF_faithful}. This implies that the unit of the derived adjunction is an isomorphism. Let $X\in\Mod(H)$ be cofibrant and denote by $f:FX\to Z$ the fibrant replacement of $FX$ in $\Rep^\infty_k(G)$. By the proof of \cite{HovBook}, Proposition 1.3.13, the map $X\stackrel{\eta_X}{\to}UFX\stackrel{Uf}{\to}UZ$ is a weak equivalence. However, since $f$ is a weak equivalence so is $Uf$ by definition of the right transfer. By the 2-out-of-3 property $\eta_X$ is a weak equivalence for any cofibrant object $X$. By Lemma \ref{unit_equiv} this implies (i).\\

If a cofibrant object $X$ is realized as $X=B_0Y$ for some acyclic complex $Y$ of projective $H$-modules then $\eta_X$ is a monomorphism with cokernel $UH_1FY$ (cf.\ Proposition \ref{GP_split_mono}). By \cite{Hov2}, Lemma 5.8, the map $\eta_X$ is a weak equivalence if and only if the $H$-module $UH_1FY$ has finite projective dimension. Therefore, the above arguments show that (ii) is equivalent to (iii). Assuming (i) the functors $LF$ and $RU$ are equivalences of categories commuting with $\Sigma$ and $\Omega$, respectively (cf.\ \cite{BR}, Corollary 3.1.4). Since $\Mod(H)$ is stable (cf.\ \cite{Hov2}, Theorem 9.3) it follows formally that so is $\Rep^\infty_k(G)$.\\

Note that the unit $\id\to\Omega\Sigma$ of the adjunction in $\Ho(\Rep^\infty_k(G))$ gives rise to a natural transformation $LF\Omega\to \Omega\Sigma LF\Omega\cong \Omega LF\Sigma\Omega\cong \Omega LF$ using the stability of $\Mod(H)$. To describe this more explicitly let $Y=(Y_\bullet,d_\bullet)$ be an acyclic complex of projective $H$-modules and set $M=B_{-1}Y$. Then $\Omega M\cong B_0Y$ in $\Ho(\Mod(H))$ by the construction of the loop and suspension functors on abelian model categories. Since $B_0Y$ is cofibrant this gives $LF\Omega M\cong FB_0Y$. Similarly, $LFM\cong FM$ and therefore $\Omega LFM\cong\Omega FM$. Note that the canonical map $q:FB_0Y\to FY_0$ is a cofibration and $FB_0Y$ is cofibrant since $F$ is left Quillen. Therefore, Lemma \ref{cylinder_obj} (i) shows that $FB_0Y\oplus FY_0$ is a cylinder object for $FB_0Y$ and $\Sigma FB_0Y\cong \coker(q)\cong FM$. The corresponding adjoint morphism $\varphi:FB_0Y\to\Omega FM$ can then be identified with the aforementioned natural transformation $LF\Omega M\to \Omega LFM$. Assuming (iv) the map $\varphi$ is an isomorphism. We will show this implies (iii).\\

By the proof of Lemma \ref{cylinder_obj} (i) the isomorphism $\Sigma FB_0Y\cong FM$ is induced by the surjection $g:FB_0Y\oplus FY_0\to FM$ sending $(v,w)$ to $Fd_0(w)$. In order to compute the loop of $FM$ choose an $I$-epimorphism $q':Z\to FM$ where $Z$ is $I$-projective. By Lemma \ref{cylinder_obj} (ii) we have $\Omega FM=\ker(p)$ where $p:FM\times Z\to FM\times FM$ is given by $p(v,w)=(v,v+q'(w))$. Consider the commutative square
\[
\begin{tikzcd}
FB_0Y\arrow{r}{0} \arrow[swap]{d}{\iota} & FM\times Z\arrow{d}{p}\\
FB_0Y\oplus FY_0\arrow[swap]{r}{(0,g)} & FM\times FM
\end{tikzcd}
\]
where $\iota$ is the canonical map. Since $\iota$ is a trivial cofibration (cf.\ Lemma \ref{triv_cof_split}) and since $p$ is a fibration there is a map $H:FB_0Y\oplus FY_0\to FM\times Z$ such that $H \iota=0$ and $(0,g)=pH$. Consider the map $\iota':FB_0Y\to FB_0Y\oplus FY_0$ given by $\iota'(v)=(0,q(v))$. Since $FY$ is a complex the map $H\iota'$ factors through a map $f:FB_0Y\to\ker(p)$.  By the proof of \cite{BR}, Proposition 3.1.7, its homotopy class is the adjoint morphism $\varphi$. Note that together with $\iota'$ also $f$ factors through the canonical map $FB_0Y\to B_0FY$.\\

Since $\varphi$ is an isomorphism in $\Ho(\Rep^\infty_k(G))$ it follows from \cite{HovBook}, Theorem 1.2.10 (iv), that $f$ is a weak equivalence. By definition of the right transfer $Uf$ is a weak equivalence in $\Mod(H)$. As seen above it factors through the natural map $UFB_0Y\to UB_0FY$. However, the proof of Proposition \ref{GP_split_mono} shows that the latter is a split surjection with kernel $UH_1FY$. Setting $M_1=UB_0FY$ and $M_2=UH_1FY$ we identify it with the first projection $M_1\times M_2\to M_1$. Write $Uf=rj$ where $j$ is a trivial cofibration and $r$ is a trivial fibration. Then $j$ is a monomorphism with a projective cokernel and may be identified with a canonical inclusion of the form $M_1\times M_2\hookrightarrow P\times M_1\times M_2$ where $P$ is projective. Since $M_2\hookrightarrow M_1\times M_2$ is contained in the kernel of $Uf$ it follows that $M_2$ is a direct summand of $\ker(r)$. Since $\ker(r)$ has finite projective dimension so does $M_2=UH_1FY$.
\end{proof}

The condition in Proposition \ref{characterization_equiv} (iv) suggests to pass to some stable version of $\Rep^\infty_k(G)$. The most direct approach is to define the {\it stable homotopy category} $\underline{\Ho}(\Rep^\infty_k(G))$ of $\Rep^\infty_k(G)$ to have the same objects as $\Rep^\infty_k(G)$ and to define the set of morphisms between $X$ and $Y$ by
\[
\underline{[X,Y]}=\varinjlim_{n\geq 0}[\Omega^nX,\Omega^nY].
\]
The transition maps in this direct limit are induced by $\Omega$ and $[\Omega^nX,\Omega^nY]$ denotes the set of morphisms in the usual homotopy category. The loop functor induces a fully faithful functor $\underline{\Omega}$ on $\underline{\Ho}(\Rep^\infty_k(G))$. Moreover, since $RU$ commutes with $\Omega$ and since $\Mod(H)$ is stable there is an induced functor
\[
\underline{RU}:\underline{\Ho}(\Rep^\infty_k(G))\to\Ho(\Mod(H))
\]
given by $RU$ on objects and sending the class of $f\in[\Omega^nX,\Omega^nY]$ to $\Sigma^nRUf$. However, we currently do not know if this improves the properties of $RU$ further.


\section{Frobenius categories}\label{section_5}

%
%


Recall that a sequence $X\stackrel{i}{\to}Y\stackrel{p}{\to}Z$ in an additive category $\A$ is called {\it exact\- } if $i$ is a kernel of $p$ and if $p$ is a cokernel of $i$. An {\it exact category} is an additive category $\A$ endowed with a class of exact sequences satisfying the axioms in \cite{Bue10}, Definition 2.1. For the sake of clarity these exact sequences are sometimes called {\it admissible} and the morphism $i$ (resp.\ $p$) in an admissible exact sequence $X\stackrel{i}{\to}Y\stackrel{p}{\to}Z$ is called an {\it admissible monomorphism} (resp.\ {an \it admissible epimorphism}).\\

An object $Y$ of an exact category $\A$ is called {\it projective} (resp.\ {\it injective}) if the functor $\Hom_\A(Y,\cdot)$ (resp.\ $\Hom_\A(\cdot,Y)$) is {\it exact}, i.e.\ if it transforms admissible exact sequences into exact sequences of abelian groups. An exact category $\A$ is said to have {\it enough projectives} (resp.\ {\it injectives}) if for any object $X$ there is an admissible epimorphism $Y\to X$ (resp.\ an admissible monomorphism $X\to Y$) where $Y$ is projective (resp.\ injective). An exact category $\A$ with enough projectives and enough injectives is called a {\it Frobenius category} if the classes of projective and injective objects of $\A$ coincide.\\

In the following we will often assume that the exact category $\A$ is {\it weakly idempotent complete} in the sense of \cite{Bue10}, Definition 7.2. By \cite{Gil11}, Proposition 2.4, this is equivalent to the classes of admissible monomorphisms and admissible epimorphisms being closed under retracts. Hovey's correspondence in Theorem \ref{cotorsion_pairs} was generalized to weakly idempotent complete exact categories by Gillespie in \cite{Gil11}, Corollary 3.4. The corresponding model structures are called {\it exact} (cf.\ \cite{Gil11}, Definition 3.1). We note that the completeness of the cotorsion pairs in \cite{Gil11}, Definition 2.1, is not required to be functorial and that the bicompleteness assumption on $\A$ is relaxed significantly (cf.\ the beginning of \cite{Gil11}, \S4).\\

Recall the following standard result (cf. \cite{Li}, Theorem 1.1 and Remark 2.6).

\begin{prop}\label{Frobenius_model_structure}
Let $\A$ be a weakly idempotent complete Frobenius category. There is a unique exact model structure on $\A$ for which
\begin{itemize}
\item all objects are fibrant and cofibrant,
\item the trivial objects are the projective (equivalently, the injective) objects.
\end{itemize}
This model structure is stable and its associated homotopy category $\Ho(\A)$ is the stable category $\underline{\A}=\A/\Proj(\A)$ of $\A$. 
\end{prop}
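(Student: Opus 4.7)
The plan is to invoke Gillespie's extension of Hovey's correspondence (cf.\ \cite{Gil11}, Corollary 3.4) to weakly idempotent complete exact categories and then identify the resulting homotopy category with $\underline{\A}$ by elementary means. For the desired model structure we want to take $\C = \F = \A$ and $\T = \Proj(\A) = \Inj(\A)$. By Gillespie's theorem it then suffices to exhibit two complete cotorsion pairs
\[
(\C \cap \T, \F) = (\Proj(\A), \A) \qquad \text{and} \qquad (\C, \F \cap \T) = (\A, \Inj(\A))
\]
in $\A$, and to check that $\T$ is thick with respect to the ambient class of admissible short exact sequences.

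First I would verify the orthogonality relations. Since projective objects satisfy $\Ext^1_\A(P, X) = 0$ for all $X$, the pair $(\Proj(\A), \A)$ is trivially a cotorsion pair; dually for $(\A, \Inj(\A))$. Completeness of both pairs then comes directly from the assumption that $\A$ has enough projectives and enough injectives: any object $X$ fits into admissible exact sequences $K \to P \to X$ and $X \to J \to C$ with $P \in \Proj(\A)$ and $J \in \Inj(\A)$, which provide the required approximations (the kernel and cokernel automatically lie in $\A$). Thickness of $\T = \Proj(\A) = \Inj(\A)$ follows because a retract of a projective/injective object is again projective/injective, and because in any short exact sequence two out of three of the terms being projective-injective forces the third to be as well (using that short exact sequences ending or starting with a projective or injective object split). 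Uniqueness of the model structure is then part of Gillespie's correspondence: the triple $(\C, \F, \T)$ determines the three classes of cofibrations, fibrations and weak equivalences.

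For the stability claim, the constructed model structure is abelian (every object is cofibrant and fibrant) and hereditary in the sense that the class of cofibrant objects is closed under taking admissible subobjects and the class of fibrant objects under admissible quotients (both classes are all of $\A$). By the exact-category analogue of \cite{Beck14}, Corollary 1.1.15, any hereditary abelian exact model structure is stable; alternatively one reads off the suspension and loop functors directly from the Frobenius structure, as is classical.

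Finally, to identify $\Ho(\A)$ with $\underline{\A}$ I would spell out the homotopy relation on $\C_\cf = \A$. A path object for $Y$ can be constructed from an admissible monomorphism $Y \to J$ into an injective $J$: the pullback along $Y \times Y \twoheadrightarrow Y \hookrightarrow J$ produces a factorization $Y \to Y \times J \to Y \times Y$ with the left arrow a trivial cofibration and the right arrow a fibration. Unwinding the definition of a right homotopy through this path object shows that two morphisms $f, g: X \to Y$ are homotopic if and only if $f - g$ factors through the injective object $J$, i.e.\ through an object of $\Proj(\A) = \Inj(\A)$. This is precisely the equivalence relation defining $\underline{\A}$. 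Combined with the fundamental identification $\Ho(\A) \cong \A_\cf/\!\sim$ from (\ref{main_thm_model_cat}), this yields $\Ho(\A) \cong \underline{\A}$. The main technical point I expect to handle carefully is the construction of functorial factorizations without full bicompleteness, which is precisely what Gillespie's framework supplies once the cotorsion pairs are in hand.
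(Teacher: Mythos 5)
Your proposal follows essentially the same route as the paper: both invoke Gillespie's extension of Hovey's correspondence (\cite{Gil11}, Corollary 3.4) applied to the complete cotorsion pairs $(\Proj(\A),\A)$ and $(\A,\Inj(\A))$ together with the thickness of $\T=\Proj(\A)=\Inj(\A)$, the paper merely outsourcing the identification $\Ho(\A)\cong\underline{\A}$ to \cite{Gil11}, Proposition 4.3 (5), and the stability to Happel's classical result, where you argue by hand. Your hand-made path object is slightly garbled as written (from an admissible mono $Y\to J$ alone one does not get an admissible epi $Y\times J\to Y\times Y$; in a Frobenius category one should instead use an admissible epi $q:Q\to Y$ from a projective and set $p(y,x)=(y,y+q(x))$, exactly as in the paper's Lemma \ref{cylinder_obj} (ii)), but this is a standard fix and the resulting homotopy relation is, as you say, ``$f-g$ factors through a projective-injective''.
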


\begin{proof}
Set $\T=\Proj(\A)=\Inj(\A)$. By definition of a Frobenius category $(\A,\T)$ and $(\T,\A)$ are complete cotorsion pairs on $\A$ in the sense of \cite{Gil11}, \S2.1. Since $\T$ is a thick subcategory of $\A$ in the sense of \cite{Gil11}, Definition 3.2, the existence of the model structure follows from \cite{Gil11}, Corollary 3.4. The description of the corresponding homotopy category is given in \cite{Gil11}, Proposition 4.3 (5). That the suspension functor on $\underline{\A}$ is an equivalence of categories is proved in \cite{Hap}, Proposition 2.2. 
\end{proof}

If $\A$ is a (weakly idempotent complete) exact category then so is $\Ch(\A)$ with respect to those sequences of complexes which are admissible in every degree (cf.\ \cite{Gil16}, Lemma 2.5). The {\it acyclicity} of complexes in $\Ch(\A)$ is defined as in \cite{Bue10}, Definition 10.1. Moreover, if $\A$ is a Frobenius category then so is $\Ch(\A)$ (cf.\ \cite{Gil16}, Corollary 2.7).\\

Our main case of interest concerns the category $\A=\mathrm{GProj}(S)$ where $S$ is a Gorenstein ring. Viewed as a full subcategory of $\Mod(S)$ with the induced exact structure this is a Frobenius category by \cite{Gil11}, Proposition 5.2 (4). It is weakly idempotent complete because the class of Gorenstein projective modules is closed under retracts. In this situation we also have the following variant of Theorem \ref{Becker}.

\begin{prop}\label{sing_proj_Frobenius}
If $S$ is a Gorenstein ring and if $\A=\mathrm{GProj}(S)$ then there is an exact model structure on $\Ch(\A)$ for which
\begin{itemize}
\item the cofibrant objects are the acyclic complexes of projective $S$-modules,
\item all objects are fibrant,
\item the trivial objects are the complexes $Y\in\Ch(\A)$ such that we have $\Ext^1_{\Ch(\A)}(X,Y)=0$ for any acyclic complex of projectives $X$.
\end{itemize}
The adjunction $Q_0:\Ch(\A)\leftrightarrows\A:\iota_0$ is a Quillen equivalence.
\end{prop}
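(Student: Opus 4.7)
The plan is to construct the model structure via Gillespie's exact category version of Hovey's correspondence (\cite{Gil11}, Corollary 3.4) and then establish the Quillen equivalence by analysing the derived adjunction. The exact structure on $\Ch(\A)$ consists of the termwise admissible short exact sequences (cf.\ \cite{Gil16}, Lemma 2.5), and $\Ch(\A)$ inherits weak idempotent completeness from $\A$. Let $\C$ denote the class of acyclic complexes of projective $S$-modules, which lies inside $\Ch(\A)$ because $S$ being Gorenstein forces all cycles of such complexes to be Gorenstein projective. Let $\T$ denote the class of projective complexes, i.e.\ split acyclic termwise projective complexes, which are precisely the projective objects of the exact category $\Ch(\A)$, and set $\F = \Ch(\A)$.

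The cotorsion pair $(\T, \F)$ is automatic, since $\T$ consists of projective objects and $\Ch(\A)$ has enough projectives. For the cotorsion pair $(\C, \T)$ the containment $\T \subseteq \C^\perp$ is clear. The remaining identifications together with completeness -- namely the existence for every $Y \in \Ch(\A)$ of short exact sequences $0 \to T \to X \to Y \to 0$ and $0 \to Y \to T' \to X' \to 0$ with $X, X' \in \C$ and $T, T' \in \T$ -- would follow by adapting the approximation constructions in the proof of Theorem \ref{Becker} (i). The Gorenstein hypothesis on $S$ plays a crucial role here: the module-level Gorenstein projective approximations furnished by Theorem \ref{GP_R_Mod}, together with the closure of $\mathrm{GProj}(S)$ under kernels of admissible epimorphisms, ensure that the complexes appearing in the approximations stay inside $\Ch(\A)$. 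Thickness of $\T$ is standard for the class of projective objects in an exact category, and Gillespie's correspondence then yields the desired exact model structure.

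For the Quillen equivalence, the right adjoint $\iota_0$ is clearly right Quillen relative to the Frobenius model structure on $\A$ from Proposition \ref{Frobenius_model_structure}: it preserves admissible epimorphisms and those with projective kernel. That $Q_0$ is left Quillen reduces to showing that $Q_0$ sends cofibrations in $\Ch(\A)$ to admissible monomorphisms in $\A$. For an admissible short exact sequence $0 \to X' \to X \to Z \to 0$ with $Z \in \C$ the long exact homology sequence collapses, because $Z$ is acyclic, to an isomorphism $H_0(X') \cong H_0(X)$; this forces $Q_0 X' \to Q_0 X$ to be injective with cokernel $Q_0 Z \cong \ker(d_{-1}^Z) \in \mathrm{GProj}(S)$. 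The same argument handles trivial cofibrations. By \cite{HovBook}, Proposition 1.3.13, the Quillen equivalence then reduces to checking that for cofibrant $X \in \C$ and arbitrary $M \in \A$, a morphism $Q_0 X \to M$ is a weak equivalence in $\A$ if and only if its adjoint $X \to \iota_0 M$ is a weak equivalence in $\Ch(\A)$. This would be handled by the same passage to cycles in degree zero as in the proof of Theorem \ref{Becker} (ii), both homotopy categories being identified with $\underline{\A} = \mathrm{GProj}(S)/\Proj(S)$.

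The main obstacle is the completeness of the cotorsion pair $(\C, \T)$ inside $\Ch(\A)$ rather than inside the larger category $\Ch(\Mod(S))$. Becker's construction in $\Ch(\Mod(S))$ yields candidate approximations, but one has to track carefully that each complex appearing in the construction remains in $\Ch(\A)$, invoking both Theorem \ref{GP_R_Mod} and the hereditary behaviour of $\mathrm{GProj}(S) \subseteq \Mod(S)$ to control the successive syzygies.
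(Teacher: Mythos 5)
Your overall architecture (Gillespie's exact version of Hovey's correspondence applied to two cotorsion pairs on $\Ch(\A)$, then a comparison with Becker's structure on $\Ch(S)$) matches the paper, but the second cotorsion pair is misidentified and this is fatal to the argument as written. For Gillespie's Corollary 3.4 the trivial class $\T$ must satisfy $\T\cap\F=\C^\perp$, i.e.\ (all objects being fibrant) $\T=\C^\perp$ computed inside $\Ch(\A)$. This class is strictly larger than the class of projective complexes: for any $M\in\mathrm{GProj}(S)$ and any $n$ one has $\Ext^1_{\Ch(S)}(X,D^nM)\cong\Ext^1_S(X_{n-1},M)=0$ for every termwise projective $X$, so every disk complex $D^nM$ lies in $\C^\perp$, whereas it is a projective complex only when $M$ is projective. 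Consequently $(\C,\T)$ with $\T$ the projective complexes is \emph{not} a cotorsion pair (note also that ${}^\perp\T=\Ch(\A)$ because $\Ch(\A)$ is Frobenius, so ${}^\perp\T=\C$ fails as well), and the ``remaining identifications'' you defer to an adaptation of Becker's construction are false as stated. The correct bookkeeping, and what the paper's proof actually does, is to take the trivial class to be $\C^\perp$ and to feed the pairs $(\C,\C^\perp)$ and $(\C\cap\C^\perp,\Ch(\A))$ into \cite{Gil11}, Corollary 3.4; the projective complexes are only the \emph{trivially cofibrant} objects $\C\cap\C^\perp$, identified using that $\Ext^1_{\Ch(\A)}=\Ext^1_{\Ch(S)}$ on $\Ch(\A)$ (closure under extensions) together with Theorem \ref{Becker} (i). The bullet point in the statement is admittedly worded loosely, but a proof cannot take it at face value.

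The step you correctly single out as the main obstacle --- completeness of the first cotorsion pair inside $\Ch(\A)$ --- is also where the paper's method genuinely differs from what you propose. The paper does not adapt Becker's approximations; it argues via deconstructibility: $(\mathrm{GProj}(S),\mathcal{B})$ is a cotorsion pair generated by a set (\cite{Hov2}, Theorem 8.3), so $\mathrm{GProj}(S)$ is deconstructible (\cite{Beck14}, Proposition 1.2.5), hence the acyclic complexes in $\Ch(\A)$ form a deconstructible class (\cite{St14}, Lemma 7.9), and \cite{St14}, Lemma 7.10, applied to the cotorsion pair $(\Proj(S),\A)$ of the Frobenius category $\A$ produces the complete cotorsion pair $(\C,\C^\perp)$ in $\Ch(\A)$. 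Your alternative --- restricting Becker's approximation sequences from $\Ch(S)$ and checking termwise that kernels and extensions stay Gorenstein projective (which works over a Gorenstein ring, since syzygies and extensions of Gorenstein projectives are Gorenstein projective) --- could be carried out, but only for the pair $(\C,\C^\perp)$, not for your $(\C,\T)$. Finally, your treatment of the last assertion does not typecheck: $Q_0=\coker(d_1)$ does not take values in $\A$ on all of $\Ch(\A)$, since the cokernel of a map of Gorenstein projectives need not be Gorenstein projective, so one cannot verify that $Q_0$ is left Quillen in the literal sense. The proposition only claims the induced equivalence of homotopy categories, and the paper gets this for free from Theorem \ref{Becker} (ii) because the new model structure on $\Ch(\A)$ has the same cofibrant-fibrant objects (the acyclic complexes of projectives) and the same homotopy relation as the singular projective model structure on $\Ch(S)$.
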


\begin{proof}
Let $\mathcal{B}$ denote the class of $S$-modules of finite projective dimension. By \cite{Hov2}, Theorem 8.3, $(\A,\mathcal{B})$ is a functorially complete cotorsion pair on $\Mod(S)$ which is generated by a set. Moreover, $\A$ generates the category $\Mod(S)$ because it contains the projective generator $S$. It follows from \cite{Beck14}, Proposition 1.2.5, that the class $\A$ is {\it deconstructible} in the sense of \cite{St14}, Definition 3.9. By \cite{St14}, Lemma 7.9, the class of acyclic complexes in $\Ch(\A)$ is deconstructible in $\Ch(\A)$. We now apply \cite{St14}, Lemma 7.10, to the complete cotorsion pair $(\mathrm{Proj}(S),\A)$ of $\A$ (cf.\ Proposition \ref{Frobenius_model_structure}). Letting $\C$ denote the class of acyclic complexes of projective $S$-modules and
\[
\C^\perp=\{Y\in\Ch(\A)\;|\;\Ext^1_{\Ch(\A)}(X,Y)=0\mbox{ for all }X\in\ \C\}
\]
we obtain that $(\C,\C^\perp)$ is a complete cotorsion pair in $\Ch(\A)$. Note that $\Ext^1_{\Ch(\A)}=\Ext^1_{\Ch(S)}$ as a bifunctor on $\Ch(\A)$ because the exact subcategory $\Ch(\A)$ of $\Ch(S)$ is closed under extensions. Therefore, it follows from Theorem \ref{Becker} (i) that $\C\cap\C^\perp$ is the class of projective objects of $\Ch(S)$. Note that this is contained in $\Ch(\A)$. Applying \cite{Gil11}, Corollary 3.4, to the triple $(\C,\C^\perp,\Ch(\A))$ yields the required exact model structure. As a consequence of Theorem \ref{Becker} (i) and Theorem \ref{cotorsion_pairs} (i) the corresponding cotorsion pairs are functorially complete. The final assertion follows from Theorem \ref{Becker} (ii) because $\Ch(\A)$ and $\Ch(S)$ have the same class of cofibrant-fibrant objects.
\end{proof}

We now return to the situation where $S=H=\End_G(\XX)^\op$. For the rest of this section we assume that $H$ is Gorenstein and endow the categories $\Mod(H)$ and $\Ch(H)$ with the Gorenstein projective and the singular projective model structure, respectively (cf.\ Theorem \ref{GP_R_Mod} and Theorem \ref{Becker} (i)). Moreover, we endow the categories $\Rep^\infty_k(G)$ and $\Ch(G)$ with the model structures obtained via the right transfer along the adjoint pair $(F,U)$ (cf.\ Proposition \ref{SP_right_transfer} and Proposition \ref{GP_right_transfer}).

\begin{defin}\label{Cabanes_category}
A representation $V\in\Rep_k^\infty(G)$ is called {\it $I$-Gorenstein projective} if $V$ is isomorphic to $B_0X=\im(d_1)$ for some $I$-exact sequence $X=(X_\bullet,d_\bullet)$ of $I$-projective $G$-representations. We denote by $\C(G)$ the full subcategory of $\Rep^\infty_k(G)$ consisting of all $I$-Gorenstein projective $G$-representations.
\end{defin}

It follows from Lemma \ref{ISP_cofibrant} and Corollary \ref{SP_fib_cofib} that a representation $V$ is $I$-Gorenstein projective if and only if there is an acyclic complex $Y=(Y_\bullet,d_\bullet)$ of projective $H$-modules such that $V\cong B_0FY=\im(Fd_1)$.\\

Using the functorial factorizations in the model category $\Mod(H)$ one can construct functorial complete resolutions as in \cite{Gil21}, \S1. The outcome is a functor $Y^{(\cdot)}=(M\mapsto Y^M=(Y^M_\bullet,d^M_\bullet)):\Mod(H)\to\Ch(H)$ such that 
\begin{itemize}
\item the complex $Y^M\in\Ch(H)$ is acyclic with $B_0Y^M=Z_0Y^M=M$,
\item for any $i\geq 1$ the $H$-module $Y^M_i$ is projective,
\item for any $i\leq 0$ the $H$-module $Y^M_i$ has finite projective dimension.
\end{itemize}
Note that our numbering of the complex $Y^M$ differs from that in \cite{Gil21} by a shift. An $H$-module $M$ is Gorenstein projective if and only if $Y^M_i$ is a projective $H$-module for all $i\in\mathbb{Z}$ (cf.\ \cite{Gil21}, Lemma 4.3 (4)). We now define the functor $\FF:\Mod(H)\to\Rep^\infty_k(G)$ as the composition
\[
\Mod(H)\stackrel{Y^{(\cdot)}}{\longrightarrow}\Ch(H)\stackrel{F}{\longrightarrow}\Ch(G)\stackrel{B_0}{\longrightarrow}\Rep^\infty_k(G),
\]
sending $M\in\Mod(H)$ to $\FF M=B_0FY^M\in\Rep^\infty_k(G)$. 

\begin{thm}\label{Cabanes}
The functors
\[
U:\Rep^\infty_k(G)\to\Mod(H)\quad\mbox{and}\quad\FF:\Mod(H)\to\Rep^\infty_k(G)
\]
restrict to inverse equivalences of categories $\C(G)\cong\mathrm{GProj}(H)$.
\end{thm}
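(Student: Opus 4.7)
The plan is to show that $U$ and $\FF$ are mutually inverse equivalences between $\C(G)$ and $\mathrm{GProj}(H)$. The first step is to check that the two functors land in the right subcategories and that one of the two natural isomorphisms is essentially built in. For $V \in \C(G)$, Corollary \ref{SP_fib_cofib} lets us realize $V \cong B_0 FY$ for some acyclic complex $Y$ of projective $H$-modules; Proposition \ref{GP_split_mono}(ii) then yields a natural isomorphism $UV \cong B_0 Y$, which is Gorenstein projective by definition. Dually, for $M \in \mathrm{GProj}(H)$ the functorial complete resolution $Y^M$ is an acyclic complex of projective $H$-modules with $B_0 Y^M = M$, so $\FF M = B_0 F Y^M$ lies in $\C(G)$ by Definition \ref{Cabanes_category}. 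Applying Proposition \ref{GP_split_mono}(ii) once more with $Y = Y^M$, the canonical map $M = B_0 Y^M \to U B_0 F Y^M = U\FF M$ is a natural isomorphism, with naturality inherited from the functoriality of $Y^{(\cdot)}$.

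The heart of the argument is the natural isomorphism $\FF U \xrightarrow{\cong} \id_{\C(G)}$, which I would deduce from the claim that $U: \C(G) \to \mathrm{GProj}(H)$ is fully faithful. Faithfulness is immediate: writing $V = B_0 FY$, the $I$-free representation $FY_1$ surjects onto $V$ via $Fd_1$, and Lemma \ref{summand_I_proj} shows that $FY_1$ is generated by its $I$-invariants; hence so is $V$, and any $G$-equivariant map out of $V$ is determined by its restriction to $V^I = UV$. For fullness, given $V = B_0 FY$, $V' = B_0 FY'$ in $\C(G)$ and a morphism $g: UV \to UV'$, I would invoke the standard comparison theorem for complete projective resolutions of Gorenstein projective modules to lift $g$ (via the identifications $UV \cong B_0 Y$, $UV' \cong B_0 Y'$) to a chain map $\phi: Y \to Y'$ inducing $g$ on $B_0$. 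Applying the additive functor $F$ and restricting yields $f := B_0 F\phi: V \to V'$, and a short diagram chase using the naturality of $B_0 Y \to UB_0 FY$ from Proposition \ref{GP_split_mono}(ii) confirms $Uf = g$. Combined with essential surjectivity (already contained in $M \cong U\FF M$), this makes $U: \C(G) \to \mathrm{GProj}(H)$ an equivalence with quasi-inverse $\FF$, and the natural isomorphism $\FF U \cong \id_{\C(G)}$ follows formally.

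The main obstacle is the fullness step: the lift $\phi$ is unique only up to chain homotopy, so a priori $B_0 F\phi$ may depend on the choice of lift. This ambiguity is resolved by faithfulness, which forces the resulting $f$ to be independent of $\phi$, but one still has to verify that at least one such lift exists and that its image under $B_0 F$ really restricts to $g$ on $I$-invariants via the identifications set up earlier. The comparison theorem itself, while classical, should be invoked here in the specific form needed for the complete resolutions realizing $V$ and $V'$ as in the statement of Definition \ref{Cabanes_category}.
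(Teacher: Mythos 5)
Your proposal is correct and follows essentially the same route as the paper: both arguments rest on Proposition \ref{GP_split_mono} (ii) for the natural isomorphism $U\FF\cong\id$ and the fact that $UV$ is Gorenstein projective, on generation by $I$-invariants for the faithfulness of $U$, and on a comparison-theorem lift between acyclic complexes of projective $H$-modules (which over the Gorenstein ring $H$ have Gorenstein projective cycles, so the lift over the right half of the complex exists). The only organizational difference is that you apply the comparison theorem to an arbitrary morphism $g$ to prove fullness of $U$ directly, whereas the paper applies it only to $\id_{B_0Y}$, comparing $Y$ with the canonical complete resolution $Y^{B_0Y}$, to show that $\FF$ is essentially surjective onto $\C(G)$ and then deduces fullness formally from $U\FF\cong\id$.
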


\begin{proof}
If $M$ is Gorenstein projective then $\FF M$ is an object of $\C(G)$ because $Y^M$ is an acyclic complex of projective $H$-modules (cf.\ \cite{Gil21}, Lemma 4.3 (4)). Conversely, if $V\in\C(G)$ then $V\cong B_0FY$ for some acyclic complex $Y$ of projective $H$-modules by Corollary \ref{SP_fib_cofib}. It then follows from Proposition \ref{GP_split_mono} (ii) and Remark \ref{complete_resolution} that $UV\cong UB_0FY\cong B_0Y$ is a Gorenstein projective $H$-module and that $U\FF$ is isomorphic to the identity functor on $\mathrm{GProj}(H)$. In particular, the functor $U:\C(G)\to\mathrm{GProj}(H)$ is essentially surjective and the functor $\FF:\mathrm{GProj}(H)\to \C(G)$ is faithful. However, $U$ on $\C(G)$ is faithful, too. This follows from the fact that the objects of $\C(G)$ are quotients of $I$-projective $G$-representations, hence are generated by their $I$-invariants (cf.\ Lemma \ref{summand_I_proj}).\\

It then follows from $U\FF\cong\id$ that $U$ is fully faithful on the essential image of $\FF:\mathrm{GProj}(H)\to\C(G)$. However, this functor is essentially surjective. Indeed, if $V\in\C(G)$ we may assume $V=B_0FY$ for some acyclic complex $Y$ of projective $H$-modules. Set $M=B_0Y$. By Remark \ref{complete_resolution} and \cite{Gil21}, Theorem 4.1, there is a map $f:Y\to Y^M$ of complexes such that $B_0f$ is the identity on $M$. We claim that the map $B_0Ff:V=B_0FY\to B_0FY^M=\FF M$ is an isomorphism in $\Rep^\infty_k(G)$. Since $U$ reflects monomorphisms and since $\FF M$ is generated by its $I$-invariants it suffices to see that $UB_0Ff$ is an isomorphism in $\Mod(H)$. This follows from Proposition \ref{GP_split_mono} (ii) because under the isomorphisms $UB_0FY\cong B_0Y=M\cong U\FF M$ the map $UB_0Ff$ is the identity on $M$.
\end{proof}

We have the following alternative characterization of the objects of $\C(G)$.

\begin{prop}\label{characterization_C(G)}
For any object $V\in\Rep^\infty_k(G)$ the following are equivalent.
\begin{itemize}
\item[(i)] We have $V\in\C(G)$.
\item[(ii)] The representation $V$ is generated by its $I$-invariants and there is a map $i:V\to W$ in $\Rep^\infty_k(G)$ such that $W$ is $I$-projective and $Ui$ is a cofibration in $\Mod(H)$.
\end{itemize}
In particular, any object $V$ of $\C(G)$ is both a quotient and a subobject of an $I$-projective $G$-representation and $UV$ is a Gorenstein projective $H$-module.
\end{prop}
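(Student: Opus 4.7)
The plan is to establish the two implications separately and then read off the ``in particular'' statement from the first direction.

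For (i) $\Rightarrow$ (ii), I would invoke Theorem \ref{Cabanes} to realize $V\cong\FF(UV)$ where $M:=UV$ is Gorenstein projective, so without loss of generality $V=B_0FY^M$ for a complete projective resolution $Y^M$ of $M$. Since $M$ is Gorenstein projective every term $Y^M_n$ is projective (cf.\ \cite{Gil21}, Lemma 4.3 (4)), hence $W:=FY^M_0$ is $I$-projective. Let $i:V\hookrightarrow W$ denote the inclusion $B_0FY^M\hookrightarrow FY^M_0$. Proposition \ref{GP_split_mono}(ii) identifies $UV\cong B_0Y^M=M$ while Lemma \ref{unit_proj} identifies $UW\cong Y^M_0$, and under these identifications $Ui$ becomes the inclusion $M\hookrightarrow Y^M_0$ of cycles, whose cokernel $B_{-1}Y^M$ is Gorenstein projective. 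By Theorem \ref{GP_R_Mod}, $Ui$ is a cofibration. Finally, $V$ is a quotient of the $I$-projective $FY^M_1$, so is generated by its $I$-invariants.

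For the converse, set $M:=UV$. The cofibration $Ui:M\hookrightarrow UW$ has Gorenstein projective cokernel $C$, and $UW$ is projective by Lemma \ref{summand_I_proj}, so in particular $M$ is Gorenstein projective. I would build a complete projective resolution $Y$ of $M$ whose term $Y_0$ is $UW$: the negative part begins $0\to M\to UW\to C\to 0$ and extends by iteratively embedding $C$ and its higher syzygies into projectives (possible because $C$ is Gorenstein projective), while the positive part is an arbitrary projective resolution of $M$; acyclicity after applying $\Hom_H(-,P)$ for a projective $P$ is automatic from the vanishing of $\Ext^i_H(\text{G-proj},P)$. By definition $\FF M=B_0FY\subseteq FY_0=FUW$. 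Since $W$ is $I$-projective the counit $\varepsilon_W:FUW\to W$ is an isomorphism. Because the differential $d_1^Y:Y_1\to Y_0=UW$ factors through $M=UV\hookrightarrow UW$, naturality of $\varepsilon$ writes $\varepsilon_W\circ Fd_1^Y$ as the composite
\[
FY_1\twoheadrightarrow FUV\stackrel{\varepsilon_V}{\longrightarrow}V\stackrel{i}{\hookrightarrow}W.
\]
The hypothesis that $V$ is generated by its $I$-invariants makes $\varepsilon_V$ surjective, so the image of this composite is $i(V)$. Since $\varepsilon_W$ is an isomorphism it therefore restricts to an isomorphism $\FF M\stackrel{\cong}{\to}V$ inside $W$, and hence $V\in\C(G)$.

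The concluding ``in particular'' statement is then immediate: writing $V\cong B_0FY$ for an $I$-exact sequence $Y$ of $I$-projectives, $V$ embeds in the $I$-projective $FY_0$ and is a quotient of the $I$-projective $FY_1$, while $UV\cong B_0Y$ is Gorenstein projective by Proposition \ref{GP_split_mono}(ii) (consistent with Theorem \ref{Cabanes}).

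The main obstacle I expect lies in the second implication, and specifically in showing that $\varepsilon_W$ sends $\FF M$ \emph{onto} $i(V)$ rather than onto some strict subrepresentation generated by $V^I$. This is exactly where the assumption that $V$ is generated by its $I$-invariants enters in an essential way; without it one obtains only the subrepresentation $k[G]\cdot UV$ of $V$. The complementary construction of a complete projective resolution with the prescribed first term $Y_0=UW$ is standard Gorenstein homological algebra, but requires a careful splice of the short exact sequence $0\to M\to UW\to C\to 0$ with resolutions of $M$ and of the syzygies of $C$.
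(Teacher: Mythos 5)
Your proof is correct and follows essentially the same route as the paper: for (i)$\Rightarrow$(ii) one realizes $V\cong B_0FY$ for an acyclic complex $Y$ of projectives and identifies $\coker(Ui)$ with a Gorenstein projective boundary module of $Y$, while for (ii)$\Rightarrow$(i) one extends $Ui$ to an acyclic complex of projectives by iteratively embedding the Gorenstein projective cokernels into projectives, using that $V$ is generated by its $I$-invariants to recover all of $V$ (rather than just $k[G]\cdot V^I$) as $B_0$ of the resulting $I$-exact complex of $I$-projectives. The only cosmetic difference is that the paper completes the positive half of the complex by splicing with an $I$-resolution of $V$ on the representation side, whereas you apply $F$ to a full complete projective resolution of $UV$ and then use naturality of the counit.
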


\begin{proof}
That any object $V\in\C(G)$ is both a quotient and a subobject of an $I$-projective $G$-representation is true by definition. Moreover, $UV$ is Gorenstein projective by the proof of Theorem \ref{Cabanes}. Therefore, we only need to show that (i) and (ii) are equivalent. Assume that $V$ satisfies (i). By definition $V\cong B_0FY$ for some acyclic complex of projectives $Y\in\Ch(H)$. This gives an embedding $i:V\to FY_0$ where $FY_0$ is $I$-projective. Moreover, $V$ is generated by its $I$-invariants as seen in the proof of Theorem \ref{Cabanes}. Under the isomorphism $UV\cong UB_0FY\cong B_0Y$ in Proposition \ref{GP_split_mono} (ii) the $H$-module $\coker(Ui)$ gets identified with $B_1Y$. Thus, $\coker(Ui)$ is Gorenstein projective and $Ui$ is a cofibration. This shows (ii). Conversely, assume that $V$ satisfies (ii). Since $Ui$ is a cofibration we have $\coker(Ui)\in\mathrm{GProj}(H)$. By the functorial factorisations in $\Mod(H)$ we can embed $\coker(Ui)$ into a projective $H$-module via a cofibration. This way we inductively construct an acyclic complex
\[
0\longrightarrow UV\stackrel{Ui}{\longrightarrow} UW=Y_0\stackrel{d_0}{\longrightarrow}  Y_{-1}\stackrel{d_{-1}}{\longrightarrow}  Y_{-2}\stackrel{d_{-2}}{\longrightarrow} \ldots
\]
where $Y_i$ is a projective $H$-module for all $i\leq 0$. If we set $X_i=FY_i$ then Lemma \ref{unit_proj} and Lemma \ref{summand_I_proj} allow us to identify this complex with the $I$-invariants of the complex
\[
0\longrightarrow V\stackrel{i}{\longrightarrow} W\cong FUW=X_0\stackrel{Fd_0}{\longrightarrow}  X_{-1}\stackrel{Fd_{-1}}{\longrightarrow}  X_{-2}\stackrel{Fd_{-2}}{\longrightarrow} \ldots
\]
which is therefore $I$-exact. Composing it with an $I$-resolution of $V$ gives an $I$-exact complex $X$ of $I$-projective $G$-representations with $V=B_0X$. This shows $V\in\C(G)$.
\end{proof}

\begin{rem}\label{Cabanes_finite_reductive}
Assume that $H$ comes from a finite group $G$ with a split $BN$-pair of characteristic $p$ as in Example \ref{H_Gorenstein} (i). Then $H$ is selfinjective and $\mathrm{GProj}(H)=\Mod(H)$. Moreover, any monomorphism in $\Mod(H)$ is a cofibration. It follows from Proposition \ref{characterization_C(G)} that $\C(G)$ is the full subcategory of representations which are both a quotient and a subobject of an $I$-projective $G$-representation. In this case the finite dimensional objects of $\C(G)$ were first studied by Cabanes (cf.\ \cite{Cab}, \S1). Moreover, Theorem \ref{Cabanes} recovers \cite{Cab}, Theorem 2. Our arguments differ only gradually. Everything relies on the fact that the adjunction $(F,U)$ induces an equivalence between projective $H$-modules and $I$-projective $G$-representations (cf.\ Corollary \ref{proj_equiv}).
\end{rem}

Proposition \ref{characterization_C(G)} yields the following class of cofibrations in $\Rep^\infty_k(G)$.

\begin{lem}\label{CG_cofibration} If $V\in\Rep^\infty_k(G)$ is generated by its $I$-invariants then any map $i:V\to W$ as in Proposition \ref{characterization_C(G)} (ii) is a cofibration.
\end{lem}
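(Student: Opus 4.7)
The plan is to verify the left lifting property directly. Let $p:V'\to V''$ be a trivial fibration in $\Rep^\infty_k(G)$ and consider a commutative square
\[
\begin{tikzcd}
V\arrow{r}{f}\arrow[swap]{d}{i} & V'\arrow{d}{p}\\
W\arrow[swap]{r}{g} & V''.
\end{tikzcd}
\]
Applying $U$ and using the definition of the right transfer yields a commutative square in $\Mod(H)$ with $Ui$ a cofibration (by hypothesis) and $Up$ a trivial fibration; the lifting axiom for the Gorenstein projective model structure on $\Mod(H)$ then produces an $H$-linear map $l:UW\to UV'$ satisfying $l\circ Ui=Uf$ and $Up\circ l=Ug$.

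Next I would promote $l$ to a $G$-equivariant map $\tilde l:W\to V'$ using the adjunction. Since $W$ is $I$-projective the counit $\varepsilon_W:FUW\to W$ is an isomorphism (Lemma \ref{summand_I_proj}), and since $UW$ is then projective, the unit $\eta_{UW}$ is an isomorphism (Lemma \ref{unit_proj}); the triangle identity $U\varepsilon_W\circ\eta_{UW}=\id_{UW}$ forces $U\varepsilon_W=\eta_{UW}^{-1}$. Taking $\tilde l$ to be the composition of $\varepsilon_W^{-1}$ with the map $FUW\to V'$ adjoint to $l$, one reads off $U\tilde l=l$.

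To finish, I would check that the two triangles commute. The key observation is that a $G$-equivariant map out of a representation generated by its $I$-invariants is uniquely determined by its restriction to $I$-invariants: if $V=k[G]\cdot V^I$ and $\varphi,\psi:V\to V''$ agree on $V^I$, then writing $v=\sum_jc_jg_jv_j$ with $v_j\in V^I$, $c_j\in k$ and $g_j\in G$ gives $\varphi(v)=\psi(v)$ by $G$-equivariance and $k$-linearity. Both $V$ (by hypothesis) and $W$ (by $I$-projectivity and Lemma \ref{summand_I_proj}) have this property, so it suffices to verify $p\tilde l=g$ and $\tilde l\circ i=f$ after applying $U$; these identities reduce to $Up\circ l=Ug$ and $l\circ Ui=Uf$, which hold by construction.

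The main obstacle is purely bookkeeping around the various units and counits and the identification $W\cong FUW$; the substantive input, namely the existence of the lift $l$, takes place entirely inside $\Mod(H)$ and is handed to us by the model-category axioms there.
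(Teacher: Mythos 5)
Your proof is correct, and it takes a genuinely different route from the paper's. The paper factors $i$ through the counit: since $F$ is left Quillen, $FUi$ is a cofibration, and using $\varepsilon_W$ an isomorphism, $\varepsilon_V$ surjective, $U$ reflecting monomorphisms, and Lemma \ref{cofib_I_inv} (image inclusions of cofibrations are cofibrations, via a pushout), it writes $i$ as a composition of a cofibration with two isomorphisms. You instead verify the left lifting property against trivial fibrations directly: solve the lifting problem in $\Mod(H)$, transport the solution back along the adjunction using that $\varepsilon_W$ and $\eta_{UW}$ are isomorphisms, and then use the observation that a $G$-map out of a representation generated by its $I$-invariants is determined by its restriction to $I$-invariants to check the two triangles commute (this is exactly where both hypotheses --- $V$ generated by its $I$-invariants, $W$ being $I$-projective --- enter). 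Your argument is more self-contained in that it avoids Lemma \ref{cofib_I_inv} and the left Quillen property of $F$, and it does not need $i$ to be injective (which the paper's proof uses implicitly, via $U$ reflecting monomorphisms, to identify $V$ with $\im(FUi)$); the paper's argument is shorter given the lemmas already in place. Both proofs correctly reduce the substantive lifting to $\Mod(H)$ via the equivalence between projective $H$-modules and $I$-projective $G$-representations.
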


\begin{proof}
By assumption the map $Ui:UV\to UW$ is a cofibration in $\Mod(H)$. Since $F$ is left Quillen the map $FUi:FUV\to FUW$ is a cofibration in $\Rep^\infty_k(G)$. We have a commutative square
\[
\begin{tikzcd}
FUV\arrow{r}{FUi}\arrow[swap]{d}{\varepsilon_V} & FUW\arrow{d}{\varepsilon_W}\\
V\arrow[swap]{r}{i} & W
\end{tikzcd}
\]
where $\varepsilon_V$ is surjective since $V$ is generated by its $I$-invariants. Moreover, $\varepsilon_W$ is an isomorphism by Lemma \ref{summand_I_proj}. Therefore, the composition $\varepsilon_W^{-1} i$ induces an isomorphism $f:V\to \im(FUi)$. Since the inclusion $g:\im(FUi)\to FUW$ is a cofibration by Lemma \ref{cofib_I_inv} so is $i=\varepsilon_W g f$.
\end{proof}

The difference between the functors $F$  and $\FF$ can be made explicit as follows.

\begin{lem}\label{relation_F_FF}
There is a pointwise exact sequence
\[
0\longrightarrow H_1FY^{(\cdot)}\longrightarrow F\longrightarrow\FF\longrightarrow 0
\]
of functors from $\Mod(H)$ to $\Rep^\infty_k(G)$. The functor $\FF:\Mod(H)\to\Rep^\infty_k(G)$ preserves surjections and its restriction $\FF:\mathrm{GProj}(H)\to\C(G)$ preserves mono\-morphisms.
\end{lem}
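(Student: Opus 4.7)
The plan is to deduce each of the three assertions from Proposition \ref{GP_split_mono} together with the functoriality of the complete resolution $Y^{(\cdot)}$.

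For the short exact sequence of functors, I would specialize Proposition \ref{GP_split_mono}(i) to the acyclic complex $Y = Y^M$. Since $B_0 Y^M = M$ by construction of the functorial complete resolution, the proposition immediately yields an exact sequence
\[
0 \longrightarrow H_1 F Y^M \longrightarrow F M \longrightarrow \FF M \longrightarrow 0
\]
in $\Rep^\infty_k(G)$, where the right-hand term is $\FF M = B_0 F Y^M$ by definition of $\FF$. Naturality in $M$ is inherited from the functoriality of $M\mapsto Y^M$ combined with the functoriality asserted in Proposition \ref{GP_split_mono}(i).

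For the preservation of surjections, let $f:M\to N$ be a surjection in $\Mod(H)$. The pointwise exact sequence just constructed fits into a commutative square
\[
\begin{tikzcd}
FM \arrow{r}\arrow[swap]{d}{Ff} & \FF M\arrow{d}{\FF f}\\
FN \arrow{r} & \FF N
\end{tikzcd}
\]
whose horizontal arrows are surjective. Since $F=\XX\otimes_H(\cdot)$ is right exact, $Ff$ is also surjective, whence the composition $FM\to\FF N$ is surjective. As this composition factors through $\FF M$, the map $\FF f:\FF M\to\FF N$ is itself surjective.

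For the monomorphism claim, let $f:M\to N$ be a monomorphism between Gorenstein projective $H$-modules. Because $M$ and $N$ are Gorenstein projective, both $Y^M$ and $Y^N$ are termwise projective acyclic complexes, so Proposition \ref{GP_split_mono}(ii) applies to each and provides natural $H$-linear isomorphisms $M=B_0 Y^M\cong UB_0 FY^M=U\FF M$, and analogously for $N$. The functorial chain map $Y^f:Y^M\to Y^N$ induced by $f$ restricts to $f$ on $B_0$, so after applying $U$ and invoking the naturality of the canonical isomorphism $B_0 Y\to UB_0 FY$ for termwise projective acyclic complexes $Y$, the map $U\FF f$ is identified with $f$ under the above isomorphisms. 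Since $f$ is a monomorphism in $\Mod(H)$ and $U$ reflects monomorphisms, $\FF f$ is a monomorphism in $\Rep^\infty_k(G)$. The only genuine point to check is the naturality of $B_0 Y\to UB_0 FY$, but this is immediate from its construction via the unit $\eta_Y$; this minor verification is the only real obstacle in the argument.
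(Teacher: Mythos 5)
Your proposal is correct and follows essentially the same route as the paper: the exact sequence and the surjectivity claim are handled exactly as in the paper's proof via Proposition \ref{GP_split_mono}(i) and the right exactness of $F$, and for the monomorphism claim you simply inline the content of Theorem \ref{Cabanes} (namely the natural identification $U\FF\cong\id$ on $\mathrm{GProj}(H)$ coming from Proposition \ref{GP_split_mono}(ii)) together with the fact that $U$ reflects monomorphisms, which is precisely what the paper's citation of that theorem amounts to.
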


\begin{proof}
The first statement follows from Proposition \ref{GP_split_mono} (i). Since $F$ preserves surjections it follows that so does $\FF$. The final statement follows from Theorem \ref{Cabanes}. 
\end{proof}

For a suitable exact structure on $\C(G)$ the functor $\FF$ actually becomes exact.

\begin{cor}\label{C(G)_Frobenius}
With respect to the short $I$-exact sequences the category $\C(G)$ is a weakly idempotent complete Frobenius category and the functors $\FF:\mathrm{GProj}(H)\rightleftarrows\C(G):U$ are exact inverse equivalences.
\end{cor}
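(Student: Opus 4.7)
My plan is to exploit the categorical equivalence $U:\C(G)\rightleftarrows\mathrm{GProj}(H):\FF$ from Theorem \ref{Cabanes} in order to transport the standard Frobenius structure on $\mathrm{GProj}(H)$ to $\C(G)$. Recall that $\mathrm{GProj}(H)$ is a Frobenius category under the exact structure inherited from $\Mod(H)$, with projective-injective objects the projective $H$-modules (cf.\ \cite{Gil11}, Proposition 5.2). Once the exact structures on the two sides are shown to match, the Frobenius property, enough projectives and injectives, weak idempotent completeness, and the exactness of $U$ and $\FF$ all follow by formal transport.

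The first step is to identify the exact structures. By definition, a sequence $0\to V_1\to V_2\to V_3\to 0$ in $\C(G)$ is short $I$-exact precisely when $0\to UV_1\to UV_2\to UV_3\to 0$ is short exact in $\Mod(H)$, which is the same as being short exact in the exact subcategory $\mathrm{GProj}(H)$ (closed under extensions in $\Mod(H)$). Conversely, applying $\FF$ to any short exact sequence of Gorenstein projective modules produces a sequence in $\C(G)$ whose $U$-image is short exact via $U\FF\cong\id$, hence short $I$-exact by definition. This bijection upgrades the equivalence $(U,\FF)$ to an equivalence of exact categories, and $U$ and $\FF$ are automatically exact.

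The second step is to identify the projective-injective objects of $\C(G)$. Every $V\in\C(G)$ is generated by its $I$-invariants by Proposition \ref{characterization_C(G)}, so by Lemma \ref{summand_I_proj} the condition that $UV$ be projective is equivalent to $V$ being $I$-projective. Conversely, every $I$-projective $V$ lies in $\C(G)$, as witnessed by the $I$-exact complex $\cdots\to 0\to V\xrightarrow{\id}V\to 0\to\cdots$ of $I$-projectives with $V$ placed in degrees $1$ and $0$. Hence the projective-injective objects of $\C(G)$ are exactly the $I$-projective $G$-representations, and enough projectives and injectives in $\C(G)$ follow either from the corresponding statement for $\mathrm{GProj}(H)$ via the equivalence, or directly from Proposition \ref{characterization_C(G)}, which embeds each $V\in\C(G)$ into an $I$-projective with cokernel still in $\C(G)$.

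Finally, weak idempotent completeness is preserved by any equivalence of exact categories, and $\mathrm{GProj}(H)$ inherits this property from the abelian category $\Mod(H)$ since the class of Gorenstein projective modules is closed under retracts. The main (though still routine) obstacle is the first step: verifying cleanly that short $I$-exactness in $\C(G)$ matches short exactness of Gorenstein projective modules under $U$. Everything else is a formal consequence of the equivalence.
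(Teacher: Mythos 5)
Your proposal is correct and follows essentially the same route as the paper: the paper's proof simply observes (via Lemma \ref{pullback}) that short $I$-exact sequences in $\C(G)$ correspond under $U$ to short exact sequences of Gorenstein projective $H$-modules, and then transports the Frobenius structure of $\mathrm{GProj}(H)$ along the equivalence of Theorem \ref{Cabanes}. Your additional verifications (identifying the projective-injective objects of $\C(G)$ with the $I$-projectives, and noting that weak idempotent completeness transports) are correct elaborations of what the paper leaves implicit.
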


\begin{proof}
Recall from Lemma \ref{pullback} that a sequence in $\Rep^\infty_k(G)$ is $I$-exact if and only if it is a complex such that the sequence obtained by applying $U$ is exact in $\Mod(H)$. Therefore, everything follows from Theorem \ref{Cabanes}.
\end{proof}

\begin{rem}\label{warning_exactness}
As recalled earlier, in an exact category $\A$ an admissible exact sequence $X\stackrel{i}{\to}Y\stackrel{p}{\to}Z$ really is exact in the categorical sense, i.e.\ we have $i=\ker(p)$ and $p=\coker(i)$. If $0\to X\stackrel{i}{\to}Y\stackrel{p}{\to}Z\to 0$ is a short $I$-exact sequence in $\C(G)$ then the relations $i=\ker(p)$ and $p=\coker(i)$ hold in $\C(G)$ but not necessarily in the larger category $\Rep^\infty_k(G)$. In fact, in $\Rep^\infty_k(G)$ the sequence is a complex, $i$ is injective and $p$ is surjective. However, in $\Rep_k^\infty(G)$ the inclusion $\im(i)\subseteq\ker(p)$ might not be an isomorphism. In fact, $\im(i)$ is the subrepresentation of $\ker(p)$ generated by $U\ker(p)$. 
\end{rem}

Endowing $\C(G)$ with the structure of a Frobenius category as in Corollary \ref{C(G)_Frobenius} the functors $\FF$ and $U$ can be viewed as inverse equivalences of exact model categories $\mathrm{GProj}(H)\cong\C(G)$ and $\Ch(\mathrm{GProj}(H))\cong\Ch(\C(G))$. Therefore, a morphism $f:V\to W$ in $\C(G)$ is a weak equivalence if and only if $Uf$ is a weak equivalence in $\mathrm{GProj}(H)$ and hence in $\Mod(H)$. Consequently, the inclusion functor $i:\C(G)\to\Rep^\infty_k(G)$ preserves weak equivalences and $i$ admits a well-defined homotopy functor $\Ho(i):\Ho(\C(G))\to\Ho(\Rep^\infty_k(G))$. Denote by $Q:\Mod(H)\to\mathrm{GProj}(H)$ the cofibrant replacement functor. 

\begin{thm}\label{derived_right_inverse}
The composed functor
\[
\Ho(\C(G))\stackrel{\Ho(i)}{\longrightarrow}\Ho(\Rep^\infty_k(G))\stackrel{RU}{\longrightarrow}\Ho(\Mod(H))
\]
induced by the fully faithful functor $\C(G)\stackrel{i}{\to}\Rep_k^\infty(G)\stackrel{U}{\to}\Mod(H)$ is an equivalence of categories with inverse $\Ho(\FF)\Ho(Q)$. 
\end{thm}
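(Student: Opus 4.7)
The strategy is to reduce the statement to Corollary \ref{C(G)_Frobenius}, which provides inverse exact equivalences $U:\C(G)\rightleftarrows\mathrm{GProj}(H):\FF$ of weakly idempotent complete Frobenius categories. By Corollary \ref{proj_equiv} these identify the projective-injective objects on each side, namely the $I$-projective $G$-representations and the projective $H$-modules. Endowing both categories with the canonical Frobenius model structure of Proposition \ref{Frobenius_model_structure}, the equivalence $(U,\FF)$ therefore descends to inverse equivalences of the associated stable categories $\Ho(\C(G))\cong\underline{\C(G)}$ and $\Ho(\mathrm{GProj}(H))\cong\underline{\mathrm{GProj}(H)}$. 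By Remark \ref{stable_modules} the latter coincides canonically with $\Ho(\Mod(H))$ for the Gorenstein projective model structure, the identification sending a Gorenstein projective module $M$ to $M$ itself.

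I would next argue that $RU\circ\Ho(i)$ coincides with the composite equivalence $\Ho(\C(G))\xrightarrow{\sim}\underline{\mathrm{GProj}(H)}\xrightarrow{\sim}\Ho(\Mod(H))$ just constructed. Since every object of $\Rep^\infty_k(G)$ is fibrant in the $I$-Gorenstein projective model structure (the map to the terminal object $0$ is trivially an $I$-epimorphism), the right derived functor $RU$ coincides with the localization $\Ho(U)$. Hence $RU\circ\Ho(i)$ is induced by the plain functor $U\circ i:\C(G)\to\Mod(H)$, which factors as $\C(G)\xrightarrow{U}\mathrm{GProj}(H)\hookrightarrow\Mod(H)$. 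After localization this decomposes as the Frobenius equivalence from the first paragraph followed by the canonical identification $\underline{\mathrm{GProj}(H)}\cong\Ho(\Mod(H))$ of Remark \ref{stable_modules}, so $RU\circ\Ho(i)$ is itself an equivalence.

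Finally, to identify the inverse, I note that the cofibrant replacement functor $Q:\Mod(H)\to\mathrm{GProj}(H)$ comes with a natural trivial fibration $QM\to M$, so $\Ho(Q)$ is a quasi-inverse of the canonical equivalence $\underline{\mathrm{GProj}(H)}\xrightarrow{\sim}\Ho(\Mod(H))$. Composing with $\Ho(\FF):\underline{\mathrm{GProj}(H)}\xrightarrow{\sim}\Ho(\C(G))$, the Frobenius quasi-inverse of $\Ho(U)$, then yields $\Ho(\FF)\Ho(Q)$ as the required quasi-inverse of $RU\circ\Ho(i)$. The main obstacle I expect in this plan lies in the first paragraph, namely the careful verification that the exact equivalence $U:\C(G)\cong\mathrm{GProj}(H)$ matches the two homotopy relations on morphisms: two maps $f,g:V\to W$ in $\C(G)$ satisfy that $f-g$ factors through an $I$-projective if and only if $Uf-Ug$ factors through a projective $H$-module. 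This reduces to observing that $U$ sends $I$-projectives to projective $H$-modules by Lemma \ref{summand_I_proj}, while the quasi-inverse $\FF$ takes projectives to $I$-projectives because $\FF P\cong FP$ for projective $P$ (as seen from the trivial complete resolution $\cdots\to 0\to P\xrightarrow{\id}P\to 0\to\cdots$, whose only non-zero cycle in non-negative degree is $P$ itself).
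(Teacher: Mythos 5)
Your proposal is correct and follows essentially the same route as the paper: since every object of $\Rep^\infty_k(G)$ is fibrant one has $RU\cong\Ho(U)$, the composite $U\circ i$ factors as $\C(G)\to\mathrm{GProj}(H)\hookrightarrow\Mod(H)$, the first factor induces an equivalence of stable categories by Corollary \ref{C(G)_Frobenius}, and the inclusion $\mathrm{GProj}(H)\hookrightarrow\Mod(H)$ induces an equivalence with quasi-inverse $\Ho(Q)$. Your closing worry about matching the two homotopy relations is resolved exactly as you suggest, and is in any case automatic because $U$ and $\FF$ are inverse equivalences of exact model categories identifying the projective-injective objects on both sides.
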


\begin{proof}
Write $U^{(1)}$ for the functor $U:\Rep^\infty_k(G)\to\Mod(H)$ and $U^{(2)}$ for the induced functor $\C(G)\to\mathrm{GProj}(H)$. Since all objects of $\Rep^\infty_k(G)$ are fibrant the proof of \cite{HovBook}, Proposition 1.2.3, shows that the homotopy functor of the fibrant replacement functor of $\Rep^\infty_k(G)$ is isomorphic to the identity functor. Therefore, $RU^{(1)}\cong\Ho(U^{(1)})$.\\

If $j:\mathrm{GProj}(H)\to\Mod(H)$ denotes the inclusion then we have $U^{(1)}i=jU^{(2)}$ and hence $\Ho(U^{(1)})\Ho(i)=\Ho(j)\Ho(U^{(2)})$. However, the proof of \cite{HovBook}, Proposition 1.2.3, shows that $\Ho(j)$ is an equivalence of categories with inverse $\Ho(Q)$. Moreover, $\Ho(U^{(2)})$ is an equivalence of categories with inverse $\Ho(\FF)$ by Corollary \ref{C(G)_Frobenius}.
\end{proof}

In particular, Theorem \ref{derived_right_inverse} implies that the functor $RU$ (resp.\ $\Ho(i)$) admits a right (resp.\ left) inverse and that $\Ho(i)$ allows us to view $\Ho(\C(G))$ as a (not necessarily full) subcategory of $\Ho(\Rep^\infty_k(G))$. The restriction of $RU$ to this subcategory is an equivalence of categories
\[
\Ho(\C(G))\cong\Ho(\Mod(H)).
\]

\begin{rem}\label{trivial}
Due to the generality of our setup there are situations in which our results are not particularly helpful. If $G$ is discrete, for example, and if $I=1$ then $\Mod(H)=\Rep^\infty_k(G)$ and we simply study the identity functor $F=U=\id$. If $G$ is a pro-$p$ group and if $I=G$ then $H=k$ is selfinjective and $\C(G)$ is the category of trivial $G$-representations. In this case the equivalence $U:\C(G)\to\mathrm{GProj}(H)=\Mod(k)$ is given by the forgetful functor. In the situations of Example \ref{H_Gorenstein}, however, the category $\Mod(H)$ is comparatively easy to understand and yet strongly linked to $\Rep^\infty_k(G)$. Passing to $\Ho(\Mod(H))\cong\mathrm{GProj}(H)/\mathrm{Proj}(H)$ simplifies the situation further by getting rid of the objects of finite projective dimension. 
\end{rem}

Finally, we explain in which way the objects of $\C(G)$ appear in the theory of equivariant coefficient systems on Bruhat-Tits buildings as studied in \cite{Koh} and \cite{OS14}. To this aim we assume that $H$ is associated to $G=\mathbb{G}(K)$ and $I=I_C$ as in Example \ref{H_Gorenstein} (iv). Given $M\in\Mod(H)$ Ollivier and Schneider construct in \cite{OS14}, \S6.4, a functorial exact sequence of $H$-modules
\[
0\to\GP(M)_d\to\ldots\to\GP(M)_0\to M \to 0 
\]
where $d$ is the semisimple rank of $\mathbb{G}$. If $\mathbb{G}$ is semisimple then all $\GP(M)_i$ are Gorenstein projective by \cite{OS14}, Lemma 6.4. On the other hand, the second author functorially associates with $M\in\Mod(H)$ a $G$-equivariant coefficient system $\F(M)$ on the semisimple Bruhat-Tits building $\mathcal{X}$ of $G$ (cf.\ \cite{Koh}, \S3.2). The corresponding complex $\C^\mathrm{or}_c(\mathcal{X}_{(\bullet)},\F(M))$ of {\it oriented chains} is $I$-exact (cf.\ \cite{Koh}, Proposition 2.9). In fact, it is concentrated in degrees $0\leq i\leq d$ and admits a functorial isomorphism
\begin{equation}\label{iso_GP_chains}
\C^\mathrm{or}_c(\mathcal{X}_{(\bullet)},\F(M))^I\cong\GP(M)_\bullet
\end{equation}
in $\Ch(H)$ (cf.\ \cite{Koh}, Remark 3.24). In particular, there is a functorial isomorphism $H_0(\C^\mathrm{or}_c(\mathcal{X}_{(\bullet)},\F(M))^I)\cong M$ of $H$-modules (cf.\ \cite{Koh}, Theorem 3.21).

\begin{prop}\label{oriented_chains}
Assume that $\mathbb{G}$ is semisimple. For any $M\in\Mod(H)$ there is an isomorphism of complexes of smooth $k$-linear $G$-representations
\[
\C^\mathrm{or}_c(\mathcal{X}_{(\bullet)},\F(M))\cong\FF\GP(M)_\bullet.
\]
\end{prop}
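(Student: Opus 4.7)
The strategy is to apply Theorem \ref{Cabanes}, which provides the equivalence $\FF\colon\mathrm{GProj}(H)\rightleftarrows\C(G)\colon U$. The functorial isomorphism (\ref{iso_GP_chains}) gives $UV_\bullet\cong\GP(M)_\bullet$ in $\Ch(H)$, where I write $V_\bullet:=\C^\mathrm{or}_c(\mathcal{X}_{(\bullet)},\F(M))$. Since $\mathbb{G}$ is semisimple, each $\GP(M)_i$ is Gorenstein projective (by \cite{OS14}, Lemma 6.4), so $\FF$ is defined on every term of $\GP(M)_\bullet$. If each $V_i$ can be shown to lie in $\C(G)$, then applying the natural isomorphism $\FF\circ U\cong\id_{\C(G)}$ termwise and composing with (\ref{iso_GP_chains}) supplies a chain of functorial isomorphisms $V_\bullet\cong\FF UV_\bullet\cong\FF\GP(M)_\bullet$, which is the required statement.

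The task is therefore to check that $V_i\in\C(G)$ for every $0\le i\le d$. I would verify this via the criterion of Proposition \ref{characterization_C(G)}(ii): $V_i$ must be generated by its $I$-invariants and must admit a morphism $V_i\to W$ into an $I$-projective $G$-representation $W$ such that $U(V_i\to W)$ is a cofibration in $\Mod(H)$, that is, a monomorphism with Gorenstein projective cokernel. The first requirement follows from the description of $V_i$ as a direct sum of compact inductions $\ind_{G_F^\circ}^G(\F(M)_F\otimes\mathrm{or}(F))$ over representatives of $G$-orbits of $i$-facets $F$, together with the fact, built into the construction in \cite{Koh}, \S3.2, that each local coefficient $\F(M)_F$ is generated by its invariants under an appropriate pro-$p$ subgroup of the facet stabiliser.

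For the existence of the morphism $V_i\to W$, my plan is to construct it locally. Each coefficient $\F(M)_F$ is defined in \cite{Koh} as a tensor product involving $M$ with a parahoric Hecke bimodule, hence embeds into a representation of the facet stabiliser whose compact induction to $G$ is $I$-projective. Summing this embedding over $G$-orbits of $i$-facets yields the desired embedding $V_i\hookrightarrow W$ into an $I$-projective $W$. On $I$-invariants this should recover a cofibration $\GP(M)_i\hookrightarrow UW$ with $UW$ projective and Gorenstein projective cokernel, using the functorial identification (\ref{iso_GP_chains}) applied in conjunction with the parahoric decompositions of \cite{OS14}, \S6.4.

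The main obstacle is to make the comparison in the previous paragraph precise, namely to check that the $I$-invariants of the local embedding $\F(M)_F\hookrightarrow(\text{$I$-projective})$ assemble into a cofibration of $\GP(M)_i$ in the sense of the Gorenstein projective model structure on $\Mod(H)$. This comes down to a delicate matching between the parahoric Hecke modules used to define $\F(M)$ in \cite{Koh} and those appearing in Ollivier--Schneider's construction of $\GP(M)_\bullet$ in \cite{OS14}, \S6.4. Fortunately, this is essentially the same comparison already needed for (\ref{iso_GP_chains}); lifting it from $\Mod(H)$ back to $\Rep^\infty_k(G)$ presents no additional conceptual difficulty, as both constructions rely throughout on the adjunction $(F,U)$ applied relative to various parahoric subgroups.
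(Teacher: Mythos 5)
Your overall strategy coincides with the paper's: reduce the statement, via Theorem \ref{Cabanes} and the isomorphism (\ref{iso_GP_chains}), to showing that every term of the oriented chain complex lies in $\C(G)$, and verify this through the criterion of Proposition \ref{characterization_C(G)}(ii). The gap is in your final paragraph: the claim that assembling the local embeddings into a cofibration of $\GP(M)_i$ over $H$ "presents no additional conceptual difficulty" and is "essentially the same comparison already needed for (\ref{iso_GP_chains})" is not justified, and this verification is in fact the entire content of the proof. The isomorphism (\ref{iso_GP_chains}) is a statement about complexes of $H$-modules; it does not by itself produce a map $i:V_i\to W$ with $W$ $I$-projective and $\coker(Ui)$ Gorenstein projective.

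Concretely, three inputs are needed that your sketch does not supply. First, each term is a finite direct sum of representations $V_\sigma=\ind_{P_\sigma^\dagger}^G(\mathfrak{t}_\sigma M_\sigma(\epsilon_\sigma))$, and one must identify the functor $\mathfrak{t}_\sigma$ of \cite{Koh}, Theorem 3.12, with $\FF_\sigma$; this uses the selfinjectivity of the parahoric Hecke algebra $H_\sigma^\dagger$, which is where semisimplicity of $\mathbb{G}$ enters a second time (beyond making the $\GP(M)_i$ Gorenstein projective): it forces the degree-zero term $Y_0^{M_\sigma(\epsilon_\sigma)}$ of the complete resolution to be projective, yielding the local embedding $j$ into an $I$-projective $P_\sigma^\dagger$-representation. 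Second, to compute $Ui$ for the induced map one needs the compatibility $U\ind_{P_\sigma^\dagger}^G(\cdot)\cong H\otimes_{H_\sigma^\dagger}U_\sigma(\cdot)$ of \cite{Koh}, Proposition 4.17 (applicable because the pro-$p$ radical $I_\sigma$ acts trivially on $\FF_\sigma M_\sigma(\epsilon_\sigma)$), which identifies $\coker(Ui)$ with $H\otimes_{H_\sigma^\dagger}\coker(U_\sigma j)$. Third, one must know that $H\otimes_{H_\sigma^\dagger}(\cdot)$ carries Gorenstein projective $H_\sigma^\dagger$-modules to Gorenstein projective $H$-modules, which is the argument of \cite{OS14}, Lemma 6.4. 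Without these three steps your "delicate matching" remains an unproved assertion rather than a proof.
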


\begin{proof}
This will follow from Theorem \ref{Cabanes} and (\ref{iso_GP_chains}) if we can show that the complex $\C^\mathrm{or}_c(\mathcal{X}_{(\bullet)},\F(M))$ consists of objects of $\C(G)$. To see this let $\sigma$ be a facet of $\mathcal{X}$ contained in the closure of $C$ and denote by $P_\sigma^\dagger$ the stabilizer of $\sigma$ in $G$. Setting $\XX_\sigma=\ind_I^{P_\sigma^\dagger}(k)$ we have the Gorenstein ring $H_\sigma^\dagger=\End_{P_\sigma^\dagger}(\XX_\sigma)^\op$ associated to $P_\sigma^\dagger$ and $I=I_C$ as in Example \ref{H_Gorenstein} (iii). Note that we may view $H_\sigma^\dagger$ as a subalgebra of $H$. Denote by $U_\sigma$, $F_\sigma$ and $\FF_\sigma$ our usual functors corresponding to $P_\sigma^\dagger$ and $H_\sigma^\dagger$.\\

In \cite{Koh}, Theorem 3.12, the second author constructs a fully faithful functor $\mathfrak{t}_\sigma:\Mod(H_\sigma^\dagger)\to\Rep^\infty_k(P_\sigma^\dagger)$ which is right inverse to $U_\sigma$. Given $N\in\Mod(H_\sigma^\dagger)$ the canonical complete resolution $Y^N_\bullet$ provides an embedding $N\hookrightarrow Y_0^N$ where $Y_0^N$ is an $H_\sigma^\dagger$-module of finite projective dimension. By our semisimplicity assumption the ring $H_\sigma^\dagger$ is selfinjective (cf.\ \cite{OS14}, Proposition 5.5). It follows that $Y_0^N$ is a projective $H_\sigma^\dagger$-module. The arguments given in the proof of \cite{Koh}, Theorem 3.12, then show that there is a functorial $H_\sigma^\dagger$-linear isomorphism $\mathfrak{t}_\sigma N\cong\FF_\sigma N$. Thus, $\mathfrak{t}_\sigma\cong\FF_\sigma$ as functors.\\

Consider the automorphism $j_\sigma$ of the $k$-algebra $H_\sigma^\dagger$ introduced in \cite{OS14}, \S3.3.1. Via scalar restriction it induces an automorphism of the category $\Mod(H_\sigma^\dagger)$ denoted by $N\mapsto N(\epsilon_\sigma)$. For $M\in\Mod(H)$ denote by $M_\sigma\in\Mod(H_\sigma^\dagger)$ the scalar restriction of $M$ along the inclusion $H_\sigma^\dagger\hookrightarrow H$. By construction and \cite{OS14}, Lemma 3.7, any term in the oriented chain complex of $\F(M)$ is a finite direct sum of representations of the form $V_\sigma=\ind_{P_\sigma^\dagger}^G(\FF_\sigma M_\sigma(\epsilon_\sigma))$. By exactness of compact induction the embedding $j: \FF_\sigma M_\sigma(\epsilon_\sigma)\to F_\sigma Y_0^{M_\sigma(\epsilon_\sigma)}$ induces an embedding $i=\ind_{P_\sigma^\dagger}^G(j):V_\sigma\to\ind_{P_\sigma^\dagger}^G(F_\sigma Y_0^{M_\sigma(\epsilon_\sigma)})$ of $G$-representations. The latter is $I$-projective because compact induction commutes with arbitrary direct sums and satisfies $\ind_{P_\sigma^\dagger}^G(\XX_\sigma)\cong \XX$. Moreover, the $P_\sigma^\dagger$-represen\-tation $\FF_\sigma M_\sigma(\epsilon_\sigma)$ is generated by its $I$-invariants (cf.\ Proposition \ref{characterization_C(G)}) and the $G$-representation $V_\sigma$ is generated by its $P_\sigma^\dagger$-subrepresentation $\FF_\sigma M_\sigma(\epsilon_\sigma)$. It follows that the $G$-representation $V_\sigma$ is generated by its $I$-invariants. By Proposition \ref{characterization_C(G)} (ii) we are left to show that $Ui$ is a cofibration.\\

Since $\FF_\sigma M_\sigma(\epsilon_\sigma)$ embeds into an $I$-free $P_\sigma^\dagger$-representation the pro-$p$ radical $I_\sigma$ of the parahoric subgroup $P_\sigma$ acts trivially on $\FF_\sigma M_\sigma(\epsilon_\sigma)$. By \cite{Koh}, Proposition 4.17, there is a natural $H$-linear isomorphism $H\otimes_{H_\sigma^\dagger}U_\sigma\FF_\sigma M_\sigma(\epsilon_\sigma)\cong UV_\sigma$ inducing an isomorphism $\coker(Ui)\cong H\otimes_{H_\sigma^\dagger}\coker(U_\sigma j)$. Since $H_\sigma^\dagger$ is selfinjective the $H_\sigma^\dagger$-module $\coker(U_\sigma j)$ is Gorenstein projective. As in \cite{OS14}, Lemma 6.4, it follows that $\coker(Ui)$ is Gorenstein projective over $H$.
\end{proof}

\begin{rem}\label{FF_compact_ind}
Assume that $\mathbb{G}$ is semisimple and take up the notation from the proof of Proposition \ref{oriented_chains}. Let $N$ be any $H_\sigma^\dagger$-module and let $M=H\otimes_{H_\sigma^\dagger}N$. Then $N$ is Gorenstein projective over $H_\sigma^\dagger$ and $M$ is Gorenstein projective over $H$. We have seen that $\ind_{P_\sigma^\dagger}^G(\FF_\sigma N)$ is an object of $\C(G)$. Moreover, $\FF M$ is an object of $\C(G)$ by Theorem \ref{Cabanes}. By Theorem \ref{Cabanes} and the proof of Proposition \ref{oriented_chains} there are canonical $H$-linear isomorphisms
\[
U\FF (H\otimes_{H_\sigma^\dagger}N)\cong H\otimes_{H_\sigma^\dagger}N\cong H\otimes_{H_\sigma^\dagger}U_\sigma\FF_\sigma N\cong U\ind_{P_\sigma^\dagger}(\FF_\sigma N).
\]
It follows from Theorem \ref{Cabanes} that this is induced by an isomorphism of functors $\FF\circ(H\otimes_{H_\sigma^\dagger}(\cdot))\cong\ind_{P_\sigma^\dagger}^G\circ\FF_\sigma$ from $\Mod(H_\sigma^\dagger)$ to $\C(G)$.
\end{rem}

On the other hand, any representation $V\in\Rep^\infty_k(G)$ gives rise to a $G$-equivariant coefficient system $\underline{\underline{V}}$ as in \cite{OS14}, \S3.1. The corresponding oriented chain complex $\C_c^\mathrm{or}(\mathcal{X}_{(\bullet)},\underline{\underline{V}})$ will generally not consist of objects of $\C(G)$ as the following example shows.

\begin{ex}\label{not_C(G)}
Let $\sigma$ be a facet of $\mathcal{X}$ contained in the closure of $C$. Moreover, denote by $I_\sigma$ the pro-$p$ radical of the parahoric subgroup $P_\sigma$. Then $W=\ind_{P_\sigma^\dagger}^G(V^{I_\sigma})$ is a direct summand of one of the terms of the oriented chain complex of $\underline{\underline{V}}$. However, if the $P_\sigma^\dagger$-representation $V^{I_\sigma}$ is not generated by its $I$-invariants then neither is the $G$-representation $W$. Indeed, if $W'=\ind_{P_\sigma^\dagger}^G(k[P_\sigma^\dagger]\cdot V^I)$ then $W'$ is a proper subrepresentation of $W$. On the other hand, the inclusion $W'\hookrightarrow W$ induces an isomorphism on $I$-invariants as follows from \cite{Koh}, Proposition 4.17. Therefore,
\[
k[G]\cdot W^I=k[G]\cdot (W')^I= W'\subsetneqq W
\]
and $W$ does not lie in $\C(G)$ by Proposition \ref{characterization_C(G)}. For a concrete example consider the $\mathrm{GL}_2(\mathbb{Q}_p)$-representation $V$ described in \cite{OS14}, Remark 3.2 (3).
\end{ex}


Competing interests: The authors declare none.

\vspace{1cm}


Universit\"at Duisburg-Essen\\
Fakult\"at f\"ur Mathematik\\
Thea-Leymann-Stra{\ss}e 9\\
D--45127 Essen, Germany\\
{\it E-mail addresses:} {\ttfamily nicolas.dupre@uni-due.de}, {\ttfamily jan.kohlhaase@uni-due.de}


\end{document}